\begin{document}

\title{Affine matrix rank minimization problem via non-convex fraction function penalty}
%\subtitle{Do you have a subtitle?\\ If so, write it here}

%\titlerunning{Short form of title}        % if too long for running head

\author{Angang Cui$^{1}$\and
        Jigen Peng$^{\ast, 1}$\and
        Haiyang Li$^{2}$\and
        Chengyi Zhang$^{2}$\and
       Yongchao Yu$^{1}$%etc.
}

%\authorrunning{Short form of author list} % if too long for running head

\institute{$\ast$ Corresponding author\\
            Jigen Peng \at
              \email{jgpengxjtu@126.com}\\
           1 School of Mathematics and Statistics, Xi'an Jiaotong University, Xi'an, 710049, China \\
           2 School of Science, Xi'an Polytechnic University, Xi'an, 710048, China
}

\date{Received: date / Accepted: date}
% The correct dates will be entered by the editor

\maketitle

\begin{abstract}
Affine matrix rank minimization problem is a fundamental problem with a lot of important applications in many fields. It is well known that this problem is
combinatorial and NP-hard in general.
%Therefore, it is important to choose the suitable substitution for this matrix rank minimization problem.
In this paper, a continuous promoting low rank non-convex fraction function is studied to replace the rank function in this NP-hard problem. Inspired by our former work in
compressed sensing, an iterative singular value thresholding algorithm is proposed to solve the regularization transformed affine matrix rank minimization
problem. For different $a>0$, we could get a much better result by adjusting the different value of $a$, which is one of the advantages for the iterative
singular value thresholding algorithm compared with some state-of-art methods. Some convergence results are established and numerical experiments show that this
thresholding algorithm is feasible for solving the regularization transformed affine matrix rank minimization problem. Moreover, we proved that the value of the
regularization parameter $\lambda>0$ can not be chosen too large. Indeed, there exists $\bar{\lambda}>0$ such that the
optimal solution of the regularization transformed affine matrix rank minimization problem is equal to zero for any $\lambda>\bar{\lambda}$. Numerical experiments
on matrix completion problems show that our method performs powerful in finding a low-rank matrix and the numerical experiments about image inpainting problems
show that our algorithm has better performances than some state-of-art methods.

\keywords{Affine matrix rank minimization \and Low-rank \and Matrix completion \and Fraction function \and Iterative singular value thresholding algorithm \and Image inpainting}
%\PACS{PACS code1 \and PACS code2 \and more}
\subclass{90C26 \and 90C27 \and 90C59}
\end{abstract}

\section{Introduction}
\label{intro}
In recent years, affine matrix rank minimization problem (ARMP) has attracted much attention in many important application fileds such as collaborative
filtering in recommender systems [1,2], minimum order system and  low-dimensional Euclidean embedding in control theory [3,4], network localization [5],
and so on. ARMP can be viewed as the following mathematical form
\begin{equation}\label{r1}
(\mathrm{ARMP})\ \ \ \ \ \ \min_{X\in \mathcal{R}^{m\times n}} \ \mbox{rank}(X)\ \ s.t. \ \  \mathcal{A}(X)=b
\end{equation}
where the linear map $\mathcal{A}: \mathcal{R}^{m\times n}\mapsto \mathcal{R}^{d}$ and the vector $b\in \mathcal{R}^{d}$ are given. Without loss of generality,
we assume $m\leq n$. The matrix completion problem
\begin{equation}\label{r2}
\min_{X\in \mathcal{R}^{m\times n}} \ \mbox{rank}(X)\ \ s.t. \ \  X_{i,j}=M_{i,j},\ \ (i,j)\in \Omega
\end{equation}
is a special case of ARMP, where $X$ and $M$ are both $m\times n$ matrices and $\Omega$ is a subset of indexes set of all pairs $(i,j)$. If the projector
$\mathcal{P}_{\Omega}: \mathcal{R}^{m\times n}\rightarrow \mathcal{R}^{m\times n}$ is defined as
\begin{equation}\label{r3}
[\mathcal{P}_{\Omega}X]_{i,j}=\left\{
                         \begin{array}{ll}
                           X_{i,j}, & \hbox{if $(i,j)\in \Omega$;} \\
                           0, & \hbox{if $(i,j)\notin \Omega$,}
                         \end{array}
                       \right.
\end{equation}
and the resulting matrix is $X_{\Omega}=\mathcal{P}_{\Omega}X$, then the matrix completion problem can be reformulated as
\begin{equation}\label{r4}
\min_{X\in \mathcal{R}^{m\times n}} \ \mbox{rank}(X)\ \ s.t. \ \ \mathcal{P}_{\Omega}X=X_{\Omega}.
\end{equation}

Usually, ARMP could be transformed into the following regularization problem
\begin{equation}\label{r5}
\min_{X\in \mathcal{R}^{m\times n}} \Big\{\|\mathcal{A}(X)-b\|_{2}^{2}+\lambda\cdot\mathrm{rank}(X)\Big\}
\end{equation}
where $\lambda>0$, which is called the regularization parameter. Unfortunately, although $\mathrm{rank}(X)$ characterizes the rank
of the matrix $X$, and it is a challenging non-convex optimization problem and is known as NP-hard [6].

Nuclear-norm affine matrix rank minimization problem (NuARMP) is the most popular alternative [1,4,6-9], and the minimization has the following form
\begin{equation}\label{r6}
(\mathrm{NuARMP})\ \ \ \ \ \ \min_{X\in \mathcal{R}^{m\times n}} \ \|X\|_{\ast}\ \ s.t. \ \  \mathcal{A}(X)=b
\end{equation}
for the constrained problem and
\begin{equation}\label{r7}
(\mathrm{RNuARMP})\ \ \ \ \ \ \min_{X\in \mathcal{R}^{m\times n}} \Big\{\|\mathcal{A}(X)-b\|_{2}^{2}+\lambda\cdot\|X\|_{\ast}\Big\}
\end{equation}
for the regularization problem, where $\|X\|_{\ast}=\sum_{i=1}^{m}\sigma_{i}(X)$ is called nuclear-norm of matrix $X$, and
$\sigma_{i}(X)$ presents the $i$-th largest singular value of matrix $X$ arranged in descending order.

As the compact convex relaxation of the NP-hard ARMP, NuARMP possesses many theoretical and algorithmic advantages [10-13].
However, it may be suboptimal for recovering a real low-rank matrix. In fact, NuARMP or RNuARMP may yield a matrix with much
higher rank and need more observations to recover a real low-rank matrix [1,11]. Moreover, the nuclear-norm is a loose approximation
of the rank function and tends to lead to biased estimation by shrinking all the singular values toward to zero simultaneously, and sometimes
results in over-penalization in its regularization model as the $\l_{1}$-norm in compressed sensing [14].

With recent development of non-convex relaxation approach in sparse signal recovery problems, many researchers have shown that using a
continuous non-convex function to approximate the $\l_{0}$-norm is a better choice than using the $\l_{1}$-norm [15-26]. This brings our
attention back to the non-convex functions and we replace the rank function by a continuous promoting low-rank non-convex function.
Through this transformation, ARMP can be translated into a transformed ARMP (TrARMP) which has the following form
\begin{equation}\label{r8}
(\mathrm{TrARMP})\ \ \ \ \ \ \ \ \ \ \min_{X\in \mathcal{R}^{m\times n}} \ P(X)\ \ s.t. \ \  \mathcal{A}(X)=b
\end{equation}
for the constrained problem and
\begin{equation}\label{r9}
(\mathrm{RTrARMP})\ \ \ \ \ \ \min_{X\in \mathcal{R}^{m\times n}} \Big\{\|\mathcal{A}(X)-b\|_{2}^{2}+\lambda\cdot P(X)\Big\}
\end{equation}
for the regularization problem, where the continuous promoting low rank non-convex function $P(X)$ is in terms of singular values of matrix $X$,
typically, $P(X)=\sum_{i=1}^{m}\rho(\sigma_{i}(X))$.

In particular, if $\|\mathcal{A}(X)-b\|_{2}^{2}$ depends only on the diagonal entries of matrix $X$, RTrARMP reduces to a regularization vector
minimization problem (RVMP) which is based on non-convex function in the form of
\begin{equation}\label{r10}
(\mathrm{RVMP})\ \ \ \ \ \ \ \ \ \ \ \  \ \ \min_{x\in \mathcal{R}^{\l}} \Big\{\|Ax-b\|_{2}^{2}+\lambda\cdot P(x)\Big\}
\end{equation}
where $A\in \mathcal{R}^{d\times \l}$, $l=\min\{m,n\}$, and $P(x)=\sum_{i=1}^{l}\rho_{a}(x_{i})$ for any $x\in \mathcal{R}^{l}$.

In this paper, a continuous promoting low-rank non-convex function
\begin{equation}\label{r11}
P(X)=\sum_{i=1}^{m}\rho_{a}(\sigma_{i}(X))
\end{equation}
is studied to replace the $\mathrm{rank}(X)$ in RTrARMP, where the non-convex function
\begin{equation}\label{r12}
\rho_{a}(t)=\frac{a|t|}{a|t|+1}
\end{equation}
is called the non-convex fraction function, and the parameter $a\in(0,+\infty)$. It is easy to verify that $\rho_{a}(t)$ is increasing and concave in
$t\in(-\infty, +\infty)$, and

\begin{equation}\label{r13}
\lim_{a\rightarrow+\infty}\rho_{a}(t)=\left\{
    \begin{array}{ll}
      0, & {\ \ \mathrm{if} \ t=0;} \\
      1, & {\ \ \mathrm{if} \ t\neq 0.}
    \end{array}
  \right.
\end{equation}

The non-convex fraction function $\rho_{a}(t)$ is called "strictly non-interpolating" in [27], and widely used in image restoration. German
in [27] showed that the non-convex fraction function gave rise to a step-shaped estimate from ramp-shaped data. And in [28], Nikolova demonstrated that for
almost all data, the strongly homogeneous zones recovered by the non-convex fraction function were preserved constant under any small perturbation of the data.

Inspired by our former work in compressed sensing [29], an iterative singular value thresholding algorithm (ISVTA) is proposed to solve RTrARMP in this paper.
For different $a>0$, we could get a much better result by adjusting different values of parameter $a$, which is one of the advantages for ISVTA compared with
some state-of-art methods.

\begin{figure}
 \centering
% Use the relevant command to insert your figure file.
% For example, with the graphicx package use
 \includegraphics[width=0.75\textwidth]{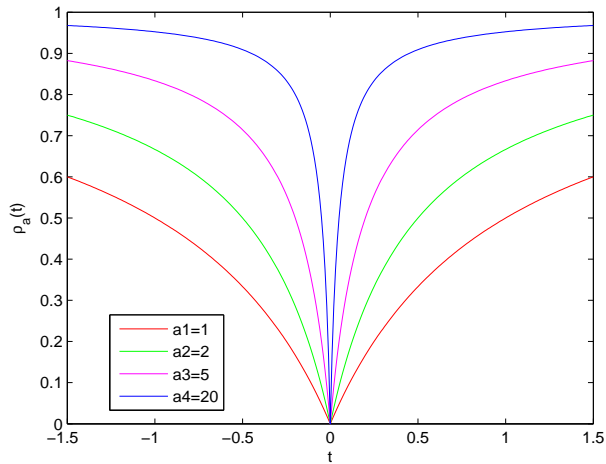}
%figure caption is below the figure
\caption{The behavior of the fraction function $\rho_{a}(t)$ for various values of $a$.}
\label{fig:1}       % Give a unique label
\end{figure}

The rest of this paper is organized as the following. In Section 2, we recall the iterative thresholding algorithm (ITA) of RVMP, and ISVTA is proposed
to solve RTrARMP in Section 3. In section 4, the convergence of ISVTA is established. In Section 5, we demonstrate some numerical experiments
on matrix completion problems and image inpainting problems, and numerical results show that ISVTA performs powerful in finding a low-rank matrix and the
numerical results for image inpainting problems show that our algorithm performs much better than some state-of-art methods. Some conclusion
remarks are presented in Section 6.

\section{Iterative thresholding algorithm (ITA) for solving RVMP} \label{prelimi-sec}

In this section, the iterative thresholding algorithm (ITA) of RVMP is recalled for all positive parameter $a>0$, which underlies the algorithm to be proposed.
Before the analytic expression of ITA, some crucial results need to be introduced for later use.
\begin{lemma}\label{le1} {\rm [29]}
Define three iterative thresholding values
$$t_{1}^{\ast}=\frac{\sqrt[3]{\frac{27}{8}\lambda a^{2}}-1}{a}, \ \ \ \ t_{2}^{\ast}=\frac{\lambda}{2}a, \ \ \ \ t_{3}^{\ast}=\sqrt{\lambda}-\frac{1}{2a}$$
for any positive parameters $\lambda$ and $a$, then the inequalities $t_{1}^{\ast}\leq t_{3}^{\ast}\leq t_{2}^{\ast}$ hold. Furthermore,
they are equal to $\frac{1}{2a}$ when $\lambda=\frac{1}{a^{2}}$.
\end{lemma}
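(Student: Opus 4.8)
The plan is to reduce the two claimed inequalities to elementary polynomial inequalities by a single scaling substitution. First I would multiply each of the three thresholds by $a>0$ and note that the results depend on $\lambda$ and $a$ only through the combination $s:=a\sqrt{\lambda}>0$; indeed $at_{1}^{\ast}=\frac{3}{2}s^{2/3}-1$ (using $\sqrt[3]{\frac{27}{8}\lambda a^{2}}=\frac{3}{2}(a\sqrt{\lambda})^{2/3}$ since $\frac{27}{8}=(\frac{3}{2})^{3}$), $at_{2}^{\ast}=\frac{1}{2}s^{2}$, and $at_{3}^{\ast}=s-\frac{1}{2}$. Since $a>0$, proving $t_{1}^{\ast}\le t_{3}^{\ast}\le t_{2}^{\ast}$ is equivalent to proving $\frac{3}{2}s^{2/3}-1\le s-\frac{1}{2}\le\frac{1}{2}s^{2}$ for every $s>0$; moreover the special case $\lambda=1/a^{2}$ corresponds exactly to $s=1$, at which all three expressions equal $\frac{1}{2}$, i.e. each threshold equals $\frac{1}{2a}$.

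For the right-hand inequality $s-\frac{1}{2}\le\frac{1}{2}s^{2}$ I would simply clear denominators to obtain $s^{2}-2s+1\ge 0$, that is $(s-1)^{2}\ge 0$, which holds with equality precisely at $s=1$.

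For the left-hand inequality $\frac{3}{2}s^{2/3}-1\le s-\frac{1}{2}$, equivalently $3s^{2/3}\le 2s+1$, I would substitute $r:=s^{1/3}>0$, turning it into $2r^{3}-3r^{2}+1\ge 0$. The key algebraic step is the factorization $2r^{3}-3r^{2}+1=(r-1)^{2}(2r+1)$, which is nonnegative for all $r>0$ and vanishes only at $r=1$. Chaining the two inequalities yields $t_{1}^{\ast}\le t_{3}^{\ast}\le t_{2}^{\ast}$, with all equalities simultaneously attained exactly when $s=1$, i.e. when $\lambda=1/a^{2}$.

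The main difficulty is essentially bookkeeping rather than depth: one must spot that the natural variable is $s=a\sqrt{\lambda}$, so that the statement becomes scale-free, and then remove the cube root through the secondary substitution $r=s^{1/3}$ so that everything becomes polynomial. After that the two factorizations $(s-1)^{2}$ and $(r-1)^{2}(2r+1)$ do all the work; the only point requiring care is checking that the equality cases of the two inequalities coincide, which they do, both occurring exactly at $s=1$.
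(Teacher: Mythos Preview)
Your argument is correct. The scaling substitution $s=a\sqrt{\lambda}$ is exactly the right normalization, and the two factorizations $(s-1)^{2}\ge 0$ and $(r-1)^{2}(2r+1)\ge 0$ (with $r=s^{1/3}$) cleanly dispose of the two inequalities, with the equality case $s=1$ coinciding in both and giving $t_{1}^{\ast}=t_{2}^{\ast}=t_{3}^{\ast}=\frac{1}{2a}$.

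As for comparison with the paper: there is nothing to compare against. Lemma~\ref{le1} is stated with a citation to reference~[29] and no proof is reproduced in the present paper. Your self-contained verification is therefore a genuine addition rather than a rederivation of anything in the text; it is also the natural way to prove the result, since the statement is visibly scale-invariant under $(\lambda,a)\mapsto(c^{-2}\lambda,ca)$, which is precisely what your substitution exploits.
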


Define a function of $\beta\in \mathcal{R}$ as
$$f_{\lambda}(\beta)=(\beta-\gamma)^{2}+\lambda\cdot\rho_{a}(\beta)$$
and
$$\beta^{\ast}=\arg\min_{\beta\in \mathcal{R}}f_{\lambda}(\beta).$$

\begin{lemma}\label{le2} {\rm [29]}
The optimal solution to $\beta^{\ast}=\displaystyle \arg\min_{\beta\in \mathcal{R}} f_{\lambda}(\beta)$ is the threshold function defined as
\begin{equation}\label{r14}
\beta^{\ast}=\left\{
    \begin{array}{ll}
      g_{\lambda}(\gamma), & \ \ \mathrm{if} \ {|\gamma|> t^{\ast};} \\
      0, & \ \ \mathrm{if} \ {|\gamma|\leq t^{\ast}.}
    \end{array}
  \right.
\end{equation}
where $g_{\lambda}(\gamma)$ is defined as
\begin{equation}\label{r15}
g_{\lambda}(\gamma)=sign(\gamma)\bigg(\frac{\frac{1+a|\gamma|}{3}(1+2\cos(\frac{\phi(\gamma)}{3}-\frac{\pi}{3}))-1}{a}\bigg),
\end{equation}
$$\phi(\gamma)=\arccos\Big(\frac{27\lambda a^{2}}{4(1+a|\gamma|)^{3}}-1\Big),$$
and the threshold value satisfies
\begin{equation}\label{r16}
t^{\ast}=\left\{
    \begin{array}{ll}
      t_{2}^{\ast}, & \ \ \mathrm{if} \ {\lambda\leq \frac{1}{a^{2}};} \\
      t_{3}^{\ast}, & \ \ \mathrm{if} \ {\lambda>\frac{1}{a^{2}}.}
    \end{array}
  \right.
\end{equation}
\end{lemma}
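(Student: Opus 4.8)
The plan is to reduce this scalar problem to an elementary analysis of a cubic. Since $\rho_{a}$ is even and $(\beta-\gamma)^{2}$ is invariant under $(\beta,\gamma)\mapsto(-\beta,-\gamma)$, it suffices to treat $\gamma>0$: the case $\gamma=0$ is immediate because $f_{\lambda}(\beta)\ge f_{\lambda}(0)$, and the factor $\mathrm{sign}(\gamma)$ in $g_{\lambda}$ transports the conclusion to $\gamma<0$. For $\gamma>0$ a one-line reflection estimate ($(-x-\gamma)^{2}>(x-\gamma)^{2}$ whenever $x,\gamma>0$, while $\rho_{a}$ is unchanged) shows that the minimizer $\beta^{\ast}$ is nonnegative, so $f_{\lambda}$ need only be minimized over $[0,\infty)$, where it is smooth and coercive. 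On this half-line $f_{\lambda}'(\beta)=2(\beta-\gamma)+\lambda a/(1+a\beta)^{2}$; multiplying by the positive quantity $(1+a\beta)^{2}$ and substituting $u=1+a\beta\ge 1$, $c=1+a\gamma$, the sign of $f_{\lambda}'(\beta)$ equals the sign of $h(u):=u^{3}-cu^{2}+\frac{\lambda a^{2}}{2}$. The behaviour of $f_{\lambda}$ on $[0,\infty)$ is therefore read off from $h$ on $[1,\infty)$, and $h$ has a local maximum at $u=0$ of height $\frac{\lambda a^{2}}{2}>0$ and a local minimum at $u=\frac{2c}{3}$ of height $\frac{\lambda a^{2}}{2}-\frac{4c^{3}}{27}$.

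Next I would split on the sign of this minimum height. If it is positive, which written out is precisely $\gamma<t_{1}^{\ast}$, then $h>0$ throughout $(0,\infty)$, so $f_{\lambda}$ is strictly increasing on $[0,\infty)$ and $\beta^{\ast}=0$; this is consistent with the asserted threshold since $t_{1}^{\ast}\le t^{\ast}$ by Lemma~\ref{le1}. If $\gamma\ge t_{1}^{\ast}$, then $h$ has three real roots $r_{0}<0<r_{1}\le\frac{2c}{3}\le r_{2}<c$, and I would pin down the largest of them: the shift $u=v+c/3$ depresses the cubic to $v^{3}-\frac{c^{2}}{3}v+q=0$, its solution in trigonometric form together with $\arccos(-x)=\pi-\arccos(x)$ yields $r_{2}=\frac{c}{3}\bigl(1+2\cos(\frac{\phi(\gamma)}{3}-\frac{\pi}{3})\bigr)$, while the remaining two branches fall in $[0,\frac{2c}{3}]$ and $[-\frac{c}{3},0]$, leaving no ambiguity. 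Hence the only positive stationary point of $f_{\lambda}$ that can be a local minimizer is $g_{\lambda}(\gamma)=\frac{r_{2}-1}{a}$ (present exactly when $r_{2}>1$), so in all cases $\beta^{\ast}\in\{0,g_{\lambda}(\gamma)\}$ and it remains to decide, as a function of $\gamma$, which of the two wins.

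This last step is the real obstacle, and I would handle it by cases in $\lambda$ versus $1/a^{2}$. If $\lambda\le 1/a^{2}$: a short computation gives $h(1)=a(t_{2}^{\ast}-\gamma)$, and on $[t_{1}^{\ast},t_{2}^{\ast})$ one also has $\gamma<\frac{1}{2a}$, hence $\frac{2c}{3}<1$ and therefore $r_{2}\le 1$; so $f_{\lambda}$ is nondecreasing on $[0,\infty)$ for $\gamma\le t_{2}^{\ast}$, giving $\beta^{\ast}=0$ (and indeed $g_{\lambda}(t_{2}^{\ast})=0$), whereas for $\gamma>t_{2}^{\ast}$ one has $r_{1}<1<r_{2}$, so $f_{\lambda}$ strictly decreases and then increases and $\beta^{\ast}=g_{\lambda}(\gamma)$; thus $t^{\ast}=t_{2}^{\ast}$. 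If $\lambda>1/a^{2}$: for $\gamma>t_{1}^{\ast}$ the point $g_{\lambda}(\gamma)>0$ is a genuine local minimizer competing with $0$, so I compare the two values through $D(\gamma):=\gamma^{2}-f_{\lambda}(g_{\lambda}(\gamma))$; the envelope identity $\frac{d}{d\gamma}f_{\lambda}(g_{\lambda}(\gamma))=2(\gamma-g_{\lambda}(\gamma))$ gives $D'(\gamma)=2g_{\lambda}(\gamma)>0$ on $(t_{1}^{\ast},\infty)$, so $D$ is strictly increasing there, and solving the equal-value equation $f_{\lambda}(g_{\lambda}(\gamma))=\gamma^{2}$ simultaneously with the stationarity relation $2(\gamma-\beta_{\ast})(1+a\beta_{\ast})^{2}=\lambda a$ (divide the first by $\beta_{\ast}$, subtract, and watch the $\frac{\lambda a^{2}}{2}$ collapse to $(1+a\beta_{\ast})^{2}=\lambda a^{2}$, i.e.\ $\beta_{\ast}=\sqrt{\lambda}-\frac{1}{a}$) forces $\gamma=\sqrt{\lambda}-\frac{1}{2a}=t_{3}^{\ast}$. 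Since $t_{1}^{\ast}<t_{3}^{\ast}\le t_{2}^{\ast}$ by Lemma~\ref{le1}, monotonicity of $D$ then yields $\beta^{\ast}=0$ for $\gamma\le t_{3}^{\ast}$ and $\beta^{\ast}=g_{\lambda}(\gamma)$ for $\gamma>t_{3}^{\ast}$, i.e.\ $t^{\ast}=t_{3}^{\ast}$; at the single equality point $\gamma=t_{3}^{\ast}$ both $0$ and $g_{\lambda}(\gamma)$ minimize and the threshold function returns $0$ by convention.

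In summary, the derivative computation and the Cardano/trigonometric reduction are routine, but extracting the threshold --- matching the crossover to $t_{2}^{\ast}$ respectively $t_{3}^{\ast}$ --- is where the genuine work lies, the cleanest instruments being the strict monotonicity of $D$ via the envelope theorem together with the explicit solution of the ``Maxwell'' equal-value condition, all while keeping careful track of which of the three roots of the cubic is the operative one.
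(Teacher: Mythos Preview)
The paper does not prove Lemma~\ref{le2}; it is quoted from reference~[29] without argument, so there is no in-paper proof to compare against. Judged on its own, your argument is correct and essentially complete: the reduction to $\gamma\ge 0$, the cubic substitution $u=1+a\beta$, the identification of $g_{\lambda}(\gamma)$ with the largest root via the trigonometric Cardano formula, and the two-case threshold analysis all check out. In particular, your envelope computation $D'(\gamma)=2g_{\lambda}(\gamma)$ together with the explicit solution of the equal-value condition (yielding $\beta_{\ast}=\sqrt{\lambda}-\tfrac{1}{a}$ and hence $\gamma=t_{3}^{\ast}$) is a clean way to locate the crossover in the regime $\lambda>1/a^{2}$, and the observation $h(1)=a(t_{2}^{\ast}-\gamma)$ combined with $2c/3<1$ handles the other regime efficiently.

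Two minor points you may want to make explicit when writing this up. First, in the case $\lambda>1/a^{2}$ you should note that $r_{2}>1$ (equivalently $g_{\lambda}(\gamma)>0$) holds for \emph{all} $\gamma>t_{1}^{\ast}$, not just near $t_{1}^{\ast}$; this follows because at $\gamma=t_{1}^{\ast}$ one has $r_{2}=2c/3>1$ (precisely since $\lambda>1/a^{2}$ forces $t_{1}^{\ast}>\tfrac{1}{2a}$) and $r_{2}$ is increasing in $c$. Second, the envelope step uses smoothness of $g_{\lambda}$ on $(t_{1}^{\ast},\infty)$, which holds by the implicit function theorem because $r_{2}$ is a simple root there; it degenerates only at the endpoint $\gamma=t_{1}^{\ast}$, where your separate argument already gives $D(t_{1}^{\ast})<0$.
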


\begin{figure}
  \begin{minipage}[t]{0.5\linewidth}
  \centering
  \includegraphics[width=1\textwidth]{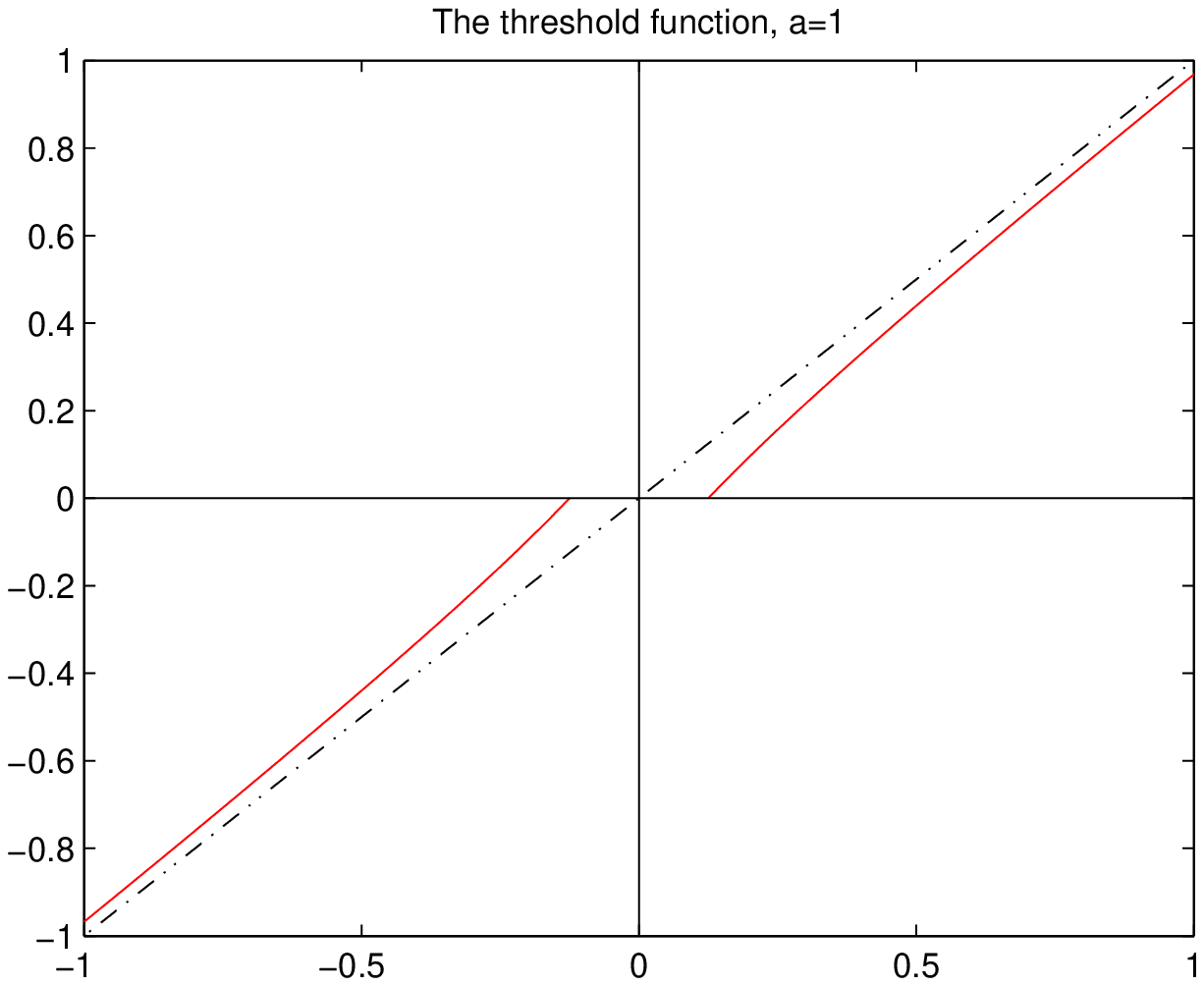}
  \end{minipage}
  \begin{minipage}[t]{0.5\linewidth}
  \centering
  \includegraphics[width=1\textwidth]{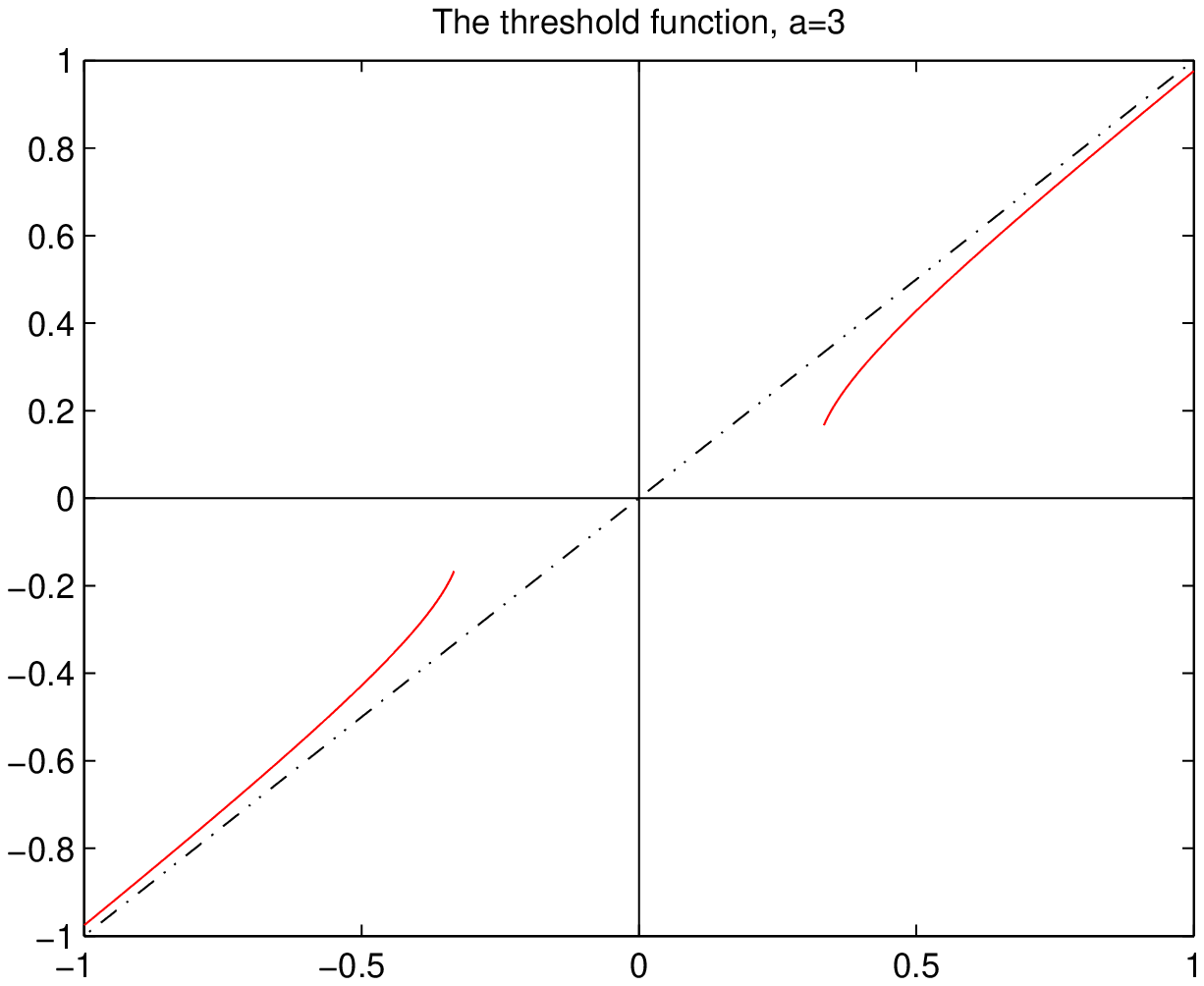}
  \end{minipage}
  \begin{minipage}[t]{0.5\linewidth}
  \centering
  \includegraphics[width=1\textwidth]{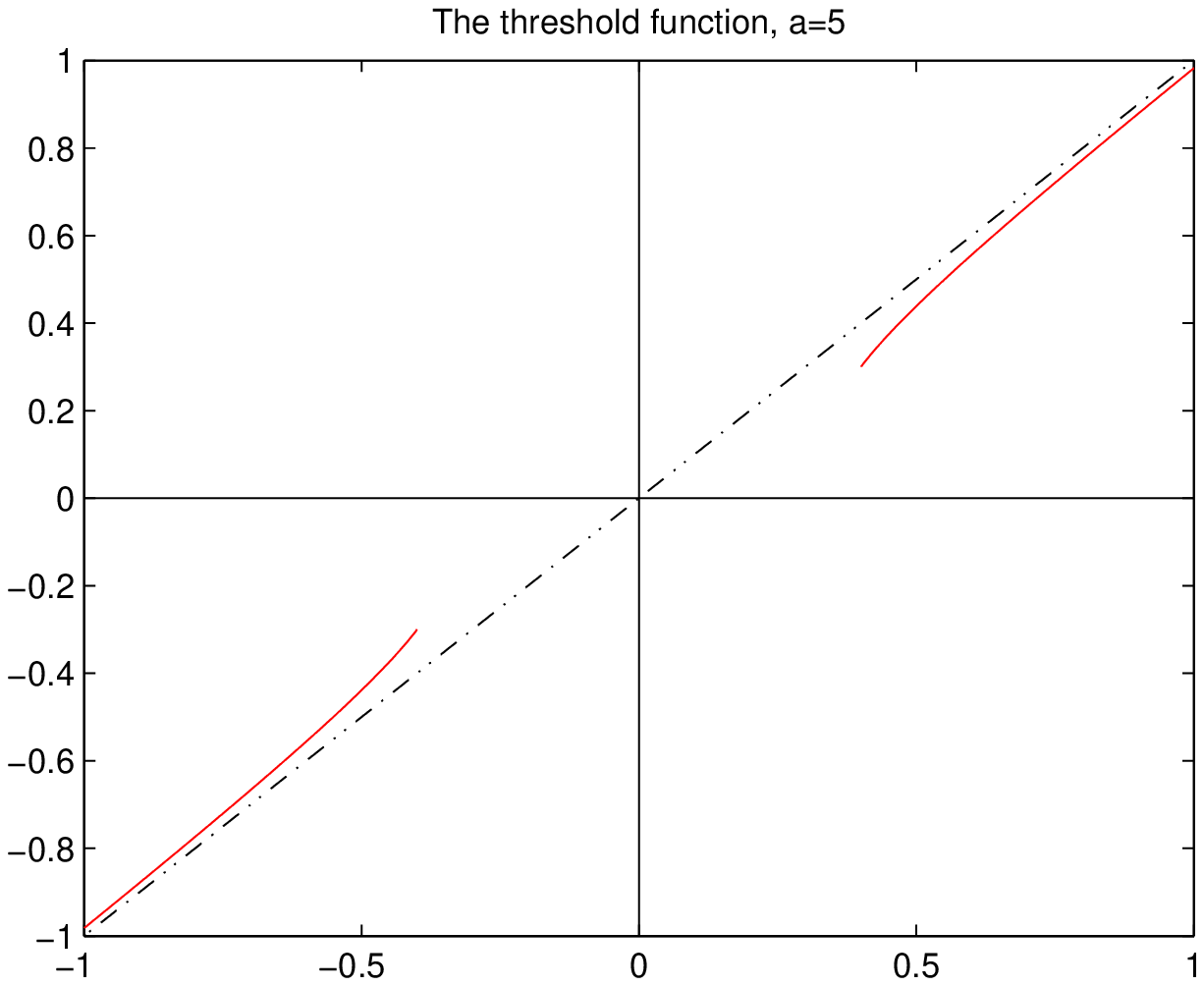}
  \end{minipage}
  \begin{minipage}[t]{0.5\linewidth}
  \centering
  \includegraphics[width=1\textwidth]{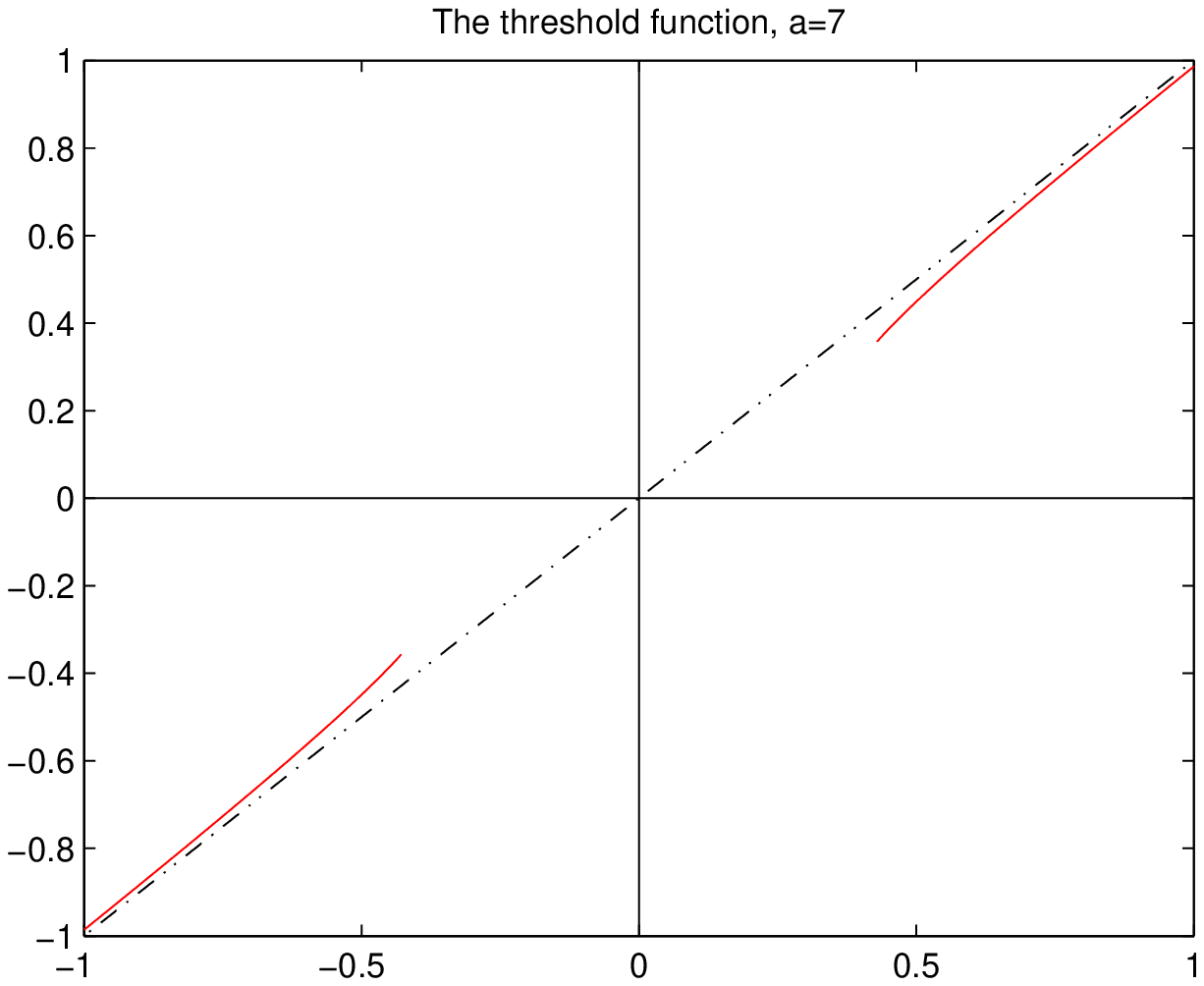}
  \end{minipage}
  \begin{minipage}[t]{0.5\linewidth}
  \centering
  \includegraphics[width=1\textwidth]{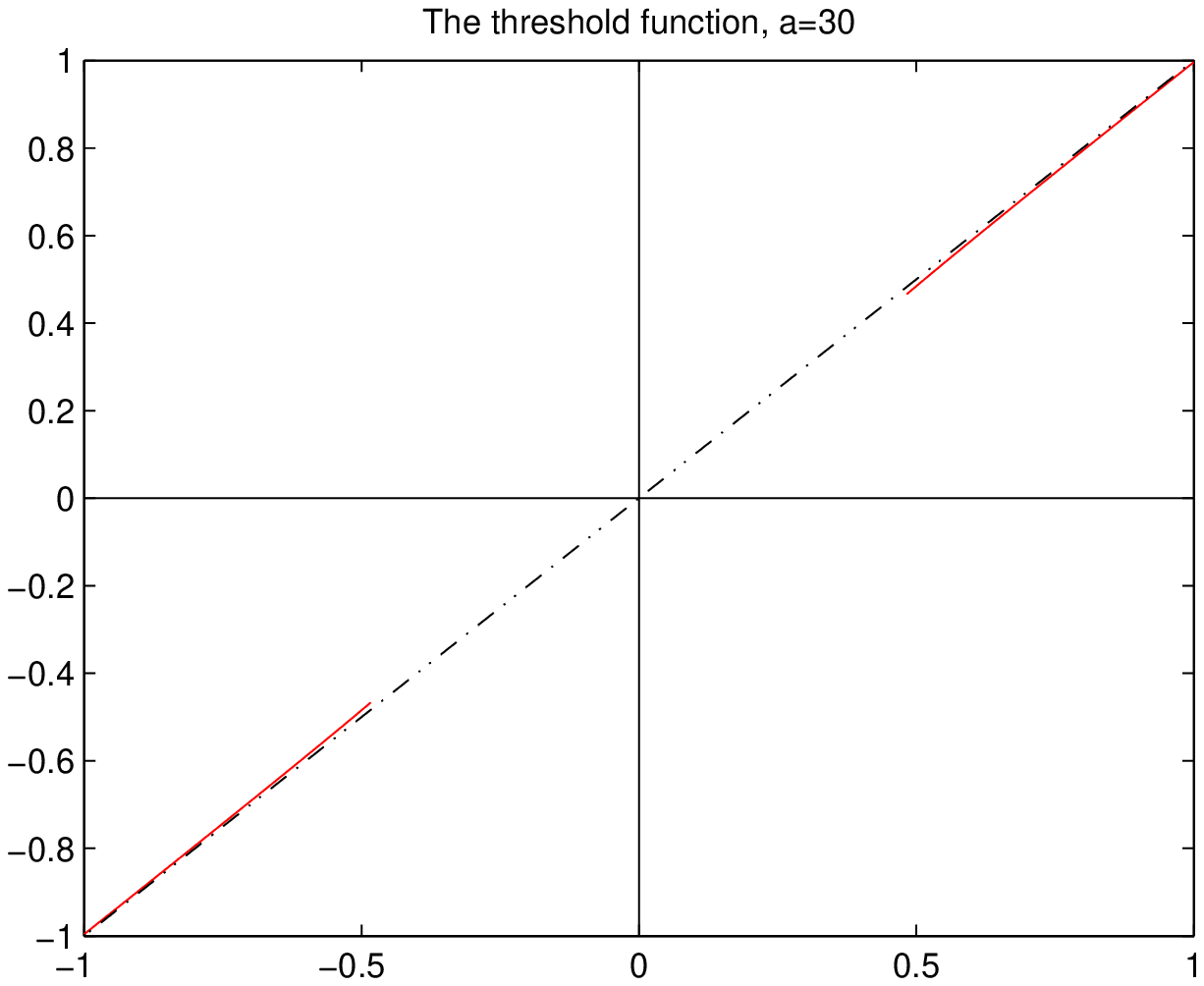}
  \end{minipage}
  \begin{minipage}[t]{0.5\linewidth}
  \centering
  \includegraphics[width=1\textwidth]{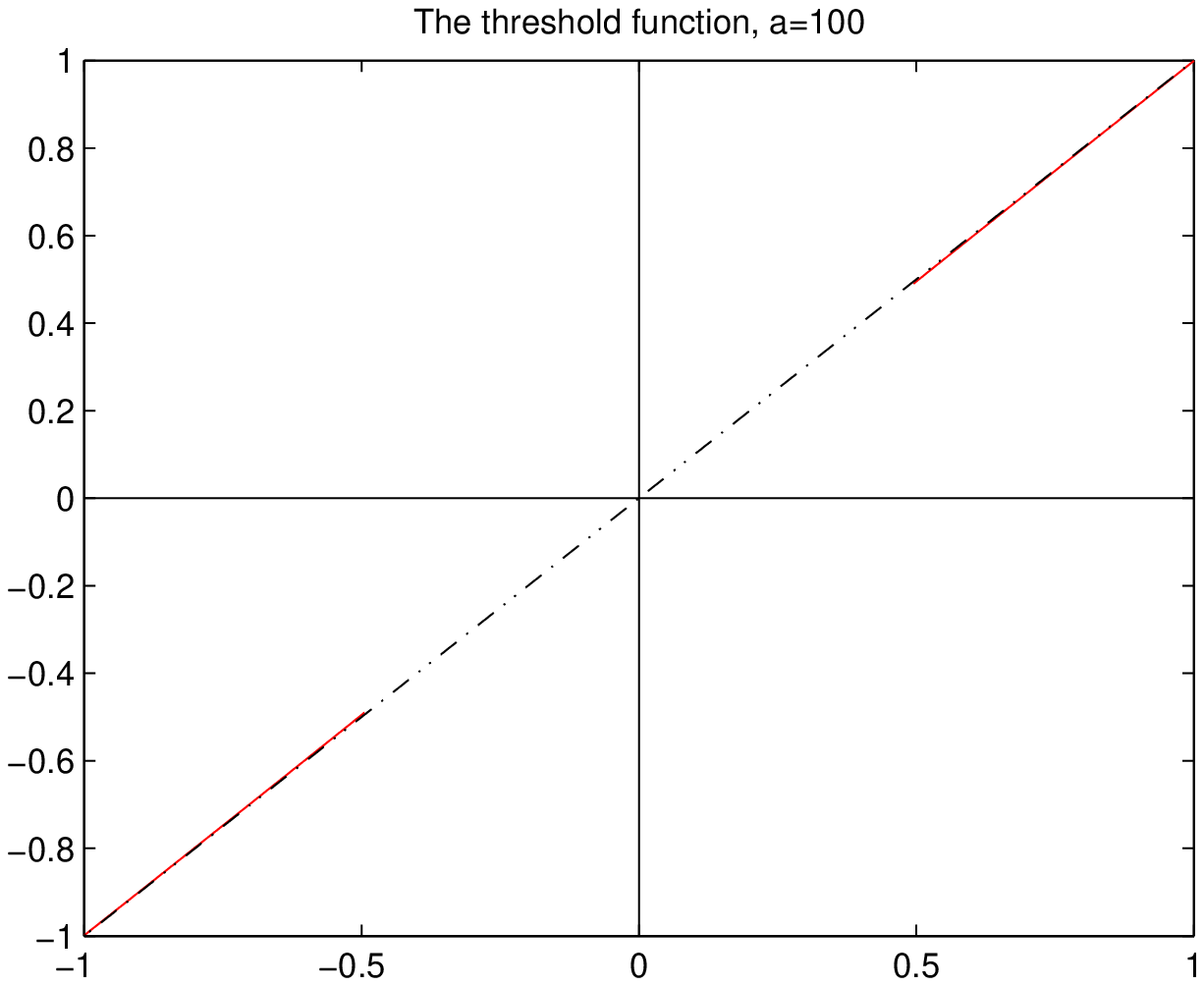}
  \end{minipage}
  \caption{The plots of the threshold functions for a=1, 3, 5, 7, 30, 100, and $\lambda=0.25$.} \label{fig:2}
\end{figure}

\begin{lemma}\label{le3}
The optimal solution $\beta^{\ast}$ defined in Lemma 2 is monotone.
\end{lemma}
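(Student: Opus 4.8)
The plan is to bypass any attempt to differentiate the explicit formula \eqref{r15}: the derivative of $g_{\lambda}$ involves the chain rule through $\phi(\gamma)$ and a cosine, and verifying its sign together with the sign of the jump at the threshold $t^{\ast}$ would be an unpleasant computation. Instead I would work directly from the variational characterization. Writing $f_{\lambda}(\beta;\gamma)=(\beta-\gamma)^{2}+\lambda\cdot\rho_{a}(\beta)$ to display the dependence on $\gamma$, and letting $\beta^{\ast}(\gamma)$ be the minimizer singled out in Lemma~\ref{le2}, the claim is that $\gamma_{1}<\gamma_{2}$ forces $\beta^{\ast}(\gamma_{1})\leq\beta^{\ast}(\gamma_{2})$, i.e.\ that the selection $\gamma\mapsto\beta^{\ast}(\gamma)$ is nondecreasing.

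First I would write down the two optimality inequalities coming from the definition of $\beta^{\ast}(\gamma_{1})$ and $\beta^{\ast}(\gamma_{2})$ as global minimizers,
\[
f_{\lambda}(\beta^{\ast}(\gamma_{1});\gamma_{1})\leq f_{\lambda}(\beta^{\ast}(\gamma_{2});\gamma_{1}),\qquad f_{\lambda}(\beta^{\ast}(\gamma_{2});\gamma_{2})\leq f_{\lambda}(\beta^{\ast}(\gamma_{1});\gamma_{2}).
\]
Adding the two inequalities, the penalty contributions $\lambda\cdot\rho_{a}(\beta^{\ast}(\gamma_{1}))$ and $\lambda\cdot\rho_{a}(\beta^{\ast}(\gamma_{2}))$ occur on both sides and cancel, precisely because $\rho_{a}$ does not involve $\gamma$; what remains is the purely quadratic inequality
\[
(\beta^{\ast}(\gamma_{1})-\gamma_{1})^{2}+(\beta^{\ast}(\gamma_{2})-\gamma_{2})^{2}\leq(\beta^{\ast}(\gamma_{2})-\gamma_{1})^{2}+(\beta^{\ast}(\gamma_{1})-\gamma_{2})^{2}.
\]
Expanding and simplifying (the pure square terms cancel), this collapses to $2(\gamma_{2}-\gamma_{1})\bigl(\beta^{\ast}(\gamma_{1})-\beta^{\ast}(\gamma_{2})\bigr)\leq 0$, and since $\gamma_{2}-\gamma_{1}>0$ we conclude $\beta^{\ast}(\gamma_{1})\leq\beta^{\ast}(\gamma_{2})$. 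This is the asserted monotonicity.

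Two points deserve a remark in the write-up. The possible non-uniqueness of the minimizer of $f_{\lambda}$ --- which occurs only at $|\gamma|=t^{\ast}$ --- is harmless: Lemma~\ref{le2} fixes one global minimizer at every $\gamma$, and the rearrangement argument above uses nothing about $\beta^{\ast}(\gamma_{1}),\beta^{\ast}(\gamma_{2})$ beyond their both being global minimizers, so it applies to the chosen branch verbatim. It is also worth noting that the same inequality reproduces the qualitative picture of Figure~\ref{fig:2}: on $|\gamma|\leq t^{\ast}$ the selected value is the constant $0$, for $|\gamma|>t^{\ast}$ it equals the odd function $g_{\lambda}(\gamma)$, and monotonicity forces $g_{\lambda}$ to be nondecreasing on $(t^{\ast},+\infty)$ and nonnegative there, so no downward jump occurs across the threshold. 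I do not expect a genuine obstacle here; the only thing to be careful about is the non-uniqueness at the threshold, and the real ``difficulty'' is just resisting the temptation to differentiate \eqref{r15} directly.
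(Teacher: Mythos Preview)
Your argument is correct and is essentially identical to the paper's own proof: both write the two optimality inequalities for $\beta^{\ast}(\gamma_{1})$ and $\beta^{\ast}(\gamma_{2})$, add them so that the $\rho_{a}$ terms cancel, and simplify the remaining quadratic inequality to $(\beta^{\ast}(\gamma_{1})-\beta^{\ast}(\gamma_{2}))(\gamma_{1}-\gamma_{2})\geq 0$. Your additional remarks on non-uniqueness at the threshold and on the qualitative picture go slightly beyond what the paper records, but the core proof is the same.
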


\begin{proof}
For different $\gamma_{1}$ and $\gamma_{2}$, let
$$\beta_{1}\in\displaystyle\arg\min_{\beta\in \mathcal{R}}\Big\{(\beta-\gamma_{1})^{2}+\lambda\cdot\rho_{a}(\beta)\Big\},$$
$$\beta_{2}\in\displaystyle\arg\min_{\beta\in \mathcal{R}}\Big\{(\beta-\gamma_{2})^{2}+\lambda\cdot\rho_{a}(\beta)\Big\}.$$
Then, we have
$$(\beta_{2}-\gamma_{1})^{2}+\lambda\cdot \rho_{a}(\beta_{2})\geq(\beta_{1}-\gamma_{1})^{2}+\lambda\cdot \rho_{a}(\beta_{1}),$$
$$(\beta_{1}-\gamma_{2})^{2}+\lambda\cdot \rho_{a}(\beta_{1})\geq(\beta_{2}-\gamma_{2})^{2}+\lambda\cdot \rho_{a}(\beta_{2}).$$
By adding them, we have
$$(\beta_{2}-\gamma_{1})^{2}+(\beta_{1}-\gamma_{2})^{2}\geq(\beta_{1}-\gamma_{1})^{2}+(\beta_{2}-\gamma_{2})^{2}$$
and
$$(\beta_{1}-\beta_{2})(\gamma_{1}-\gamma_{2})\geq0.$$
This completes the proof.
\end{proof}

\begin{definition}\label{de1}
The iterative thresholding operator $R_{\lambda, P}$ is a diagonally nonlinear analytically expressive operator, and can be specified by
\begin{equation}\label{r17}
R_{\lambda, P}(x)=(g_{\lambda}(x_{1}), \cdots, g_{\lambda}(x_{l}))^{T},
\end{equation}
where $g_{\lambda}(x_{i})$ is defined in Lemma 2.
\end{definition}

Nextly, we will show that the optimal solution to RVMP can be expressed as an iterative thresholding operation.

For any $\lambda$, $\mu\in(0,+\infty)$ and $z\in \mathcal{R}^{\l}$, let
\begin{equation}\label{r18}
C_{\lambda}(x)=\|Ax-b\|_{2}^{2}+\lambda P(x),
\end{equation}
\begin{equation}\label{r19}
C_{\mu}(x, z)=\mu\Big[C_{\lambda}(x)-\|Ax-Az\|_{2}^{2}\Big]+\|x-z\|_{2}^{2}
\end{equation}
and
$$B_{\mu}(x)=x+\mu A^{T}(b-Ax).$$
Clearly, $C_{\mu}(x, x)=\mu C_{\lambda}(x)$.
\begin{theorem}\label{th1}{\rm [29]}
For positive parameters $\mu$, $a$ and $\lambda$, if $x^{s}=(x_{1}^{s}, x_{2}^{s},\cdots, x_{\l}^{s})^{T}$ is a local optimal solution
to $\displaystyle\min_{x\in \mathcal{R}^{n}}C_{\mu}(x,z)$, then
$$x_{i}^{s}=0\Leftrightarrow |(B_{\mu}(z))_{i}|\leq t^{\ast}$$
and
$$x_{i}^{s}=g_{\lambda\mu}((B_{\mu}(z))_{i})\Leftrightarrow |(B_{\mu}(z))_{i}|>t^{\ast}$$
where parameter $t^{\ast}$ is defined in (16) and $g_{\lambda\mu}$ is obtained by replacing $\lambda$ with $\lambda\mu$ in $g_{\lambda}$.
\end{theorem}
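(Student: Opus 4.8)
The plan is to reduce the minimization of $C_{\mu}(\cdot,z)$ to $l$ independent one–dimensional problems of exactly the type treated in Lemma~\ref{le2}, and then to read off the two equivalences coordinate by coordinate. The surrogate functional (\ref{r19}) is engineered precisely so that the coupling between coordinates induced by $A$ disappears, and this is what the argument will exploit.

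First I would carry out the algebraic reduction. Substituting $C_{\lambda}(x)=\|Ax-b\|_{2}^{2}+\lambda P(x)$ into (\ref{r19}) and expanding the three quadratics $\|Ax-b\|_{2}^{2}$, $\|Ax-Az\|_{2}^{2}$ and $\|x-z\|_{2}^{2}$, the two $\|Ax\|_{2}^{2}$ contributions cancel, so the only quadratic-in-$x$ term that survives is $\|x\|_{2}^{2}$; collecting the terms linear in $x$ produces exactly $-2\langle x,\,z+\mu A^{T}(b-Az)\rangle=-2\langle x,\,B_{\mu}(z)\rangle$. Completing the square then gives
$$C_{\mu}(x,z)=\|x-B_{\mu}(z)\|_{2}^{2}+\lambda\mu\,P(x)+\Big(\mu\|b\|_{2}^{2}-\mu\|Az\|_{2}^{2}+\|z\|_{2}^{2}-\|B_{\mu}(z)\|_{2}^{2}\Big),$$
where the bracketed quantity is independent of $x$. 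Since $P(x)=\sum_{i=1}^{l}\rho_{a}(x_{i})$, this is the \emph{separable} expression
$$C_{\mu}(x,z)=\sum_{i=1}^{l}\Big[(x_{i}-(B_{\mu}(z))_{i})^{2}+\lambda\mu\,\rho_{a}(x_{i})\Big]+\mathrm{const}(z).$$

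Next I would minimize coordinate by coordinate. Because $C_{\mu}(\cdot,z)$ is a sum of functions each depending on a single coordinate, if $x^{s}$ is a local minimizer then each component $x_{i}^{s}$ is a local minimizer of $\beta\mapsto(\beta-\gamma_{i})^{2}+\lambda\mu\,\rho_{a}(\beta)$ with $\gamma_{i}:=(B_{\mu}(z))_{i}$, i.e.\ of $f_{\lambda}$ with $\lambda$ replaced by $\lambda\mu$ and $\gamma$ replaced by $\gamma_{i}$. Applying Lemma~\ref{le2} with the parameter pair $(\lambda\mu,a)$, the relevant threshold is $t^{\ast}$ as in (\ref{r16}) (with $\lambda\mu$ in place of $\lambda$), and the minimizer equals $0$ when $|\gamma_{i}|\le t^{\ast}$ and $g_{\lambda\mu}(\gamma_{i})$ when $|\gamma_{i}|>t^{\ast}$. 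Reading this off for $i=1,\dots,l$ produces $x_{i}^{s}=0\Leftrightarrow|(B_{\mu}(z))_{i}|\le t^{\ast}$ and $x_{i}^{s}=g_{\lambda\mu}((B_{\mu}(z))_{i})\Leftrightarrow|(B_{\mu}(z))_{i}|>t^{\ast}$.

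The main obstacle is the algebraic identity of the first step: one must spot that (\ref{r19}) was built precisely so that the $\|Ax\|_{2}^{2}$ terms cancel — this is what decouples the coordinates and makes Lemma~\ref{le2} applicable; everything afterwards is bookkeeping plus a direct appeal to that lemma. The only other point needing care is the passage from ``$x^{s}$ a local minimizer of $C_{\mu}(\cdot,z)$'' to ``$x_{i}^{s}$ a local minimizer of the $i$-th summand'', which is legitimate because varying a single coordinate of $x^{s}$ is an admissible local perturbation; once there, the characterization in Lemma~\ref{le2}, applied with $\lambda\mu$ and $\gamma_{i}=(B_{\mu}(z))_{i}$, furnishes the two claimed equivalences directly.
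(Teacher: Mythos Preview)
Your approach is correct and is precisely the intended one. The paper itself does not supply a proof of this theorem --- it is quoted verbatim from reference~[29] --- but the very same reduction you carry out is exactly what the paper performs in Theorem~\ref{th5} for the matrix analogue: there the authors rewrite $C_{\mu}(X,Z)$ as $\|X-B_{\mu}(Z)\|_{F}^{2}+\lambda\mu P(X)$ plus a constant in $X$, which is the Frobenius version of your identity, and then reduce to Lemma~\ref{le2} coordinatewise (singular-value-wise, in that setting). So your two-step plan --- (i) expand and complete the square to kill the $\|Ax\|_{2}^{2}$ cross-terms and obtain the separable form, (ii) invoke Lemma~\ref{le2} with parameter $\lambda\mu$ on each coordinate --- matches the paper's own methodology.

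One small point worth tightening: the statement speaks of a \emph{local} optimal solution $x^{s}$, while Lemma~\ref{le2} as stated characterizes the \emph{global} minimizer of the one-dimensional problem. Your passage ``$x^{s}$ local minimizer of the sum $\Rightarrow$ $x_{i}^{s}$ local minimizer of the $i$-th summand'' is fine, but to close the loop you are implicitly using that for the scalar problem $(\beta-\gamma)^{2}+\lambda\mu\,\rho_{a}(\beta)$ every local minimizer is already of the form described in Lemma~\ref{le2} (either $0$ or $g_{\lambda\mu}(\gamma)$, according to the threshold). This is indeed how the result in~[29] is obtained, so it is not a gap in your argument so much as a place where the appeal to Lemma~\ref{le2} should be read as an appeal to its full proof rather than only its statement.
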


\begin{theorem}\label{th2}{\rm [29]}
If $x^{\ast}=(x_{1}^{\ast}, x_{2}^{\ast}, \cdots, x_{l}^{\ast})^{T}$ is an optimal solution to RVMP and $0<\mu<\|A\|_{2}^{-2}$,
then the optimal solution $x^{\ast}$ satisfies the fixed point equation
\begin{equation}\label{r20}
x_{i}^{\ast}=\left\{
    \begin{array}{ll}
      g_{\lambda\mu}([B_{\mu}(x^{\ast})]_{i}), &\ \ \mathrm{if} \ {|[B_{\mu}(x^{\ast})]_{i}|>t^{\ast};} \\
      0, &\ \ \mathrm{if} \ {|[B_{\mu}(x^{\ast})]_{i}|\leq t^{\ast},}
    \end{array}
  \right.
\end{equation}
where
\begin{equation}\label{r21}
\begin{array}{llll}
t^{\ast}=\left\{
    \begin{array}{ll}
      t_{2}^{\ast}=\frac{\lambda\mu}{2}a, &\ \ \mathrm{if} \ {\lambda\leq\frac{1}{a^{2}\mu};} \\
      t_{3}^{\ast}=\sqrt{\lambda\mu}-\frac{1}{2a}, &\ \ \mathrm{if} \ {\lambda>\frac{1}{a^{2}\mu}.}
    \end{array}
  \right.
\end{array}
\end{equation}
\end{theorem}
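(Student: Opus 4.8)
The plan is to deduce Theorem~\ref{th2} from Theorem~\ref{th1}. The key observation is that whenever $x^{\ast}$ is a global minimizer of $C_{\lambda}$, it is automatically a minimizer of the surrogate functional $x\mapsto C_{\mu}(x,x^{\ast})$; once this is known, the fixed-point description of the surrogate minimizer supplied by Theorem~\ref{th1} at $z=x^{\ast}$ yields (\ref{r20}) immediately, and the threshold (\ref{r21}) is obtained by rescaling the threshold rule (\ref{r16}).

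First I would record the identity that follows at once from the definition (\ref{r19}),
$$C_{\mu}(x,z)-\mu C_{\lambda}(x)=\|x-z\|_{2}^{2}-\mu\|Ax-Az\|_{2}^{2},$$
valid for all $x,z\in\mathcal{R}^{l}$. Since $\|Ax-Az\|_{2}^{2}\leq\|A\|_{2}^{2}\|x-z\|_{2}^{2}$, the right-hand side is bounded below by $(1-\mu\|A\|_{2}^{2})\|x-z\|_{2}^{2}$, which is nonnegative because $0<\mu<\|A\|_{2}^{-2}$, and vanishes at $x=z$. Hence $C_{\mu}(x,z)\geq\mu C_{\lambda}(x)$ for every $x$, with equality at $x=z$ (recall $C_{\mu}(z,z)=\mu C_{\lambda}(z)$). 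Taking $z=x^{\ast}$ and invoking the optimality of $x^{\ast}$ for RVMP gives, for every $x\in\mathcal{R}^{l}$,
$$C_{\mu}(x,x^{\ast})\geq\mu C_{\lambda}(x)\geq\mu C_{\lambda}(x^{\ast})=C_{\mu}(x^{\ast},x^{\ast}),$$
so $x^{\ast}$ is a global, hence local, minimizer of $x\mapsto C_{\mu}(x,x^{\ast})$. Applying Theorem~\ref{th1} with $z=x^{\ast}$ then gives, coordinate by coordinate, $x_{i}^{\ast}=0\Leftrightarrow|[B_{\mu}(x^{\ast})]_{i}|\leq t^{\ast}$ and $x_{i}^{\ast}=g_{\lambda\mu}([B_{\mu}(x^{\ast})]_{i})\Leftrightarrow|[B_{\mu}(x^{\ast})]_{i}|>t^{\ast}$, which is (\ref{r20}). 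Finally, to obtain (\ref{r21}) I substitute $\lambda\mu$ for $\lambda$ in (\ref{r16}) and in Lemma~\ref{le1}: the case $\lambda\mu\leq1/a^{2}$ becomes $\lambda\leq1/(a^{2}\mu)$ and gives $t^{\ast}=t_{2}^{\ast}=\frac{\lambda\mu}{2}a$, while $\lambda\mu>1/a^{2}$ becomes $\lambda>1/(a^{2}\mu)$ and gives $t^{\ast}=t_{3}^{\ast}=\sqrt{\lambda\mu}-\frac{1}{2a}$.

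The argument uses no delicate estimate; the points needing care are purely bookkeeping. One must track that the regularization weight appearing in the surrogate is $\lambda\mu$, not $\lambda$, so every threshold formula inherited from Lemma~\ref{le1} and (\ref{r16}) must be rescaled accordingly; that the \emph{strict} inequality $\mu<\|A\|_{2}^{-2}$ (not merely $\le$) is what makes the equality case $x=z$ forcing, thereby identifying $x^{\ast}$ as the minimizer of $C_{\mu}(\cdot,x^{\ast})$; and that Theorem~\ref{th1} only requires a \emph{local} minimizer, which we have a fortiori. With these in hand the case split in (\ref{r21}) is just (\ref{r16}) rescaled.
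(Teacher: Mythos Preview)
The paper does not supply its own proof of Theorem~\ref{th2}; it simply imports the statement from reference~[29]. Your argument is correct, and it is precisely the vector-case analogue of the proof the paper does give for Theorem~\ref{th4} (the matrix version): show $C_{\mu}(x,x^{\ast})\geq\mu C_{\lambda}(x)\geq\mu C_{\lambda}(x^{\ast})=C_{\mu}(x^{\ast},x^{\ast})$ using $0<\mu<\|A\|_{2}^{-2}$, conclude that $x^{\ast}$ minimizes $C_{\mu}(\cdot,x^{\ast})$, and then read off the thresholding representation from Theorem~\ref{th1} with $\lambda$ replaced by $\lambda\mu$.
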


By Definition 1, Theorem 1 and Theorem 2, the ITA for solving RVMP can be naturally defined as
\begin{equation}\label{r22}
\begin{array}{llll}
x^{k+1}&=&R_{\lambda \mu, P}(B_{u}(x^{k}))\\
&=&R_{\lambda \mu, P}(x^{k}+\mu A^{T}(b-Ax^{k}))
\end{array}
\end{equation}
where $B_{\mu}(x^{k})=x^{k}+\mu A^{T}(b-Ax^{k})$ and $R_{\lambda\mu}$ is obtained by replacing $\lambda$ with $\lambda\mu$ in $R_{\lambda}$.
The more detailed accounts of ITA for solving RVMP can be seen in [29].

\section{Iterative singular value thresholding algorithm (ISVTA) for solving RTrARMP}

Inspired by ITA for solving RMVP in Section 2, iterative singular value thresholding algorithm (ISVTA) is proposed to solve RTrARMP
in this section. We begin with the definition of a key building block, namely, the iterative singular value thresholding operator of RTrARMP.

\begin{definition}\label{de2}
The iterative singular value thresholding operator $G_{\lambda, P}$ is a diagonally nonlinear analytically expressive operator, and can be specified by
$$G_{\lambda, P}(X)=UG_{\lambda, P}(\Sigma)V^{T}=U\mathrm{Diag}\Big(g_{\lambda}(\sigma_{i}(X))\Big)V^{T}$$
where $g_{\lambda}$ is defined in Lemma 2, and
$$X=U\mathrm{Diag}(\sigma_{1}(X), \sigma_{2}(X), \cdots, \sigma_{m}(X))V^{T}$$
is the singular value decomposition (SVD) of matrix $X\in \mathcal{R}^{m\times n}$, $\sigma_{1}(X)\geq\sigma_{2}(X)\geq\cdots\geq\sigma_{m}(x)\geq0$ are
the singular values of matrix $X$.
\end{definition}

Before computing ISVTA for solving RTrARMP, we need the following von Neumann's trace inequality which plays a key role in our later analysis.
\begin{lemma}\label{le4}
(von Neumann's trace inequality) For any matrices $X, Y\in \mathcal{R}^{m\times n}$ $(m\leq n)$, $Tr(X^{T}Y)\leq\sum_{i=1}^{m}\sigma_{i}(X)\sigma_{i}(Y)$, where $\sigma_{1}(X)\geq\sigma_{2}(X)\geq\cdots\geq\sigma_{m}(x)\geq0$ and $\sigma_{1}(Y)\geq\sigma_{2}(Y)\geq\cdots\geq\sigma_{m}(Y)\geq0$ are the singular value
of matrices $X$ and $Y$ respectively. The equality holds if and only if there exist unitary matrices $U$ and $V$ that such $X=UDiag(\sigma_{1}(X), \sigma_{2}(X), \cdots, \sigma_{m}(X))V^{T}$ and $Y=UDiag(\sigma_{1}(Y), \sigma_{2}(Y), \cdots, \sigma_{m}(Y))V^{T}$ as the singular value decompositions (SVDs) of matrices $X$ and $Y$
simultaneously.
\end{lemma}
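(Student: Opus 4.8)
The plan is to reduce the inequality to a purely combinatorial statement about doubly substochastic matrices. First I would fix full singular value decompositions $X = U_{1}\Sigma_{X}V_{1}^{T}$ and $Y = U_{2}\Sigma_{Y}V_{2}^{T}$, with $U_{1},U_{2}\in\mathcal{R}^{m\times m}$ and $V_{1},V_{2}\in\mathcal{R}^{n\times n}$ orthogonal and $\Sigma_{X},\Sigma_{Y}\in\mathcal{R}^{m\times n}$ carrying the singular values on the main diagonal. Substituting and using cyclic invariance of the trace gives $Tr(X^{T}Y) = Tr\big(Z\,\Sigma_{X}^{T}W\,\Sigma_{Y}\big)$, where $W = U_{1}^{T}U_{2}$ is $m\times m$ orthogonal and $Z = V_{2}^{T}V_{1}$ is $n\times n$ orthogonal. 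Since $\Sigma_{X}^{T}$ and $\Sigma_{Y}$ annihilate every index exceeding $m$, expanding the trace entrywise collapses the sum to $Tr(X^{T}Y) = \sum_{p,q=1}^{m}\sigma_{p}(Y)\,\sigma_{q}(X)\,N_{pq}$, where $N_{pq} = Z_{pq}W_{qp}$.

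Next I would observe that $N=(N_{pq})_{p,q=1}^{m}$ is doubly substochastic in absolute value. By the Cauchy--Schwarz inequality, $\sum_{q=1}^{m}|N_{pq}| \le \big(\sum_{q=1}^{m}Z_{pq}^{2}\big)^{1/2}\big(\sum_{q=1}^{m}W_{qp}^{2}\big)^{1/2} \le 1$, because a row of the orthogonal matrix $Z$ truncated to its first $m$ entries has norm at most $1$ while a column of the orthogonal matrix $W$ has norm exactly $1$; the identical bound gives $\sum_{p=1}^{m}|N_{pq}| \le 1$. Hence $Tr(X^{T}Y) \le \sum_{p,q=1}^{m}\sigma_{p}(Y)\sigma_{q}(X)\,|N_{pq}|$, and it remains to prove that $\sum_{p,q=1}^{m}a_{p}b_{q}c_{pq} \le \sum_{i=1}^{m}a_{i}b_{i}$ whenever $a_{1}\ge\cdots\ge a_{m}\ge 0$, $b_{1}\ge\cdots\ge b_{m}\ge 0$, $c_{pq}\ge 0$, and every row and column sum of $(c_{pq})$ is at most $1$.

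For this final step I would use summation by parts twice. Writing $b_{q} = \sum_{l=q}^{m}(b_{l}-b_{l+1})$ with $b_{m+1}:=0$, so that all summands are nonnegative, the left-hand side becomes $\sum_{l=1}^{m}(b_{l}-b_{l+1})\sum_{q=1}^{l}\sum_{p=1}^{m}a_{p}c_{pq}$. For fixed $l$, set $d_{p}:=\sum_{q=1}^{l}c_{pq}$; then $d_{p}\in[0,1]$ from the row constraint and $\sum_{p}d_{p}\le l$ from the column constraints, so $\sum_{p}a_{p}d_{p}\le\sum_{p=1}^{l}a_{p}$ because $a$ is nonincreasing. Substituting this bound and telescoping once more yields $\sum_{l=1}^{m}(b_{l}-b_{l+1})\sum_{p=1}^{l}a_{p} = \sum_{i=1}^{m}a_{i}b_{i}$, which completes the inequality (with $a_{i}=\sigma_{i}(X)$, $b_{i}=\sigma_{i}(Y)$). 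One could instead invoke the Birkhoff--von Neumann correspondence between the doubly substochastic polytope and subpermutation matrices together with the rearrangement inequality, but the Abel-summation route is self-contained.

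For the equality characterization I would trace through which inequalities were used. Equality forces $N_{pq}\ge 0$ on indices with $\sigma_{p}(Y)\sigma_{q}(X)>0$ and forces the combinatorial bound to be tight, which pins $N$ down to a permutation-type pattern supported only where the relevant singular values of $X$ and $Y$ agree; translating this back constrains $W$ and $Z$ to block-diagonal form compatible with the multiplicities of the singular values, so one may re-choose the orthogonal factors to obtain common $U,V$ with $X=U\,\mathrm{Diag}(\sigma_{1}(X),\dots,\sigma_{m}(X))\,V^{T}$ and $Y=U\,\mathrm{Diag}(\sigma_{1}(Y),\dots,\sigma_{m}(Y))\,V^{T}$ simultaneously; the converse is an immediate computation. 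I expect this equality analysis to be the only delicate part of the argument — the inequality itself is routine once it has been recast in terms of a doubly substochastic matrix.
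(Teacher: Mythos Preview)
Your argument is correct and follows one of the standard routes to von Neumann's inequality: reduce via simultaneous SVDs to a bilinear form $\sum_{p,q}\sigma_p(Y)\sigma_q(X)N_{pq}$ with $|N|$ doubly substochastic, then maximize over that polytope by Abel summation (equivalently, by the rearrangement inequality on subpermutations). The computation of $N_{pq}=Z_{pq}W_{qp}$ and the Cauchy--Schwarz bounds on the row and column sums are fine, and the equality analysis is sketched in the right direction.

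However, there is nothing to compare against: the paper does not prove Lemma~\ref{le4}. It is quoted as the classical von Neumann trace inequality and used as a black box in the proof of Theorem~\ref{th3}, where only the inequality $\|Y-X\|_F^2\ge\sum_i(\sigma_i(Y)-\sigma_i(X))^2$ and the attainment condition (simultaneous SVD) are invoked. So your write-up supplies a proof the paper simply omits; any correct textbook argument would be acceptable here, and yours is one.
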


Define a function of matrix $Y\in \mathcal{R}^{m\times n}$ as
\begin{equation}\label{r23}
f_{\lambda}(Y)=\|Y-X\|_{F}^{2}+\lambda\cdot P(Y)
\end{equation}
and
\begin{equation}\label{r24}
Y^{\ast}=\arg\min_{Y\in \mathcal{R}^{m\times n}}\Big\{\|Y-X\|_{F}^{2}+\lambda\cdot P(Y)\Big\}.
\end{equation}

According to the von Neumann's trace inequality in Lemma 4, the iterative singular value thresholding operator of matrix $Y^{\ast}$ can be
presented in the following description.

\begin{theorem}\label{th3}
Let $X=U\mathrm{Diag}(\sigma_{1}(X), \sigma_{2}(X), \cdots, \sigma_{m}(X))V^{T}$ be the SVD of matrix $X\in \mathcal{R}^{m\times n}$.
Then the iterative singular value thresholding operator of matrix $Y^{\ast}$ with the parameter $t^{\ast}>0$ can be expressed as
\begin{equation}\label{r25}
Y^{\ast}=G_{\lambda\mu, P}(X)=UG_{\lambda\mu, P}(\Sigma)V^{T}=U\mathrm{Diag}\Big(g_{\lambda\mu}(\sigma_{i}(X))\Big)V^{T}
\end{equation}
and
\begin{eqnarray*}
\sigma_{i}(Y^{\ast})=
\left\{
    \begin{array}{ll}
      g_{\lambda\mu}(\sigma_{i}(X)), &\ \ {\mathrm{if}\ \sigma_{i}(X)>t^{\ast};} \\
      0, & \ \ {\mathrm{if}\ \sigma_{i}(X)\leq t^{\ast},}
    \end{array}
  \right.
\end{eqnarray*}
where the parameter $t^{\ast}$ is the threshold value defined in (21).
\end{theorem}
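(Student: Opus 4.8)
The plan is to reduce the matrix problem \eqref{r24} to a family of decoupled scalar problems of the kind handled in Lemma~\ref{le2}, with von Neumann's trace inequality (Lemma~\ref{le4}) serving as the bridge. First I would expand the Frobenius term, $\|Y-X\|_F^2 = \|Y\|_F^2 - 2\,Tr(Y^TX) + \|X\|_F^2$, and use $\|Y\|_F^2 = \sum_{i=1}^m \sigma_i(Y)^2$, $\|X\|_F^2 = \sum_{i=1}^m \sigma_i(X)^2$, together with $P(Y) = \sum_{i=1}^m \rho_a(\sigma_i(Y))$. This shows that the only term in $f_\lambda(Y)$ that couples the singular vectors of $Y$ to those of $X$ is $-2\,Tr(Y^TX)$; everything else depends on $Y$ only through its singular values.

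By Lemma~\ref{le4}, $Tr(Y^TX) \le \sum_{i=1}^m \sigma_i(X)\sigma_i(Y)$, hence
\[
f_{\lambda}(Y) \ \ge\ \sum_{i=1}^m \Big( \big(\sigma_i(Y)-\sigma_i(X)\big)^2 + \lambda\,\rho_a(\sigma_i(Y)) \Big),
\]
with equality precisely when $X$ and $Y$ admit a simultaneous SVD, i.e. $Y = U\,\mathrm{Diag}(\sigma_1(Y),\ldots,\sigma_m(Y))\,V^T$ with the same $U,V$ as in the SVD of $X$. The right-hand side is now fully separable in the nonnegative scalars $\sigma_1(Y)\ge\cdots\ge\sigma_m(Y)\ge 0$, and its $i$-th summand is exactly the function $f_\lambda(\beta)=(\beta-\gamma)^2+\lambda\rho_a(\beta)$ of Lemma~\ref{le2} with $\gamma=\sigma_i(X)\ge 0$. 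Applying Lemma~\ref{le2} coordinatewise, the unconstrained minimizer of the $i$-th summand is $g_\lambda(\sigma_i(X))$ when $\sigma_i(X)>t^{\ast}$ and $0$ when $\sigma_i(X)\le t^{\ast}$; since $\gamma=\sigma_i(X)\ge 0$ and $\rho_a$ is even, this minimizer is itself nonnegative, so the constraint $\beta\ge 0$ is inactive. Finally, by the monotonicity in Lemma~\ref{le3}, the ordering $\sigma_1(X)\ge\cdots\ge\sigma_m(X)$ forces the corresponding minimizers to satisfy $\sigma_1(Y^{\ast})\ge\cdots\ge\sigma_m(Y^{\ast})\ge 0$, a legitimate descending list of singular values. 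Therefore the lower bound is attained by $Y^{\ast}=U\,\mathrm{Diag}\big(g_{\lambda\mu}(\sigma_1(X)),\ldots,g_{\lambda\mu}(\sigma_m(X))\big)V^T = G_{\lambda\mu,P}(X)$ (working throughout with $\lambda\mu$ in place of $\lambda$ to match the statement), which yields both the closed form \eqref{r25} and the stated thresholding rule for $\sigma_i(Y^{\ast})$.

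The main obstacle — indeed essentially the only subtle point — is the interplay between the coordinatewise scalar minimization and the structural constraints on singular values: one must verify that minimizing each summand independently does not destroy the ordering $\sigma_1(Y)\ge\cdots\ge\sigma_m(Y)\ge 0$, so that the equality case of von Neumann's inequality can actually be realized by a genuine matrix $Y^{\ast}$. This is exactly what Lemma~\ref{le3} (monotonicity of the scalar thresholding map) and the evenness of $\rho_a$ guarantee. The remainder is the routine algebraic expansion of the Frobenius norm and a direct appeal to Lemma~\ref{le2}.
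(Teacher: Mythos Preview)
Your proposal is correct and follows essentially the same route as the paper: expand $\|Y-X\|_F^2$, invoke von Neumann's trace inequality (Lemma~\ref{le4}) to reduce to the separable scalar problem $\sum_i\big((\sigma_i(Y)-\sigma_i(X))^2+\lambda\rho_a(\sigma_i(Y))\big)$, apply Lemma~\ref{le2} coordinatewise, and then use the monotonicity of Lemma~\ref{le3} to guarantee the resulting values remain a bona fide descending singular-value list. If anything, your write-up is a bit more explicit than the paper's about why the nonnegativity and ordering constraints are inactive, which is precisely the subtle point you flagged.
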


\begin{proof}
Since $\sigma_{1}(X)\geq\sigma_{2}(X)\geq\cdots\geq\sigma_{m}\geq0$ are the singular values of matrix $X$, the minimization problem
\begin{equation}\label{r26}
\min_{Y\in \mathcal{R}^{m\times n}}\Big\{\|Y-X\|_{F}^{2}+\lambda\cdot P(Y)\Big\}
\end{equation}
can be rewritten as
$$\min_{\tau:\tau_{1}\geq\tau_{2}\geq\cdots\geq\tau_{m}\geq0}\bigg\{\min_{\sigma(Y)=\tau}\Big\{\|Y-X\|_{F}^{2}+\lambda\cdot \sum_{i=1}^{m}\rho_{a}(\tau_{i})\Big\}\bigg\}.$$
By using the trace inequality in Lemma 4, we have
\begin{eqnarray*}
\|Y-X\|_{F}^{2}&=&Tr(Y^{T}Y)-2Tr(Y^{T}X)+Tr(X^{T}X)\\
&=&\sum_{i=1}^{m}\sigma_{i}^{2}(Y)-2Tr(Y^{T}X)+\sum_{i=1}^{m}\sigma_{i}^{2}(X)\\
&\geq&\sum_{i=1}^{m}\sigma_{i}^{2}(Y)-2\sum_{i=1}^{m}\sigma_{i}(Y)\sigma_{i}(X)+\sum_{i=1}^{m}\sigma_{i}(X)\\
&=&\sum_{i=1}^{m}(\sigma_{i}(Y)-\sigma_{i}(X))^{2}.
\end{eqnarray*}

Noting that the above equality holds when $Y$ admits the singular value decomposition
$$Y=U\mathrm{Diag}(\sigma_{1}(Y), \sigma_{2}(Y), \cdots, \sigma_{m}(Y))V^{T}$$
where $U$ and $V$ are the left and right orthonormal matrices in the SVD of matrix $X$. In this case, the optimization
problem (24) reduces to
\begin{equation}\label{r27}
\min_{\sigma(Y)}\sum_{i=1}^{m}\Big\{(\sigma_{i}(Y)-\sigma_{i}(X))^{2}+\lambda\cdot \sum_{i=1}^{m}\rho_{a}(\sigma_{i}(Y))\Big\},
\end{equation}
which is consistent with Lemma 2, and since the function $g_{\lambda\mu}(\cdot)$ is monotone by Lemma 3.

So, there exists
$$\sigma_{i}(Y)=g_{\lambda\mu}(\sigma_{i}(X))$$
and
$$\sigma_{1}(Y)\geq\sigma_{2}(Y)\geq\cdots\geq\sigma_{m}(Y)\geq0$$
for $\sigma_{1}(X)\geq\sigma_{2}(X)\geq\cdots\geq\sigma_{m}(X)\geq0$.

Noting that $g_{\lambda\mu}(\cdot)$ acts only on the nonnegative part of the real line since all the $\sigma_{i}(X)$ are nonnegative.
Hence, we can see that the iterative singular value thresholding operator of the problem (24) has the form of (25).\\
This completes the proof.
\end{proof}

The iterative singular value thresholding operator $G_{\lambda\mu, P}$ simply applies the iterative thresholding operator $R_{\lambda\mu,P}$
defined in Section 2 to the singular values of $X$, and effectively shrinking them towards zero. This is the reason why we refer to this
transformation as the singular value thresholding operator for the non-convex fraction function. In some sense, the thresholding operator
$G_{\lambda\mu, P}$ is a straightforward extension of the iterative thresholding operator $R_{\lambda\mu, P}$. It is clear that
if many of the singular values of matrix $X$ are below the threshold value $t^{\ast}$, the rank of $G_{\lambda\mu, P}(X)$ may be considerably
lower than the rank of matrix $X$, like the iterative thresholding operator which is applied in RVMP to sparse outputs whenever some entries
of the input are below the threshold value $t^{\ast}$.

In the following process, we will show that the optimal solution to RTrARMP can also be expressed as an iterative singular
value thresholding operation.

For any positive parameters $\lambda>0$, $\mu>0$ and $Z\in \mathcal{R}^{m\times n}$, let
\begin{equation}\label{r28}
C_{\lambda}(X)=\|\mathcal{A}(X)-b\|_{2}^{2}+\lambda\cdot P(X),
\end{equation}
\begin{equation}\label{r29}
\begin{array}{llll}
C_{\mu}(X, Z)&=&\mu\Big[C_{\lambda}(X)-\|\mathcal{A}(X)-\mathcal{A}(Z)\|_{2}^{2}\Big]+\|X-Z\|_{F}^{2}
\end{array}
\end{equation}
and
$$B_{\mu}(X)=X+\mu \mathcal{A}^{\ast}(b-\mathcal{A}(X)).$$
Clearly, $C_{\mu}(X, X)=\mu C_{\lambda}(X)$.

\begin{theorem}\label{th4}
For any fixed $\lambda>0$ and $0<\mu<\frac{1}{\|\mathcal{A}\|_{2}^{2}}$. If $X^{\ast}$ is an optimal solution of
$\displaystyle\min_{X\in \mathcal{R}^{m\times n}}C_{\lambda}(X)$, then $X^{\ast}$ is also an optimal solution of $\displaystyle\min_{X\in
\mathcal{R}^{m\times n}}C_{\mu}(X,X^{\ast})$, that is
$$C_{\mu}(X^{\ast},X^{\ast})\leq C_{\mu}(X,X^{\ast})$$
for any $X\in \mathcal{R}^{m\times n}$.
\end{theorem}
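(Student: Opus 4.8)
The plan is to show directly that the difference $C_{\mu}(X,X^{\ast})-C_{\mu}(X^{\ast},X^{\ast})$ is nonnegative for every $X\in\mathcal{R}^{m\times n}$. First I would use the identity $C_{\mu}(X^{\ast},X^{\ast})=\mu C_{\lambda}(X^{\ast})$ together with the definition of $C_{\mu}(X,X^{\ast})$ to obtain
\[
C_{\mu}(X,X^{\ast})-C_{\mu}(X^{\ast},X^{\ast})=\mu\big(C_{\lambda}(X)-C_{\lambda}(X^{\ast})\big)-\mu\|\mathcal{A}(X)-\mathcal{A}(X^{\ast})\|_{2}^{2}+\|X-X^{\ast}\|_{F}^{2}.
\]

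Next I would bound each piece. Because $X^{\ast}$ is a (global) minimizer of $C_{\lambda}$, the term $\mu\big(C_{\lambda}(X)-C_{\lambda}(X^{\ast})\big)$ is nonnegative. For the other two terms, linearity of $\mathcal{A}$ gives $\mathcal{A}(X)-\mathcal{A}(X^{\ast})=\mathcal{A}(X-X^{\ast})$, and the operator-norm estimate $\|\mathcal{A}(X-X^{\ast})\|_{2}^{2}\leq\|\mathcal{A}\|_{2}^{2}\,\|X-X^{\ast}\|_{F}^{2}$ yields
\[
-\mu\|\mathcal{A}(X)-\mathcal{A}(X^{\ast})\|_{2}^{2}+\|X-X^{\ast}\|_{F}^{2}\geq\big(1-\mu\|\mathcal{A}\|_{2}^{2}\big)\|X-X^{\ast}\|_{F}^{2}\geq 0,
\]
the last inequality following from the hypothesis $0<\mu<1/\|\mathcal{A}\|_{2}^{2}$. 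Summing the two nonnegative contributions gives $C_{\mu}(X,X^{\ast})\geq C_{\mu}(X^{\ast},X^{\ast})$ for all $X$, i.e. $X^{\ast}$ minimizes $C_{\mu}(\cdot,X^{\ast})$, as claimed.

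There is no genuine obstacle here; the only point that needs a little care is the norm bookkeeping — confirming that $\|\mathcal{A}\|_{2}$ is understood as the operator norm of $\mathcal{A}$ regarded as a map from $(\mathcal{R}^{m\times n},\|\cdot\|_{F})$ into $(\mathcal{R}^{d},\|\cdot\|_{2})$, so that $\|\mathcal{A}(H)\|_{2}\leq\|\mathcal{A}\|_{2}\|H\|_{F}$ is legitimate, and observing that the strict bound on $\mu$ is exactly what makes the quadratic surrogate term $\|X-X^{\ast}\|_{F}^{2}$ dominate the $\mathcal{A}$-dependent term. I would also remark that this is the matrix analogue of the surrogate-function device used for RVMP in Section 2: it is precisely what will allow us, in the following step, to replace the minimization of $C_{\lambda}$ by the minimization of the functional $C_{\mu}(\cdot,X^{\ast})$, whose minimizer admits the singular-value thresholding representation of Theorem 3.
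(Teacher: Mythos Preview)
Your proof is correct and follows essentially the same route as the paper: both arguments use the identity $C_{\mu}(X^{\ast},X^{\ast})=\mu C_{\lambda}(X^{\ast})$, the optimality of $X^{\ast}$ for $C_{\lambda}$, and the operator-norm bound $\|\mathcal{A}(X-X^{\ast})\|_{2}^{2}\leq\|\mathcal{A}\|_{2}^{2}\|X-X^{\ast}\|_{F}^{2}$ together with the hypothesis on $\mu$ to conclude. The only cosmetic difference is that the paper chains the inequalities $C_{\mu}(X,X^{\ast})\geq\mu C_{\lambda}(X)\geq\mu C_{\lambda}(X^{\ast})=C_{\mu}(X^{\ast},X^{\ast})$ directly, whereas you compute the difference and split it into two nonnegative pieces; the content is identical.
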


\begin{proof}
By the definition of $C_{\mu}(X, Z)$, we have
\begin{eqnarray*}
C_{\mu}(X,X^{\ast})&=&\mu\Big[C_{\lambda}(X)-\|\mathcal{A}(X)-\mathcal{A}(X^{\ast})\|_{2}^{2}\Big]+\|X-X^{\ast}\|_{F}^{2}\\
&=&\mu\Big[\|\mathcal{A}(X)-b\|_{2}^{2}+\lambda P(X)\Big]+\|X-X^{\ast}\|_{F}^{2}\\
&&-\mu\|\mathcal{A}(X)-\mathcal{A}(X^{\ast})\|_{2}^{2}\\
&\geq&\mu\Big[\|\mathcal{A}(X)-b\|_{2}^{2}+\lambda\cdot P(X)\Big]\\
&=&\mu C_{\lambda}(X)\\
&\geq&\mu C_{\lambda}(X^{\ast})\\
&=&C_{\mu}(X^{\ast},X^{\ast})
\end{eqnarray*}
where the first inequality holds by the fact that
\begin{eqnarray*}
\|\mathcal{A}(X)-\mathcal{A}(X^{\ast})\|_{2}^{2}&=&\|Avec(X)-Avec(X^{\ast})\|_{2}^{2}\\
&\leq&\|A\|_{2}^{2}\cdot\|vec(X)-vec(X^{\ast})\|_{2}^{2}\\
&\leq&\|\mathcal{A}\|_{2}^{2}\cdot\|X-X^{\ast}\|_{F}^{2}.
\end{eqnarray*}
This completes the proof.
\end{proof}

\begin{theorem}\label{th5}
For any fixed $\lambda>0$, $\mu>0$ and matrix $Z\in \mathcal{R}^{m\times n}$,
then $\displaystyle\min_{X\in \mathcal{R}^{m\times n}}C_{\mu}(X,Z)$ is equivalent to
$$\min_{X\in \mathcal{R}^{m\times n}}\Big\{\|X-B_{\mu}(Z)\|_{F}^{2}+\lambda\mu P(X)\Big\}$$
where $B_{\mu}(Z)=Z-\mu \mathcal{A}^{\ast}\mathcal{A}(Z)+\mu \mathcal{A}^{\ast}(b)$.
\end{theorem}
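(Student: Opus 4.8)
The plan is to expand the definition of $C_{\mu}(X,Z)$ and reorganize the quadratic terms in $X$ so that they collapse into a single perfect square $\|X - B_{\mu}(Z)\|_F^2$ plus a remainder that does not depend on $X$. First I would write out
$$C_{\mu}(X,Z)=\mu\Big[\|\mathcal{A}(X)-b\|_2^2+\lambda P(X)\Big]-\mu\|\mathcal{A}(X)-\mathcal{A}(Z)\|_2^2+\|X-Z\|_F^2,$$
and isolate the parts that are quadratic and linear in $X$, namely $Q(X):=\mu\|\mathcal{A}(X)-b\|_2^2-\mu\|\mathcal{A}(X)-\mathcal{A}(Z)\|_2^2+\|X-Z\|_F^2$, since the term $\lambda\mu P(X)$ already appears in the target functional untouched. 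The key observation is that the two $\mu\|\mathcal{A}(\cdot)\|_2^2$-type terms have the \emph{same} purely quadratic-in-$X$ piece $\mu\|\mathcal{A}(X)\|_2^2$, so when we subtract them that piece cancels exactly, leaving in $Q(X)$ only the genuine quadratic term $\|X-Z\|_F^2=\|X\|_F^2-2\langle X,Z\rangle+\|Z\|_F^2$ together with terms linear in $X$ coming from the cross terms $-2\mu\langle\mathcal{A}(X),b\rangle$ and $+2\mu\langle\mathcal{A}(X),\mathcal{A}(Z)\rangle=2\mu\langle X,\mathcal{A}^{\ast}\mathcal{A}(Z)\rangle$.

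Next I would collect all the linear-in-$X$ contributions. Using the adjoint identity $\langle\mathcal{A}(X),v\rangle=\langle X,\mathcal{A}^{\ast}(v)\rangle$ (with the Frobenius inner product on the matrix side), the linear part of $Q(X)$ becomes $-2\langle X,\,Z-\mu\mathcal{A}^{\ast}\mathcal{A}(Z)+\mu\mathcal{A}^{\ast}(b)\rangle=-2\langle X,B_{\mu}(Z)\rangle$, which is precisely why $B_{\mu}(Z)=Z-\mu\mathcal{A}^{\ast}\mathcal{A}(Z)+\mu\mathcal{A}^{\ast}(b)$ is the natural object. Hence
$$Q(X)=\|X\|_F^2-2\langle X,B_{\mu}(Z)\rangle+\big(\text{terms free of }X\big)=\|X-B_{\mu}(Z)\|_F^2+\big(\text{terms free of }X\big).$$
Adding back $\lambda\mu P(X)$ gives $C_{\mu}(X,Z)=\|X-B_{\mu}(Z)\|_F^2+\lambda\mu P(X)+c(Z)$, where $c(Z)$ is a constant independent of $X$. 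Since minimizing over $X$ is unaffected by the additive constant $c(Z)$, the two minimization problems are equivalent, and in fact $\arg\min$ sets coincide.

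The only mildly delicate point — and the step I would treat most carefully — is the bookkeeping of the constant $c(Z)$: one must verify that every term containing $X$ has indeed been absorbed into the square, i.e. that $\|Z\|_F^2$, the residual $\mu\|b\|_2^2$-type pieces, $-\mu\|\mathcal{A}(Z)\|_2^2$, $2\mu\langle\mathcal{A}(Z),b\rangle$, and the leftover $-\|B_{\mu}(Z)\|_F^2$ from completing the square all lie in $c(Z)$ and nothing involving $X$ is left behind. This is purely a matter of careful expansion; there is no genuine analytic obstacle, because the crucial cancellation of $\mu\|\mathcal{A}(X)\|_2^2$ is exactly what the definition of $C_{\mu}$ was designed to produce. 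I would close by remarking that this surrogate-functional trick (and the resulting equivalence) is the matrix analogue of equations~(18)--(19) and Theorem~\ref{th1} in the vector setting, and that combining Theorem~\ref{th5} with Theorem~\ref{th3} immediately yields the fixed-point characterization of optimal solutions of RTrARMP and hence the ISVTA iteration.
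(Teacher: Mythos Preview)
Your proposal is correct and follows essentially the same route as the paper: expand $C_{\mu}(X,Z)$, let the two $\mu\|\mathcal{A}(X)\|_2^2$ terms cancel, use the adjoint relation to gather the linear-in-$X$ pieces into $-2\langle X,B_{\mu}(Z)\rangle$, and complete the square to obtain $C_{\mu}(X,Z)=\|X-B_{\mu}(Z)\|_F^2+\lambda\mu P(X)+c(Z)$ with $c(Z)=\mu\|b\|_2^2+\|Z\|_F^2-\mu\|\mathcal{A}(Z)\|_2^2-\|B_{\mu}(Z)\|_F^2$ independent of $X$. The paper's proof is precisely this computation, stated more tersely; your extra care with the constant and the explicit mention of the adjoint identity only make the argument cleaner.
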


\begin{proof}
In accordance with the definition, $C_{\mu}(X,Z)$ can be rewritten as
\begin{eqnarray*}
C_{\mu}(X,Z)&=&\|X-(Z-\mu \mathcal{A}^{\ast}\mathcal{A}(Z)+\mu \mathcal{A}^{\ast}(b))\|_{F}^{2}+\lambda\mu P(X)+\mu\|b\|_{2}^{2}\\
&&+\|Z\|_{F}^{2}-\mu\|\mathcal{A}(Z)\|_{2}^{2}-\|Z-\mu \mathcal{A}^{\ast}\mathcal{A}(Z)+\mu \mathcal{A}^{\ast}(b)\|_{F}^{2}\\
&=&\|X-B_{\mu}(Z)\|_{F}^{2}+\lambda\mu P(X)+\mu\|b\|_{2}^{2}+\|Z\|_{F}^{2}-\mu\|\mathcal{A}(Z)\|_{2}^{2}\\
&&-\|B_{\mu}(Z)\|_{F}^{2},
\end{eqnarray*}
which implies that $\displaystyle\min_{X\in \mathcal{R}^{m\times n}}C_{\mu}(X,Z)$ for any fixed $\lambda>0$, $\mu>0$ and matrix
$Z\in \mathcal{R}^{m\times n}$ is equivalent to
$$\min_{X\in \mathcal{R}^{m\times n}}\Big\{\|X-B_{\mu}(Z)\|_{F}^{2}+\lambda\mu P(X)\Big\}.$$
Therefore, $X^{\ast}$ is an optimal solution of $C_{\mu}(X,Z)$ if and only if $\sigma_{i}(X^{\ast})$ solves the problem
$$\min_{\sigma_{i}(X)\in \mathcal{R}^{+}}\Big\{(\sigma_{i}(X)-\sigma_{i}(B_{\mu}(Z)))^{2}+\lambda\mu \rho_{a}(\sigma_{i}(X))\Big\}.$$
This completes the proof.\\
\end{proof}

Theorem 4 shows that $X^{\ast}$ is an optimal solution to $\displaystyle\min_{X\in \mathcal{R}^{m\times n}}C_{\lambda}(X)$ if and only if
$X^{\ast}$ is an optimal solution of $\displaystyle\min_{X\in \mathcal{R}^{m\times n}}C_{\mu}(X,X^{\ast})$ with $Z=X^{\ast}$. Moreover,
combining Lemma 2 and Theorem 2, 3, 4, 5, we can immediately conclude that the thresholding representation of RTr-ARMP can be exactly given by
\begin{equation}\label{r30}
X^{\ast}=G_{\lambda\mu,P}(B_{\mu}(X^{\ast})).
\end{equation}
Assume that the SVD for matrix $B_{\mu}(X^{\ast})$ is $U^{\ast}\mbox{Diag}(\sigma(B_{\mu}(X^{\ast})))(V^{\ast})^{T}$ and
$\sigma(B_{\mu}(X^{\ast}))$ represents the singular value vector of matrix $B_{\mu}(X^{\ast})$ and $\sigma_{i}(B_{\mu}(X^{\ast}))$
represents the $i$-th largest entries of the singular value vector $\sigma(B_{\mu}(X^{\ast}))$, then
\begin{equation}\label{r31}
X^{\ast}=U^{\ast}\mathrm{Diag}(\sigma(B_{\mu}(X^{\ast})))(V^{\ast})^{T}
\end{equation}
which means that the singular values of the matrix $X^{\ast}$ satisfy

\begin{equation}\label{r32}
\sigma_{i}(X^{\ast})=
\left\{
    \begin{array}{ll}
      g_{\lambda\mu}(\sigma_{i}(B_{\mu}(X^{\ast}))), & \ \ {\mathrm{if}\ \sigma_{i}(B_{\mu}(X^{\ast}))>t^{\ast};} \\
      0, & \ \ {\mathrm{if}\ \sigma_{i}(B_{\mu}(X^{\ast}))\leq t^{\ast},}
    \end{array}
  \right.
\end{equation}
for $i=1,\cdots,m$, where $t^{\ast}$ is the threshold value and it is defined in (21).

We are now in the position to introduce the ISVTA which is proposed to solve RTrARMP.

Starting with $X^{0}$, inductively define for $k=0,1,2,\cdots,$
\begin{eqnarray*}
X^{k+1}&=&\displaystyle G_{\lambda\mu, P}(B_{\mu}(X^{k}))\\
&=&U^{k}\mathrm{Diag}(g_{\lambda\mu}(\sigma_{i}(B_{\mu}(X^{k}))))(V^{k})^{T}
\end{eqnarray*}
until a stopping criterion is reached, and
$$\sigma_{i}(X^{k+1})=
\left\{
    \begin{array}{ll}
      g_{\lambda\mu}(\sigma_{i}(B_{\mu}(X^{k}))), & \ \ {\mathrm{if}\ \sigma_{i}(B_{\mu}(X^{k}))>t^{\ast};} \\
      0, & \ \ {\mathrm{if}\ \sigma_{i}(B_{\mu}(X^{k}))\leq t^{\ast},}
    \end{array}
  \right.$$
where $U^{k}$ and $V^{k}$ are unitary matrices, the singular value vector $\sigma(B_{\mu}(X^{k}))$ comes
from the SVD of matrix $B_{\mu}(X^{k})$, $\sigma_{i}(B_{\mu}(X^{k}))$ represents the $i$-th largest
entries of the singular value vector $\sigma(B_{\mu}(X^{k}))$, and $t^{\ast}$ is the threshold value
which is defined in (21).

It is well known that the quantity of the solution of a regularization problem depends seriously on the setting of the regularization
parameter $\lambda>0$. However, the selection of the proper regularization parameters is a very hard problem. In most and general cases,
an ¡±trial and error¡± method, say, the cross-validation method, is still an accepted or even unique choice. Nevertheless, when some prior
information is known for a problem, it is realistic to set the regularization parameter more reasonably and intelligently.

To make it clear, we suppose that the matrix $X^{\ast}$ of rank $r$ is the optimal solution to RTrARMP, and the singular values of matrix
$B_{\mu}(X^{\ast})$ are denoted as
$$\sigma_{1}(B_{\mu}(X^{\ast}))\geq\sigma_{2}(B_{\mu}(X^{\ast}))\geq\cdots\geq\sigma_{m}(B_{\mu}(X^{\ast})).$$
Then by Theorem 3, the following inequalities hold
$$\sigma_{i}(B_{\mu}(X^{\ast}))>t^{\ast}\Leftrightarrow i\in\{1,2,\cdots,r\},$$
$$\sigma_{i}(B_{\mu}(X^{\ast}))\leq t^{\ast}\Leftrightarrow i\in\{r+1,r+2,\cdots,m\}$$
where $t^{\ast}$ is the threshold value which is defined in (21).

According to $t_{3}^{\ast}\leq t_{2}^{\ast}$, we have
\begin{equation}\label{r33}
\left\{
  \begin{array}{ll}
   \sigma_{r}(B_{\mu}(X^{\ast}))\geq t^{\ast}\geq t_{3}^{\ast}=\sqrt{\lambda\mu}-\frac{1}{2a}; \\
   \sigma_{r+1}(B_{\mu}(X^{\ast}))<t^{\ast}\leq t_{2}^{\ast}=\frac{\lambda\mu}{2}a,
  \end{array}
\right.
\end{equation}
which implies
\begin{equation}\label{r34}
\frac{2\sigma_{r+1}(B_{\mu}(X^{\ast}))}{a\mu}\leq\lambda\leq\frac{(2a\sigma_{r}(B_{\mu}(X^{\ast}))+1)^{2}}{4a^{2}\mu}.
\end{equation}

For convenience, we denote $\lambda_{1}$ and $\lambda_{2}$ the left and the right of above inequality respectively.
Above estimate helps to set optimal regularization parameter. A choice of $\lambda$ is
$$\lambda=\left\{
            \begin{array}{ll}
              \lambda_{1}, & \ \ {\mathrm{if}\ \lambda_{1}\leq\frac{1}{a^{2}\mu};} \\
              \lambda_{2},  &\ \ {\mathrm{if}\ \lambda_{1}>\frac{1}{a^{2}\mu}.}
            \end{array}
          \right.
$$
In practice, we approximate $\sigma_{i}((B_{\mu}(X^{\ast})))$ by $\sigma_{i}((B_{\mu}(X^{k})))$ in (34), say, we can take
\begin{equation}\label{r35}
\begin{array}{llll}
\lambda^{\ast}=\left\{
            \begin{array}{ll}
              \frac{2\sigma_{r+1}(B_{\mu}(X^{k}))}{a\mu},  & \ \ {\mathrm{if}\ \frac{2\sigma_{r+1}(B_{\mu}(X^{k}))}{a\mu}\leq\frac{1}{a^{2}\mu};} \\
              \frac{(2a\sigma_{r}(B_{\mu}(X^{k}))+1)^{2}}{4a^{2}\mu},  & \ \ {\mathrm{if}\ \frac{2\sigma_{r+1}(B_{\mu}(X^{k}))}{a\mu}>\frac{1}{a^{2}\mu}.}
            \end{array}
          \right.
\end{array}
\end{equation}
in applications. When doing so, the ISVTA will be adaptive and free from the choice of
regularization parameter. Noting that (35) is valid for any $\mu$ satisfying $0<\mu<\frac{1}{\|\mathcal{A}\|_{2}^{2}}$.
In general, we can take $\mu=\mu_{0}=\frac{1-\varepsilon}{\|\mathcal{A}\|_{2}^{2}}$ with any small $\varepsilon\in(0,1)$ below.

Incorporated with different parameter-setting strategies, defines different implementation schemes
of the ISVTA. For example, we can have the following\\

Scheme 1: $\mu=\mu_{0}$, $\lambda_{n}$ is chosen by cross-validation and $a=a_{0}$.\\

Scheme 2: $\mu=\mu_{0}$, $\lambda_{n}=\lambda^{\ast}$ defined in (34) and $a=a_{0}$.\\\\
There is one more thing needed to be mentioned that the threshold value $t^{\ast}=t_{2}^{\ast}$ when the parameter
$\lambda_{n}=\lambda_{1}$ and the threshold value $t^{\ast}=t_{3}^{\ast}$ when the parameter $\lambda_{n}=\lambda_{2}$ in Scheme 2.

Moreover, we also proved that the value of $\lambda$ can not be chosen too large. Indeed,
there exists $\bar{\lambda}>0$ such that the optimal solution of RTrARMP is equal to zero for any $\lambda>\bar{\lambda}$.
We should declare that the results derived in this following discussion are worst-case ones, implying that the kind of guarantees
we obtain are over-pessimistic for all possibilities. Before we embark to this discussion, we first discuss some useful results of
RTrARMP which play a key role in analysis.

\begin{lemma} \label{lemma5}
Let $X^{\ast}$ be the optimal solution of RTrARMP, $\mathrm{rank}(X^{\ast})=r$, $X^{\ast}=U^{\ast}\Sigma^{\ast}(V^{\ast})^{T}$
and $\mathcal{A}(X^{\ast})=Avec(X^{\ast})= A(V^{\ast}\otimes U^{\ast})vec(\Sigma^{\ast})$. Then the columns in matrix
$B^{\ast}=A(V^{\ast}\otimes U^{\ast})$ corresponding to the support of vector $y^{\ast}=vec(\Sigma^{\ast})$ are linearly
independent.
\end{lemma}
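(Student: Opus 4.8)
The plan is to argue by contradiction with a one-parameter perturbation of $X^{\ast}$ that leaves the data-fidelity term untouched while forcing the penalty to be strictly concave along the perturbation direction, which is impossible for an interior minimizer.

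First I would suppose the columns of $B^{\ast}=A(V^{\ast}\otimes U^{\ast})$ indexed by $S=\mathrm{supp}(y^{\ast})$ are linearly dependent. Using $\mathrm{vec}(U^{\ast}\Sigma^{\ast}(V^{\ast})^{T})=(V^{\ast}\otimes U^{\ast})\mathrm{vec}(\Sigma^{\ast})$, the set $S$ consists of exactly the $r$ ``diagonal'' coordinates of an $m\times n$ array at which $\Sigma^{\ast}$ carries its strictly positive singular values $\sigma_{1}(X^{\ast})\ge\cdots\ge\sigma_{r}(X^{\ast})>0$. Linear dependence yields a nonzero $h\in\mathcal{R}^{mn}$ with $\mathrm{supp}(h)\subseteq S$ and $B^{\ast}h=0$. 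Set $y(\epsilon)=y^{\ast}+\epsilon h$; since $\mathrm{supp}(h)\subseteq S$, reshaping $y(\epsilon)$ back to an $m\times n$ array produces a matrix $\Sigma(\epsilon)$ supported on the same diagonal coordinates, with diagonal entries $\sigma_{i}(X^{\ast})+\epsilon h_{i}$ (writing $h_{i}$ for the coordinate of $h$ matching the $(i,i)$ position), and I put $X(\epsilon)=U^{\ast}\Sigma(\epsilon)(V^{\ast})^{T}$, so $X(0)=X^{\ast}$.

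Next I would examine the two terms of $C_{\lambda}$ along this path. For the fidelity term, $\mathcal{A}(X(\epsilon))=A\,\mathrm{vec}(X(\epsilon))=B^{\ast}y(\epsilon)=B^{\ast}y^{\ast}+\epsilon B^{\ast}h=B^{\ast}y^{\ast}=\mathcal{A}(X^{\ast})$, so $\|\mathcal{A}(X(\epsilon))-b\|_{2}^{2}$ is constant in $\epsilon$. For the penalty, since $U^{\ast}$ and $V^{\ast}$ are orthogonal, the singular values of $X(\epsilon)$ are the absolute values of the diagonal entries of $\Sigma(\epsilon)$; because each $\sigma_{i}(X^{\ast})$ appearing in the support is strictly positive, for $|\epsilon|$ small all these entries remain positive, hence $P(X(\epsilon))=\sum_{i}\rho_{a}(\sigma_{i}(X^{\ast})+\epsilon h_{i})$ on a neighbourhood of $0$. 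Each summand with $h_{i}\neq0$ is the composition of an affine map in $\epsilon$ with $\rho_{a}$, which is strictly concave on $(0,+\infty)$ since $\rho_{a}(t)=1-\frac{1}{at+1}$ has $\rho_{a}''(t)=-\frac{2a^{2}}{(at+1)^{3}}<0$; the remaining summands are constant. As $h\neq0$, at least one summand is strictly concave, so $\epsilon\mapsto P(X(\epsilon))$ is strictly concave near $0$.

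Finally, on that neighbourhood $C_{\lambda}(X(\epsilon))=\|\mathcal{A}(X^{\ast})-b\|_{2}^{2}+\lambda P(X(\epsilon))$ is a strictly concave function of the single variable $\epsilon$, and a strictly concave function on an open interval cannot attain a minimum at an interior point: choosing $\epsilon_{1}<0<\epsilon_{2}$ in the interval, strict concavity places $C_{\lambda}(X(0))$ strictly below a convex combination of $C_{\lambda}(X(\epsilon_{1}))$ and $C_{\lambda}(X(\epsilon_{2}))$, contradicting $C_{\lambda}(X(0))\le\min\{C_{\lambda}(X(\epsilon_{1})),C_{\lambda}(X(\epsilon_{2}))\}$, which holds by global optimality of $X^{\ast}$ for RTrARMP. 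Hence the columns of $B^{\ast}$ indexed by $S$ are linearly independent. The part requiring the most care is the bookkeeping in the first step: checking that the Kronecker-product vectorization places $S$ inside the diagonal coordinate set, that perturbing along $h$ keeps $\Sigma(\epsilon)$ confined to those coordinates, and that the singular values of $X(\epsilon)$ are exactly $|\sigma_{i}(X^{\ast})+\epsilon h_{i}|$; once this structure is established, the strict-concavity contradiction is immediate.
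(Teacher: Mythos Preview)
Your argument is sound and gives a valid proof, but it proceeds along a genuinely different line from the paper. The paper restricts the objective to the $r$-dimensional affine slice parametrized by $z\in\mathcal{R}^{r}$, observes that $z^{\ast}$ is a local minimizer of $g(z)=\|Cz-b\|_{2}^{2}+\lambda P(z)$, and invokes the second-order necessary condition: the Hessian $2C^{T}C-\mathrm{Diag}\big(\lambda a^{2}/(a\sigma_{i}(X^{\ast})+1)^{3}\big)$ must be positive semidefinite, and since the diagonal correction is strictly positive, $C^{T}C$ is forced to be positive definite. Your route bypasses the Hessian entirely: you pick a null direction $h$ of $C$, note that the data term is flat along it, and use the strict concavity of $\rho_{a}$ on $(0,\infty)$ to rule out an interior minimum. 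Your version is more elementary and makes the role of strict concavity of the penalty completely transparent; the paper's version, on the other hand, packages the same information quantitatively (the explicit PSD inequality), which is sometimes useful downstream.

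One slip to fix in your final paragraph: strict concavity gives $C_{\lambda}(X(0))$ strictly \emph{above} the convex combination $tC_{\lambda}(X(\epsilon_{1}))+(1-t)C_{\lambda}(X(\epsilon_{2}))$, not below; it is this that forces $C_{\lambda}(X(0))>\min\{C_{\lambda}(X(\epsilon_{1})),C_{\lambda}(X(\epsilon_{2}))\}$ and contradicts optimality. With that inequality reversed, the contradiction reads correctly.
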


\begin{proof}
By the optimality of $X^{\ast}$ and $X^{\ast}=U^{\ast}\Sigma^{\ast}(V^{\ast})^{T}$, we have
$$\mathcal{A}(X^{\ast})=Avec(X^{\ast})=A(V^{\ast}\otimes U^{\ast})vec(\Sigma^{\ast})=B^{\ast}y^{\ast}$$
where $A=(vec(\mathcal{A}_{1}), vec(\mathcal{A}_{2}),\cdots, vec(\mathcal{A}_{d}))^{T}\in \mathcal{R}^{d\times mn}$,
$B^{\ast}=A(V^{\ast}\otimes U^{\ast})\in \mathcal{R}^{r\times mn}$, $y^{\ast}=vec(\Sigma^{\ast})\in \mathcal{R}^{mn}$,
and $\mathrm{rank}(X^{\ast})=\|y^{\ast}\|_{0}=r$.

Without loss of generality, we assume
$$y^{\ast}=(\sigma_{1}(X^{\ast}), \sigma_{2}(X^{\ast}), \cdots, \sigma_{r}(X^{\ast}), 0,\cdots, 0)^{T},$$
and
$$\sigma_{1}(X^{\ast})\geq\sigma_{2}(X^{\ast})\geq\cdots\geq \sigma_{r}(X^{\ast})>0.$$
Let
$$z^{\ast}=(\sigma_{1}(X^{\ast}), \sigma_{2}(X^{\ast}), \cdots, \sigma_{r}(X^{\ast}))^{T}$$
and $C\in \mathcal{R}^{d\times r}$ be the sub-matrix of $B^{\ast}$, whose columns in matrix $B^{\ast}$ corresponding
to $z^{\ast}$.

Define a function $g:\mathcal{R}^{d\times r}\mapsto \mathcal{R}$ by
\begin{equation}\label{r36}
g(z^{\ast})=\|Cz^{\ast}-b\|_{2}^{2}+\lambda\cdot P(z^{\ast}).
\end{equation}
We have
\begin{equation}\label{37}
\begin{array}{llll}
f(X^{\ast})&=&\displaystyle\|\mathcal{A}(X^{\ast})-b\|_{2}^{2}+\lambda\cdot P(X^{\ast})\\
&=&\displaystyle\|Avec(X^{\ast})-b\|_{2}^{2}+\lambda\cdot P(X^{\ast})\\
&=&\displaystyle\|Avec(U^{\ast}\Sigma^{\ast}(V^{\ast})^{T})-b\|_{2}^{2}+
\lambda\cdot P(U^{\ast}\Sigma^{\ast}(V^{\ast})^{T})\\
&=&\displaystyle\|A(V^{\ast}\otimes U^{\ast})vec(\Sigma^{\ast})-b\|_{2}^{2}+
\lambda\cdot P(\Sigma^{\ast})\\
&=&\displaystyle\|B^{\ast}y^{\ast}-b\|_{2}^{2}+\lambda\cdot P(y^{\ast})\\
&=&\displaystyle\|Cz^{\ast}-b\|_{2}^{2}+\lambda\cdot P(z^{\ast})\\
&=&\displaystyle\mathrm{g}(z^{\ast}).
\end{array}
\end{equation}
Since $\sigma_{i}(X^{\ast})>0$, $i=1, 2, \cdots, r$, the function $g$ is continuously differentiable at $z^{\ast}$.
Moreover, in a neighborhood of $z^{\ast}$,
\begin{equation}\label{r38}
\begin{array}{llll}
g(z^{\ast})=\displaystyle f(X^{\ast})
&\leq&\displaystyle\min\Big\{f(X)|\sigma_{i}(X)=0,i=r+1,r+2,\cdots,m\Big\}\\
&=&\displaystyle\min\Big\{g(z)|z\in \mathcal{R}^{r}\Big\},
\end{array}
\end{equation}
which implies that $z^{\ast}$ is a local minimizer of the function $g$. Hence, the second order necessary condition for
$$\min_{z\in \mathcal{R}^{r}}g(z)$$
holds at $z^{\ast}$.
The second order necessary condition at $z^{\ast}$ gives that the matrix
$$2C^{T}C-\mathrm{Diag}\bigg(\frac{\lambda a^{2}}{(a\sigma_{i}(X^{\ast})+1)^{3}}\bigg), \ \ \ i=1, 2, \cdots, r$$
is positive semi-definite, and the matrix $M=\mathrm{Diag}\Big(\frac{\lambda a^{2}}{(a\sigma_{i}(X^{\ast})+1)^{3}}\Big)$
is positive, the matrix $C^{T}C$ must be positive definite. Hence the columns of $C$ must be linearly independent.\\
This completes the proof.
\end{proof}

\begin{theorem} \label{th6}
Let $X^{\ast}$ be the optimal solution of RTrARMP and $\mathrm{rank}(X^{\ast})=r$. Then the following statements hold.\\

(1) If $\lambda>\|b\|_{2}^{2}$, then
$$\|\sigma(X^{\ast})\|_{\infty}\leq\frac{\|b\|_{2}^{2}}{a(\lambda-\|b\|_{2}^{2})};$$

(2) Denote by $\bar{\lambda}$ the constant
$$\|b\|_{2}^{2}+\frac{\|A^{T}b\|_{2}+\sqrt{\|A^{T}b\|_{2}+2a\|b\|_{2}^{2}\|A^{T}b\|_{2}}}{a}.$$
Then for all $\lambda\geq\bar{\lambda}$, $X^{\ast}=0$.
\end{theorem}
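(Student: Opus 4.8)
The plan is to derive both bounds from the optimality inequality $C_\lambda(X^\ast) \le C_\lambda(0)$, which reads
\[
\|\mathcal{A}(X^\ast)-b\|_2^2 + \lambda\cdot P(X^\ast) \le \|b\|_2^2.
\]
For part (1), first I would discard the (nonnegative) data-fidelity term on the left to get $\lambda\cdot P(X^\ast) \le \|b\|_2^2$. Since $P(X^\ast)=\sum_{i=1}^m \rho_a(\sigma_i(X^\ast))$ and each summand is nonnegative, this forces $\rho_a(\sigma_1(X^\ast)) \le \|b\|_2^2/\lambda$ for the largest singular value. Now I would invert the scalar relation $\rho_a(t)=\frac{at}{at+1}$ on $t\ge 0$: solving $\frac{a t}{a t+1}\le c$ gives $t \le \frac{c}{a(1-c)}$ provided $c<1$. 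Plugging $c=\|b\|_2^2/\lambda$ (which is $<1$ precisely because $\lambda>\|b\|_2^2$) yields $\sigma_1(X^\ast)=\|\sigma(X^\ast)\|_\infty \le \frac{\|b\|_2^2/\lambda}{a(1-\|b\|_2^2/\lambda)} = \frac{\|b\|_2^2}{a(\lambda-\|b\|_2^2)}$, which is exactly the claimed bound.

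For part (2), the strategy is a contradiction argument: assume $X^\ast\ne 0$, so $r=\mathrm{rank}(X^\ast)\ge 1$, and exploit the first-order optimality condition on the reduced smooth problem from Lemma~\ref{lemma5}. In the notation there, $z^\ast=(\sigma_1(X^\ast),\dots,\sigma_r(X^\ast))^T$ is a local minimizer of the differentiable $g(z)=\|Cz-b\|_2^2+\lambda P(z)$, so $\nabla g(z^\ast)=0$, i.e. $2C^T(Cz^\ast-b) + \lambda\cdot\mathrm{Diag}\!\big(\rho_a'(\sigma_i(X^\ast))\big)z^\ast=0$ where $\rho_a'(t)=\frac{a}{(at+1)^2}$. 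Taking the inner product with $z^\ast$ and using $Cz^\ast = \mathcal{A}(X^\ast)$ (a consequence of the block structure $B^\ast y^\ast = \mathcal{A}(X^\ast)$ in Lemma~\ref{lemma5}), I would obtain
\[
\lambda\sum_{i=1}^r \frac{a\,\sigma_i(X^\ast)^2}{(a\sigma_i(X^\ast)+1)^2} = 2\langle b - \mathcal{A}(X^\ast),\, \mathcal{A}(X^\ast)\rangle = 2\langle b-\mathcal{A}(X^\ast), C z^\ast\rangle.
\]
The left side is a lower bound on $\lambda$ times something, but to get a clean contradiction I instead bound it from below by a single term and the right side from above. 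Using part (1) to control $\|\sigma(X^\ast)\|_\infty$ (hence $\|X^\ast\|_F$ and $\|\mathcal{A}(X^\ast)\|_2$), together with $\|\mathcal{A}(X^\ast)-b\|_2^2\le\|b\|_2^2$ from the optimality inequality, one estimates $|\langle b-\mathcal{A}(X^\ast), \mathcal{A}(X^\ast)\rangle| \le \|b-\mathcal{A}(X^\ast)\|_2\,\|\mathcal{A}(X^\ast)\|_2$; and since $\mathcal{A}(X^\ast)=Avec(X^\ast)$ with $\|A^Tb\|_2$ entering via $\langle b, \mathcal{A}(X^\ast)\rangle = \langle A^Tb, vec(X^\ast)\rangle \le \|A^Tb\|_2\|X^\ast\|_F$, the whole right side is controlled by $\|A^Tb\|_2$ and $\|b\|_2$. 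On the left, $\frac{a t^2}{(at+1)^2}$ is increasing in $t$, so I would lower-bound the sum by its largest term $\frac{a\sigma_1^2}{(a\sigma_1+1)^2}$; combining with the value of $\sigma_1$ forced in part (1) gives a lower bound on the left side that grows with $\lambda$, while the right side stays bounded — so for $\lambda\ge\bar\lambda$ the two are incompatible unless $X^\ast=0$. The arithmetic of matching the constants should reproduce exactly the stated $\bar\lambda = \|b\|_2^2 + \frac{\|A^Tb\|_2 + \sqrt{\|A^Tb\|_2 + 2a\|b\|_2^2\|A^Tb\|_2}}{a}$.

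The main obstacle I anticipate is not the inequality chain itself but getting the constant bookkeeping to land on precisely the published $\bar\lambda$: the expression has a $\|A^Tb\|_2 + \sqrt{\cdots}$ shape that strongly suggests a quadratic inequality in some auxiliary quantity (likely in $\lambda - \|b\|_2^2$ or in $a\,\sigma_1(X^\ast)$) is being solved exactly, rather than the cruder "left grows, right bounded" comparison I sketched. So the delicate step is to set up the right scalar quadratic — probably by writing the stationarity relation and the optimality inequality together as two constraints on the pair $(\sigma_1(X^\ast), \lambda)$, eliminating $\sigma_1(X^\ast)$, and reading off the threshold on $\lambda$ from the discriminant. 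I would budget most of the effort there and treat the norm estimates $\|\mathcal{A}(X^\ast)\|_2\le\|\mathcal{A}\|_2\|X^\ast\|_F$, $\|X^\ast\|_F\le\sqrt r\,\|\sigma(X^\ast)\|_\infty$, and $\langle A^Tb, vec(X^\ast)\rangle\le\|A^Tb\|_2\|X^\ast\|_F$ as routine.
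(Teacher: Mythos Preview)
Your argument for part~(1) is correct and essentially identical to the paper's.

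For part~(2) your setup is right --- contradiction via the first-order condition on the reduced problem $g(z)$ from Lemma~\ref{lemma5} --- but the execution diverges from the paper at the decisive step, and your own diagnosis that the sketched inequality chain will not land on the exact $\bar\lambda$ is accurate. Two concrete issues. First, a small slip: the gradient of $P(z)=\sum_i\rho_a(z_i)$ has $i$-th component $\rho_a'(z_i)=\frac{a}{(az_i+1)^2}$, not $\rho_a'(z_i)z_i$; after dotting the stationarity condition with $z^\ast$ the penalty contribution is $\lambda\sum_i \frac{a z_i^\ast}{(a z_i^\ast+1)^2}$, not $\lambda\sum_i \frac{a (z_i^\ast)^2}{(a z_i^\ast+1)^2}$. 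Second, and more importantly, you keep the full term $2\langle b-\mathcal A(X^\ast),\mathcal A(X^\ast)\rangle$ on the right and try to bound it with Cauchy--Schwarz and $\|X^\ast\|_F\le\sqrt r\,\|\sigma(X^\ast)\|_\infty$; this introduces an $r$-dependence and loose constants that will not collapse to the stated $\bar\lambda$.

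The paper's maneuver is cleaner and is what makes the constant come out exactly. From $2(z^\ast)^TC^TCz^\ast-2(z^\ast)^TC^Tb+\lambda\sum_i\frac{az_i^\ast}{(az_i^\ast+1)^2}=0$, use the \emph{positive definiteness} of $C^TC$ established in Lemma~\ref{lemma5} to drop the quadratic term and obtain the strict inequality
\[
\sum_{i=1}^r\Bigl(\frac{\lambda a z_i^\ast}{(az_i^\ast+1)^2}-2(C^Tb)_i z_i^\ast\Bigr)<0.
\]
Now argue \emph{term by term}: part~(1) gives $az_i^\ast+1\le \lambda/(\lambda-\|b\|_2^2)$, hence $\frac{\lambda a}{(az_i^\ast+1)^2}\ge \frac{a(\lambda-\|b\|_2^2)^2}{\lambda}$; and the orthogonality of $V^\ast\otimes U^\ast$ gives $|(C^Tb)_i|\le\|(B^\ast)^Tb\|_2=\|A^Tb\|_2$. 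Thus each summand is nonnegative as soon as $\frac{a(\lambda-\|b\|_2^2)^2}{\lambda}\ge 2\|A^Tb\|_2$, i.e.
\[
a\lambda^2-2\bigl(a\|b\|_2^2+\|A^Tb\|_2\bigr)\lambda+a\|b\|_2^4\ge 0,
\]
whose larger root is exactly the stated $\bar\lambda$. This contradicts the strict inequality above, forcing $X^\ast=0$. So the missing ingredients in your plan are (i) discarding $\|Cz^\ast\|_2^2$ via the second-order information in Lemma~\ref{lemma5}, and (ii) the coordinate-wise comparison $\frac{\lambda a}{(az_i^\ast+1)^2}\ge 2|(C^Tb)_i|$ rather than global norm bounds.
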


\begin{proof}
(1) Let $X^{\ast}$ be the optimal solution of RTrARMP. Then we have
$$f(X^{\ast})=\|\mathcal{A}(X^{\ast})-b\|_{2}^{2}+\lambda\cdot P(X^{\ast})\leq f(0)=\|b\|_{2}^{2}.$$
Hence $\lambda\cdot P(X^{\ast})\leq \|b\|_{2}^{2}$, which implies that
$$\frac{a\|\sigma(X^{\ast})\|_{\infty}}{a\|\sigma(X^{\ast})\|_{\infty}+1}\leq\frac{\|b\|_{2}^{2}}{\lambda}.$$
If $\lambda>\|b\|_{2}^{2}$, then
$$\|\sigma(X^{\ast})\|_{\infty}\leq\frac{\|b\|_{2}^{2}}{a(\lambda-\|b\|_{2}^{2})}.$$

(2) By Lemma 5, the first order necessary condition for
$$\min_{z\in \mathcal{R}^{r}}\mathrm{g}(z)$$
at $z^{\ast}$ gives
\begin{equation}\label{r39}
2C^{T}(Cz^{\ast}-b)+\frac{\lambda a}{(az^{\ast}+1)^{2}}=0.
\end{equation}
Multiplying by $(z^{\ast})^{T}$ both sides of equality above yield
$$2(z^{\ast})^{T}C^{T}Cz^{\ast}-2(z^{\ast})^{T}C^{T}b+(z^{\ast})^{T}\frac{\lambda a}{(az^{\ast}+1)^{2}}=0.$$

Because the columns of $C$ are linearly independent, $C^{T}C$ is positive definite (see the proof of Lemma 5), and hence
$$-2(z^{\ast})^{T}C^{T}b+(z^{\ast})^{T}\frac{\lambda a}{(az^{\ast}+1)^{2}}<0,$$
equivalently,
\begin{equation}\label{r40}
\sum_{i=1}^{r}\bigg(\frac{\lambda az_{i}^{\ast}}{(az_{i}^{\ast}+1)^{2}}-2(C^{T}b)_{i}z_{i}^{\ast}\bigg)<0.
\end{equation}
Since
$$\lambda>\|b\|_{2}^{2}+\frac{\|A^{T}b\|_{2}+\sqrt{\|A^{T}b\|_{2}+2a\|b\|_{2}^{2}\|A^{T}b\|_{2}}}{a},$$
we obtain
\begin{equation}\label{r41}
a\lambda^{2}-2(a\|b\|_{2}^{2}+\|A^{T}b\|_{2})\lambda+a\|b\|_{2}^{4}\geq0,
\end{equation}
which implies that
\begin{equation}\label{r42}
\frac{a(\lambda-\|b\|_{2}^{2})}{\lambda}\geq2\|A^{T}b\|_{2}.
\end{equation}

Together with
$$\frac{\lambda a}{(az_{i}^{\ast}+1)^{2}}\geq\frac{a(\lambda-\|b\|_{2}^{2})}{\lambda}$$
and
\begin{equation}\label{r43}
\begin{array}{llll}
|(C^{T}b)_{i}|&\leq&\displaystyle\|(B^{\ast})^{T}b\|_{\infty}\\
&\leq&\|(B^{\ast})^{T}b\|_{2}\\
&=&\|(V^{\ast}\otimes U^{\ast})^{T}A^{T}b\|_{2}\\
&=&\|A^{T}b\|_{2},
\end{array}
\end{equation}
we obtain that
\begin{equation}\label{r44}
\frac{\lambda a}{(1+az_{i}^{\ast})^{2}}-2|(C^{T}b)_{i}|\geq0.
\end{equation}

Hence, for any $i\in \{1, 2, \cdots, r\}$,
$$\frac{\lambda az_{i}^{\ast}}{(1+az_{i}^{\ast})^{2}}-2((C^{T}b)_{i})z_{i}^{\ast}\geq0,$$
which is a contradiction with (40), as claimed.\\
This completes the proof.
\end{proof}

\section{Convergence analysis for ISVTA} \label{applications-sec}

In this section, we mainly study the convergence of ISVTA to a stationary point of the iteration (30) under some certain conditions.
\begin{theorem} \label{th7}
Let $\{X^{k}\}$ be the sequence generated by the ISVTA with the step size $\mu$ satisfying $0<\mu<\frac{1}{\|\mathcal{A}\|_{2}^{2}}$. Then
\begin{description}
  \item[$\mathrm{1)}$] The sequence $C_{\lambda}(X^{k})$ is decreasing.
  \item[$\mathrm{2)}$] $\{X^{k}\}$ is asymptotically regular, i.e., $\lim_{k\rightarrow\infty}\|X^{k+1}-X^{k}\|_{2}=0$.
  \item[$\mathrm{3)}$] $\{X^{k}\}$ converges to a stationary point of the iteration (30).
\end{description}
\end{theorem}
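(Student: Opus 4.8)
The plan is to mirror the standard convergence analysis for iterative thresholding schemes, exploiting the surrogate functional $C_{\mu}(X,Z)$ and the fact, already recorded in Theorem 5, that minimizing $C_{\mu}(\cdot,Z)$ in $X$ is equivalent to the separable singular-value problem whose solution is $G_{\lambda\mu,P}(B_{\mu}(Z))$. For part 1), I would first show that $C_{\mu}(X^{k+1},X^{k})\le C_{\mu}(X^{k},X^{k})$, which is immediate since $X^{k+1}$ is by construction a minimizer of $X\mapsto C_{\mu}(X,X^{k})$ (via Theorems 3 and 5). Next I would expand the definition of $C_{\mu}(X,Z)$ and use $0<\mu<\|\mathcal{A}\|_{2}^{-2}$ together with the contraction estimate $\|\mathcal{A}(X)-\mathcal{A}(Z)\|_{2}^{2}\le\|\mathcal{A}\|_{2}^{2}\,\|X-Z\|_{F}^{2}$ (exactly as in the proof of Theorem 4) to obtain
\[
\mu C_{\lambda}(X^{k+1}) = C_{\mu}(X^{k+1},X^{k+1}) \le C_{\mu}(X^{k+1},X^{k}) \le C_{\mu}(X^{k},X^{k}) = \mu C_{\lambda}(X^{k}),
\]
so $C_{\lambda}(X^{k})$ is non-increasing. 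Since $C_{\lambda}\ge 0$, the sequence $C_{\lambda}(X^{k})$ converges.

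For part 2), I would sharpen the chain of inequalities in part 1) to extract a quantitative gap. Writing out $C_{\mu}(X^{k},X^{k})-C_{\mu}(X^{k+1},X^{k})$ and using that $\|\mathcal{A}\|_{2}^{2}\mu<1$, one gets a bound of the form
\[
\mu\big(C_{\lambda}(X^{k})-C_{\lambda}(X^{k+1})\big) \ge (1-\mu\|\mathcal{A}\|_{2}^{2})\,\|X^{k+1}-X^{k}\|_{F}^{2},
\]
because the difference $C_{\mu}(X^{k},X^{k})-C_{\mu}(X^{k+1},X^{k})$ contains the positive term $\|X^{k+1}-X^{k}\|_{F}^{2}$ minus $\mu\|\mathcal{A}(X^{k+1})-\mathcal{A}(X^{k})\|_{2}^{2}$, plus the nonnegative quantity $C_{\mu}(X^{k+1},X^{k})-C_{\mu}(X^{k},X^{k+1})$ coming from optimality. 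Telescoping over $k$ and using that $C_{\lambda}(X^{k})$ is bounded below and convergent forces $\sum_{k}\|X^{k+1}-X^{k}\|_{F}^{2}<\infty$, hence $\|X^{k+1}-X^{k}\|_{F}\to 0$ (asymptotic regularity). This also shows $\{X^{k}\}$ is bounded, since $C_{\lambda}(X^{k})\le C_{\lambda}(X^{0})$ controls $P(X^{k})$ and, on the feasible region implicit in the model, the iterates stay in a bounded set.

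For part 3), I would pass to a convergent subsequence $X^{k_{j}}\to X^{\infty}$ (Bolzano--Weierstrass, using boundedness), and use asymptotic regularity to get $X^{k_{j}+1}\to X^{\infty}$ as well. The iteration map $X\mapsto G_{\lambda\mu,P}(B_{\mu}(X))$ is continuous at any point where the surviving singular values of $B_{\mu}(X^{\infty})$ are strictly above $t^{\ast}$ (the thresholding operator $g_{\lambda\mu}$ is continuous off the threshold, and the SVD-based operator inherits this), so taking limits in $X^{k_{j}+1}=G_{\lambda\mu,P}(B_{\mu}(X^{k_{j}}))$ yields $X^{\infty}=G_{\lambda\mu,P}(B_{\mu}(X^{\infty}))$, i.e. $X^{\infty}$ satisfies the fixed-point equation (30), which is precisely the definition of a stationary point. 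I expect the main obstacle to be exactly this continuity/limit-passage step: the singular-value thresholding operator is only piecewise continuous (it jumps at $\sigma_{i}=t^{\ast}$) and the SVD factors $U^{k},V^{k}$ are not unique when singular values coincide, so one must argue that the limiting point is not pathological — e.g. invoke that the threshold set has measure zero, or that along the subsequence the active rank stabilizes (the number of singular values of $B_{\mu}(X^{k_{j}})$ exceeding $t^{\ast}$ is eventually constant), so that on the relevant block $G_{\lambda\mu,P}\circ B_{\mu}$ is genuinely continuous and the fixed-point relation survives the limit.
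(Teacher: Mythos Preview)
Your arguments for parts 1) and 2) are essentially the paper's: the surrogate inequality $C_{\mu}(X^{k+1},X^{k})\le C_{\mu}(X^{k},X^{k})$ combined with $\|\mathcal{A}(X-Z)\|_{2}^{2}\le\|\mathcal{A}\|_{2}^{2}\|X-Z\|_{F}^{2}$ yields monotonicity and the telescoping bound $\mu\big(C_{\lambda}(X^{k})-C_{\lambda}(X^{k+1})\big)\ge(1-\mu\|\mathcal{A}\|_{2}^{2})\|X^{k+1}-X^{k}\|_{F}^{2}$.

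For part 3) the paper takes a different route that avoids precisely the obstacle you flag. Instead of passing to the limit through the (discontinuous) map $G_{\lambda\mu,P}\circ B_{\mu}$, it introduces the gap functional
\[
D_{\lambda,\mu}(X)=T_{\lambda,\mu}(X,X)-\min_{Z}T_{\lambda,\mu}(Z,X),\qquad T_{\lambda,\mu}(Z,X)=\|Z-B_{\mu}(X)\|_{F}^{2}+\lambda\mu P(Z),
\]
so that $D_{\lambda,\mu}(X)=0$ if and only if $X=G_{\lambda\mu,P}(B_{\mu}(X))$. One then shows $D_{\lambda,\mu}(X^{k})\le\mu\big(C_{\lambda}(X^{k})-C_{\lambda}(X^{k+1})\big)\to0$ and passes to the limit in $D_{\lambda,\mu}$. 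The point is that $D_{\lambda,\mu}$ \emph{is} continuous: $T_{\lambda,\mu}(X,X)=\lambda\mu P(X)$ is continuous because $\rho_{a}$ and singular values are, and $X\mapsto\min_{Z}T_{\lambda,\mu}(Z,X)$ is the Moreau envelope of the continuous, lower-bounded function $\lambda\mu P$ evaluated at the continuous point $B_{\mu}(X)$, hence continuous. This buys a clean limit passage with no case analysis at the threshold.

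Your proposed fixes for the discontinuity do not close the gap as stated. That the set $\{\sigma_{i}(B_{\mu}(X))=t^{\ast}\}$ has measure zero is irrelevant for a \emph{specific} limit point $X^{\infty}$; and ``eventual rank stabilization'' would itself need to be proved, which essentially requires knowing already that no singular value of $B_{\mu}(X^{k_{j}})$ accumulates at $t^{\ast}$. The gap-functional device circumvents this entirely by pushing the limit through a continuous scalar quantity rather than through the set-valued thresholding operator.
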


\begin{proof}
1) By the proof of Theorem 5, we have
$$C_{\mu}(X^{k+1}, X^{k})=\min_{X\in \mathcal{R}^{m\times n}} C_{\mu}(X, X^{k}).$$
Combined with the definition of $C_{\lambda}(X)$ and $C_{\mu}(X, Z)$, we have
$$C_{\lambda}(X^{k+1})=\frac{1}{\mu}[C_{\mu}(X^{k+1}, X^{k})-\|X^{k+1}-X^{k}\|_{F}^{2}]+\|\mathcal{A}(X^{k+1})-\mathcal{A}(X^{k})\|_{2}^{2}.$$
Since $0<\mu<\frac{1}{\|\mathcal{A}\|_{2}^{2}}$, we get
\begin{equation}\label{45}
\begin{array}{llll}
C_{\lambda}(X^{k+1})&=&\frac{1}{\mu}[C_{\mu}(X^{k+1}, X^{k})-\|X^{k+1}-X^{k}\|_{F}^{2}]+\|\mathcal{A}(X^{k+1})-\mathcal{A}(X^{k})\|_{2}^{2}\\
&\leq&\frac{1}{\mu}[C_{\mu}(X^{k}, X^{k})-\|X^{k+1}-X^{k}\|_{F}^{2}]+\|\mathcal{A}(X^{k+1})-\mathcal{A}(X^{k})\|_{2}^{2}\\
&=&C_{\lambda}(X^{k})-\frac{1}{\mu}\|X^{k+1}-X^{k}\|_{F}^{2}+\|\mathcal{A}(X^{k+1})-\mathcal{A}(X^{k})\|_{2}^{2}\\
&\leq&C_{\lambda}(X^{k}).
\end{array}
\end{equation}
That is, the sequence $\{X^{k}\}$ is a minimization sequence of function $C_{\lambda}(X)$, and $C_{\lambda}(X^{k+1})\leq C_{\lambda}(X^{k})$ for all $k\geq0$.\\

2) Let $\theta=1-\mu\|\mathcal{A}\|_{2}^{2}$. Then $\theta\in(0, 1)$ and
\begin{equation}\label{46}
\mu\|\mathcal{A}(X^{k+1}-X^{k})\|_{2}^{2}\leq(1-\theta)\|X^{k+1}-X^{k}\|_{F}^{2}.
\end{equation}
By (45), we have
\begin{equation}\label{47}
\frac{1}{\mu}\|X^{k+1}-X^{k}\|_{F}^{2}-\|\mathcal{A}(X^{k+1})-\mathcal{A}(X^{k})\|_{2}^{2}\leq C_{\lambda}(X^{k})-C_{\lambda}(X^{k+1}).
\end{equation}
Combing (46) and (47), we get
\begin{eqnarray*}
\sum_{k=1}^{N}\{\|X^{k+1}-X^{k}\|_{F}^{2}\}&\leq&\frac{1}{\theta}\sum_{k=1}^{N}\{\|X^{k+1}-X^{k}\|_{F}^{2}\}\\
&&-\frac{1}{\theta}\sum_{k=1}^{N}\{\mu\|\mathcal{A}(X^{k+1})-\mathcal{A}(X^{k})\|_{2}^{2}\}\\
&\leq&\frac{\mu}{\theta}\sum_{k=1}^{N}\{C_{\lambda}(X^{k})-C_{\lambda}(X^{k+1})\}\\
&=&\frac{\mu}{\theta}(C_{\lambda}(X^{1})-C_{\lambda}(X^{N+1}))\\
&\leq&\frac{\mu}{\theta}C_{\lambda}(X^{1}).
\end{eqnarray*}
Thus, the series $\sum_{k=1}^{\infty}\|X^{k+1}-X^{k}\|_{F}^{2}$ is convergent, which implies that
$$\|X^{k+1}-X^{k}\|_{F}^{2}\rightarrow 0 \ \ \mathrm{as}\ \ k\rightarrow\infty.$$

3) Denote
$$T_{\lambda, \mu}(Z, X)=\|Z-B_{\mu}(X)\|_{F}^{2}+\lambda\mu\cdot P(Z)$$
and let
$$D_{\lambda, \mu}(X)=T_{\lambda, \mu}(X, X)-\min_{Z\in \mathcal{R}^{m\times n}}T_{\lambda, \mu}(Z, X)$$
(similar to [30] in sparse signal recovery problems). Then
$$D_{\lambda, \mu}(X)\geq 0$$
and by (30), we have
$$D_{\lambda, \mu}(X)=0\ \mathrm{if}\ \mathrm{and}\ \mathrm{only}\ \mathrm{if}\ X=G_{\lambda\mu,P}(B_{\mu}(X)).$$
Assume that $X^{\ast}$ is a limit point of $\{X^{k}\}$, then there exists a subsequence of $\{X^{k}\}$, which is denoted as $\{X^{k_{j}}\}$ such that
$X^{k_{j}}\rightarrow X^{\ast}$ as $j\rightarrow \infty$. Since the iterative scheme $$X^{k_{j+1}}=G_{\lambda\mu,P}(B_{\mu}(X^{k_{j}})),$$
we have
\begin{eqnarray*}
D_{\lambda, \mu}(X^{k_{j}})&=&T_{\lambda, \mu}(X^{k_{j}}, X^{k_{j}})-T_{\lambda, \mu}(X^{k_{j+1}}, X^{k_{j}})\\
&=&\lambda\mu(P_{a}(X^{k_{j}})-P_{a}(X^{k_{j+1}}))-\|X^{k_{j+1}}-X^{k_{j}}\|_{F}^{2}\\
&&+2\langle\mu \mathcal{A}^{\ast}(b-\mathcal{A}(X^{k_{j}})), X^{k_{j+1}}-X^{k_{j}}\rangle,
\end{eqnarray*}
which implies that
\begin{equation}\label{48}
\begin{array}{llll}
\lambda P_{a}(X^{k_{j}})-\lambda P_{a}(X^{k_{j+1}})&=&\frac{1}{\mu}\|X^{k_{j+1}}-X^{k_{j}}\|_{F}^{2}+\frac{1}{\mu}D_{\lambda, \mu}(X^{k_{j}})\\
&&-2\langle \mathcal{A}^{\ast}(b-\mathcal{A}(X^{k_{j}})), X^{k_{j+1}}-X^{k_{j}}\rangle.
\end{array}
\end{equation}
By (48), it follows that
\begin{eqnarray*}
C_{\lambda}(X^{k_{j}})-C_{\lambda}(X^{k_{j+1}})&=&\|\mathcal{A}(X^{k_{j}})-b\|_{2}^{2}+\lambda\cdot P_{a}(X^{k_{j}})-\|\mathcal{A}(X^{k_{j+1}})-b\|_{2}^{2}\\
&&-\lambda P_{a}(X^{k_{j+1}}))\\
&=&\frac{1}{\mu}\|X^{k_{j+1}}-X^{k_{j}}\|_{F}^{2}-\|\mathcal{A}(X^{k_{j}}-X^{k_{j+1}})\|_{2}^{2}\\
&&+\frac{1}{\mu}D_{\lambda, \mu}(X^{k_{j}})\\
&\geq&(\frac{1}{\mu}-\|\mathcal{A}\|_{2}^{2})\|X^{k_{j}}-X^{k_{j+1}}\|_{F}^{2}+\frac{1}{\mu}D_{\lambda, \mu}(X^{k_{j}}).
\end{eqnarray*}
Since $0<\mu<\frac{1}{\|\mathcal{A}\|_{2}^{2}}$, we get
$$D_{\lambda, \mu}(X^{k_{j}})\leq\mu(C_{\lambda}(X^{k_{j}})-C_{\lambda}(X^{k_{j+1}})).$$
Combining the following fact that
$$C_{\lambda}(X^{k_{j}})-C_{\lambda}(X^{k_{j+1}})\rightarrow 0\ \ as\ j\rightarrow\infty,$$
we have
$$D_{\lambda, \mu}(X^{\ast})=0.$$
This implies that the limit point $X^{\ast}$ of the sequence $\{X^{k}\}$ satisfies the equation
$$X^{\ast}=G_{\lambda\mu,P}(B_{\mu}(X^{\ast})).$$
This completes the proof.
\end{proof}

\section{Numerical experiments} \label{applications-sec}
In this section, we first present numerical results of ISVTA for matrix completion problems, and then compare it with some state-of-art methods
(singular value thresholding algorithm (SVTA) and singular value projection algorithm (SVPA) respectively proposed in [11] and [31]) for image
inpainting problems. Numerical experiments on matrix completion problems show that our method performs powerful in finding a low-rank matrix and
the numerical experiments about image inpainting problems show that our algorithm has better performances than SVTA and SVPA. Among all of the
experiments, differing from the Scheme 2, we set $u=u_{0}=5$, and
$$
\lambda^{\ast}=\left\{
            \begin{array}{ll}
              \frac{2\sigma_{r+1}(B_{\mu}(X^{k}))}{a},  & \ \ {\mathrm{if}\ \frac{2\sigma_{r+1}(B_{\mu}(X^{k}))}{a}\leq\frac{1}{a^{2}\mu};} \\
              \frac{(2a\sigma_{r}(B_{\mu}(X^{k}))+1)^{2}}{4a^{2}},  & \ \ {\mathrm{if}\ \frac{2\sigma_{r+1}(B_{\mu}(X^{k}))}{a}>\frac{1}{a^{2}\mu}.}
            \end{array}
          \right.
$$
The experiments are all conducted on a personal computer ( Intel(R) Core (TM) i5-6200U with CPU at 2.30GHz, 8.0 GB RAM under 64-bit Ubuntu system)
with MATLAB 8.0 programming platform (R2012b).

\subsection{Completion of random matrices}
In this subsection, we carry out a series of experiments to demonstrate the performance of the ISVTA. All the experiments here are conducted
by applying our algorithm to a typical ARMP, i.e., random low rank matrix completion problems. We generate $m\times n$ matrices $M$
of rank $r$ as the matrix products of two low rank matrices $M_{1}$ and $M_{2}$ where $M_{1}\in \mathcal{R}^{m\times r}$, $M_{2}\in \mathcal{R}^{r\times n}$
are generated with independent identically distributed Gaussian entries and the matrix $M=M_{1}M_{2}$ has rank at most $r$. The set of observed
entries $\Omega$ is sampled uniformly at random among all sets of cardinality $s$. We denote the following quantities and they help to quantify the difficulty
of the low rank matrix recovery problems 
\begin{itemize}
  \item \emph{Sampling ratio}: $\mathrm{SR}=s/mn$.
  \item \emph{Freedom ration}: $\mathrm{FR}=s/r(m+n-r)$, which is the ratio between the number of sampled entries and the
'true dimensionality' of a $m\times n$ matrix of rank $r$, and it is a good quantity as the information oversampling ratio. 
\end{itemize}
The stopping criterion is usually as following
$$\frac{\|X_{k}-X_{k-1}\|}{\|X_{k}\|}\leq \mathrm{Tol}$$
where $X_{k}$ and $X_{k-1}$ are numerical results from two continuous iterative steps and $\mathrm{Tol}$ is a given small number.
In addition, we measure the accuracy of the generated solution $X_{opt}$ of our algorithms by the relative error ($\mathrm{RE}$)
defined as following
$$\mathrm{RE}=\frac{\|X_{opt}-M\|_{F}}{\|M\|_{F}}.$$
In these experiments, we test ISVTA on random low-rank matrix completion problems with different parameter '$a$',
and set $a=1,3,5,7,30,100$, respectively.
\begin{table}[!hbp]\scriptsize
\centering
\begin{tabular}{|c||l|l|l|l|l|l|}\hline
Problem&\multicolumn{2}{c}{a=1}&\multicolumn{2}{|c}{a=3}&\multicolumn{2}{|c|}{a=5}\\
\hline
(n,\,rank,\,FR)&RE&Time&RE&Time&RE&Time\\
\hline
$(100,\,11,\,1.9240)$&9.99e-05& 2.58& 9.93e-05& 1.82& 9.99e-05& 1.87\\
\hline
$(100,\,12,\,1.7730)$&9.97e-05& 2.23& 9.93e-05& 3.15& 9.99e-05& 3.29\\
\hline
$(100,\,13,\,1.6454)$&9.99e-05& 5.08&  9.99e-05& 2.30& 9.99e-05& 3.47\\
\hline
$(100,\,14,\,1.5361)$&9.96e-05& 3.49& 9.98e-05& 4.01& 9.97e-05& 3.86\\
\hline
$(100,\,15,\,1.4414)$&9.95e-05& 3.76& 9.98e-05& 4.26& 9.98e-05& 4.51\\
\hline
$(100,\,16,\,1.3587)$&9.98e-05& 5.87& 9.97e-05& 6.16& 9.99e-05& 13.06\\
\hline
$(100,\,17,\,1.2858)$&9.97e-05& 6.82& 9.97e-05& 7.41& 9.99e-05& 10.98\\
\hline
$(100,\,18,\,1.2210)$&9.99e-05& 11.43& 9.97e-05& 10.04& 9.99e-05& 11.67\\
\hline
$(100,\,19,\,1.1631)$&9.99e-05& 17.91& 9.99e-05& 17.60& 9.99e-05& 51.31\\
\hline
$(100,\,20,\,1.1111)$&9.99e-05& 35.36& 9.99e-05& 37.01& 9.99e-05& 40.19\\
\hline
$(100,\,21,\,1.0641)$&9.99e-05& 110.24& 9.99e-05& 118.52& 9.99e-05& 111.66\\
\hline
$(100,\,22,\,1.0215)$&---& ---& ---& ---& ---& ---\\
\hline
\end{tabular}
\caption{\scriptsize Numerical results of ISVTA for matrix completion problems with different rank and FR but fixed $n$, SR=0.40.}\label{table1}
\end{table}

\begin{table}[!hbp]\scriptsize
\centering
\begin{tabular}{|c||l|l|l|l|l|l|}\hline
Problem&\multicolumn{2}{c}{a=7}&\multicolumn{2}{|c}{a=30}&\multicolumn{2}{|c|}{a=100}\\
\hline
(n,\,rank,\,FR)&RE&Time&RE&Time&RE&Time\\
\hline
$(100,\,11,\,1.9240)$&9.88e-05& 1.78& 9.91e-05& 1.92& 9.96e-05& 1.74\\
\hline
$(100,\,12,\,1.7730)$&9.96e-05& 2.76& 9.95e-05& 2.05& 9.99e-05& 2.17\\
\hline
$(100,\,13,\,1.6454)$&9.98e-05& 3.96& 9.96e-05& 2.61& 9.97e-05& 3.83\\
\hline
$(100,\,14,\,1.5361)$&9.98e-05& 5.12& 9.99e-05& 4.63& 9.98e-05& 4.02 \\
\hline
$(100,\,15,\,1.4414)$&9.96e-05& 3.88& 9.96e-05& 4.40& 9.99e-05& 4.84\\
\hline
$(100,\,16,\,1.3587)$&9.99e-05& 6.68& 9.98e-05& 5.62& 9.99e-05& 7.58\\
\hline
$(100,\,17,\,1.2858)$&9.99e-05& 11.70& 9.98e-05& 11.74& 9.99e-05& 8.83\\
\hline
$(100,\,18,\,1.2210)$&9.99e-05& 10.31& 9.98e-05& 14.29& 9.99e-05& 29.18\\
\hline
$(100,\,19,\,1.1631)$&9.99e-05& 17.86& 9.99e-05& 45.61& 9.98e-05& 66.40\\
\hline
$(100,\,20,\,1.1111)$&9.99e-05& 42.13& 9.99e-05& 173.38& 1.00e-04& 262.07\\
\hline
$(100,\,21,\,1.0641)$&9.99e-05& 420.64& ---& --- & ---& ---\\
\hline
$(100,\,22,\,1.0215)$&---& ---& ---& ---& ---& ---\\
\hline
\end{tabular}
\caption{\scriptsize Numerical results of ISVTA for matrix completion problems with different rank and FR but fixed $n$, SR=0.40.}\label{table2}
\end{table}

\begin{table} [htbp]
\centering
\begin{tabular}{|c||l|l|l|l|l|l|}\hline
Problem&\multicolumn{2}{c}{a=1}&\multicolumn{2}{|c}{a=3}&\multicolumn{2}{|c|}{a=5}\\
\hline
(n,\,rank,\,FR)&RE&Time&RE&Time&RE&Time\\
\hline
$(100,\,11,\,1.9240)$&9.99e-05& 2.58& 9.87e-05& 1.68& 9.99e-05& 1.87\\
\hline
$(200,\,11,\,3.7392)$&9.92e-05& 2.54& 9.67e-05& 5.89& 9.66e-05& 2.20\\
\hline
$(300,\,11,\,5.5564)$&9.53e-05& 6.37& 9.62e-05& 7.42& 9.88e-05& 6.77\\
\hline
$(400,\,11,\,7.3741)$&9.87e-05& 9.17& 9.16e-05& 9.22& 9.86e-05& 9.99\\
\hline
$(500,\,11,\,9.1920)$&9.15e-05& 11.41& 9.69e-05& 13.52& 9.45e-05& 12.14\\
\hline
$(600,\,11,\,11.0100)$&9.69e-05& 15.52& 9.74e-05& 18.73& 9.83e-05& 15.31\\
\hline
$(700,\,11,\,12.8281)$&9.08e-05& 21.32& 9.15e-05& 24.09& 9.30e-05& 21.27\\
\hline
$(800,\,11,\,14.6461)$&9.94e-05& 30.20& 9.85e-05& 33.95& 9.28e-05& 30.23\\
\hline
$(900,\,11,\,16.4643)$&8.92e-05& 40.34& 9.38e-05& 47.87& 9.64e-05& 39.02\\
\hline
$(1000,\,11,\,18.2824)$&9.30e-05& 57.75& 9.65e-05& 62.60& 9.58e-05& 55.77\\
\hline
$(1100,\,11,\,20.1005)$&9.29e-05& 74.22& 9.20e-05& 81.80& 9.30e-05& 72.30\\
\hline
$(1200,\,11,\,21.9186)$& 9.20e-05& 90.74& 9.26e-05& 100.01& 9.97e-05& 87.63\\
\hline
\end{tabular}
\caption{\scriptsize Numerical results of ISVTA for matrix completion problems with different $n$
and FR but fixed rank, SR=0.40.}\label{table3}
\end{table}

\begin{table}[htbp]
\centering
\begin{tabular}{|c||l|l|l|l|l|l|}\hline
Problem&\multicolumn{2}{|c}{a=7}&\multicolumn{2}{|c}{a=30}&\multicolumn{2}{|c|}{a=100}\\
\hline
(n,\,rank,\,FR)&RE&Time&RE&Time&RE&Time\\
\hline
$(100,\,11,\,1.9240)$&9.88e-05& 1.78& 9.91-05& 1.92& 9.96e-05& 1.74\\
\hline
$(200,\,11,\,3.7392)$&9.89e-05& 2.27& 9.75e-05& 2.04& 9.97e-05& 1.90\\
\hline
$(300,\,11,\,5.5564)$&9.49e-05& 5.94&  9.86e-05& 6.31& 9.93e-05& 5.51\\
\hline
$(400,\,11,\,7.3741)$&9.88e-05& 8.97& 9.46e-05& 8.24& 9.86e-05& 7.75\\
\hline
$(500,\,11,\,9.1920)$&9.99e-05& 11.47&  9.27e-05& 11.04& 9.37e-05& 11.65\\
\hline
$(600,\,11,\,11.0100)$&9.33e-05& 15.96& 9.56e-05& 15.38& 9.96e-05& 12.63\\
\hline
$(700,\,11,\,12.8281)$&9.68e-05& 20.72& 9.22e-05& 20.24& 9.26e-05& 20.22\\
\hline
$(800,\,11,\,14.6461)$&9.24e-05& 30.57& 9.30e-05& 27.85& 9.44e-05& 27.69\\
\hline
$(900,\,11,\,16.4643)$&9.53e-05& 38.21& 9.89e-05& 38.57& 9.96e-05& 37.80\\
\hline
$(1000,\,11,\,18.2824)$&9.28e-05& 54.28& 9.62e-05& 53.71& 9.11e-05& 54.92\\
\hline
$(1100,\,11,\,20.1005)$&9.34e-05& 72.29& 8.98e-05& 77.04& 8.72e-05& 70.93\\
\hline
$(1200,\,11,\,21.9186)$&9.85e-05& 87.62& 9.17e-05& 87.84& 9.64e-05& 86.26\\
\hline
\end{tabular}
\caption{\scriptsize Numerical results of ISVTA for matrix completion problems with different $n$
and FR but fixed rank, SR=0.40.}\label{table4}
\end{table}
Table 1, 2 report the numerical results of ISVTA for the random low-rank matrix completion problems with $\mathrm{SR}=0.40$ when we fix $n=100$
and vary the rank of the matrix $M$ from $11$ to $22$ with step size $1$. Table 3, 4 present the numerical results of ISVTA in the case
where the rank is fixed to $11$ and $n$ is varied from $100$ to $1200$ with step size $100$. By the performances of ISVTA for completion
of random low rank matrices compared with different $a$ and $\mathrm{FR}$. Table 1, 2, 3, 4 show that for known rank scheme, our method performs
powerful in finding a low-rank matrix, and $a=1$ is the optimal strategy when $\mathrm{FR}$ is close to 1.

\subsection{Image inpainting}
In this subsection, we demonstrate performances of ISVTA on image inpainting problems. The ISVTA is tested on some medical grace images
($255\times192$ Brain angiography image (BAI), $395\times549$ Hand angiography image (HAI) and $419\times400$ Intracranial venous image (IVI)).
We use the SVD to obtain their approximated low-rank images with rank $r=30, 40, 30$, respectively. Numerical results of ISVTA for theses 
low-rank image inpainting problems are reported in Table 5, 6, 7, 8.

\begin{figure}[htbp]
  \centering
  \begin{minipage}[t]{0.4\linewidth}
  \centering
  \includegraphics[width=1\textwidth]{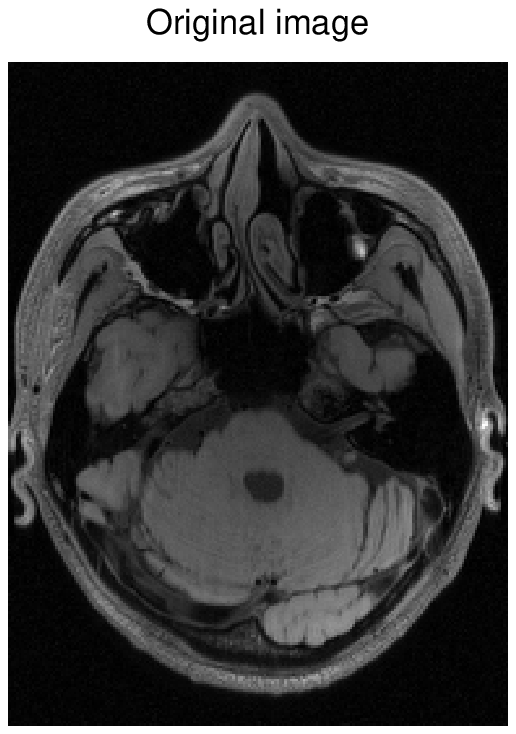}
  \end{minipage}
  \begin{minipage}[t]{0.4\linewidth}
  \centering
  \includegraphics[width=1\textwidth]{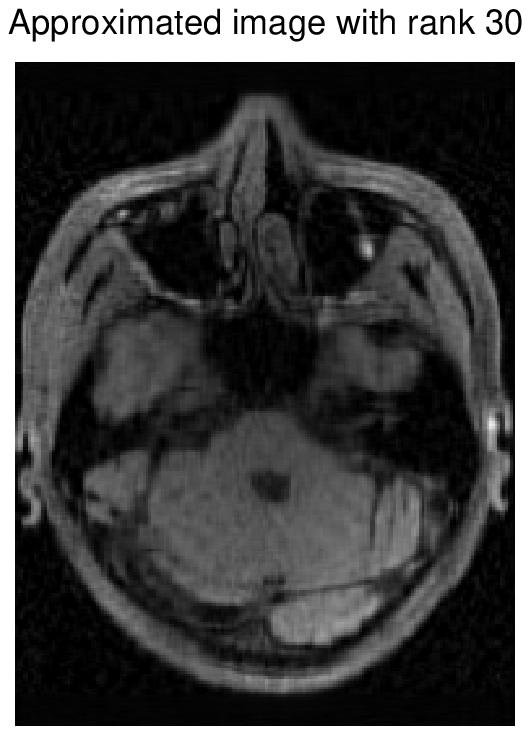}
  \end{minipage}
  \caption{Original $255\times192$ BAI and its approximation with rank 30.} \label{figure3}
\end{figure}

\begin{figure}[h]
  \centering
  \begin{minipage}[t]{0.4\linewidth}
  \centering
  \includegraphics[width=1\textwidth]{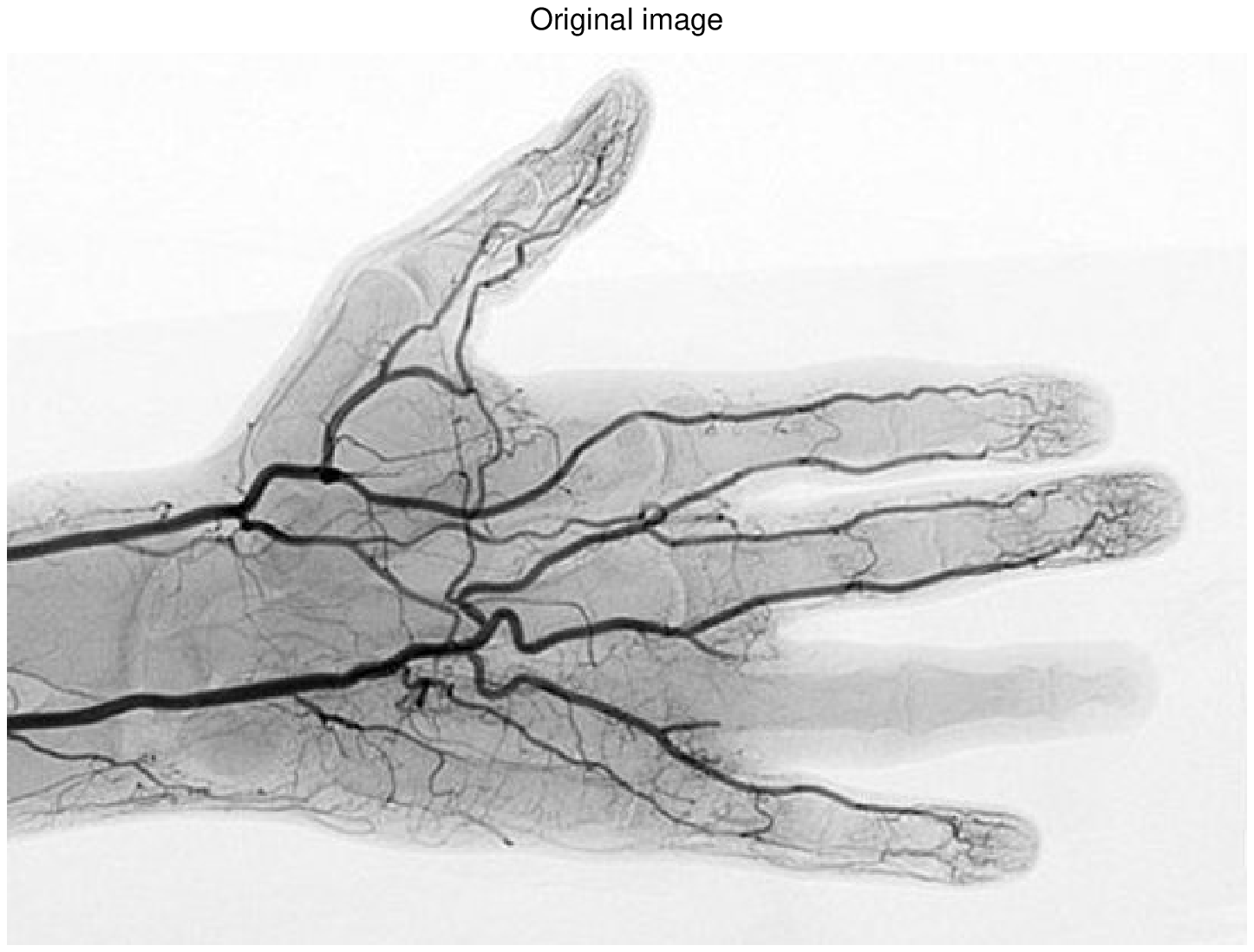}
  \end{minipage}
  \begin{minipage}[t]{0.4\linewidth}
  \centering
  \includegraphics[width=1\textwidth]{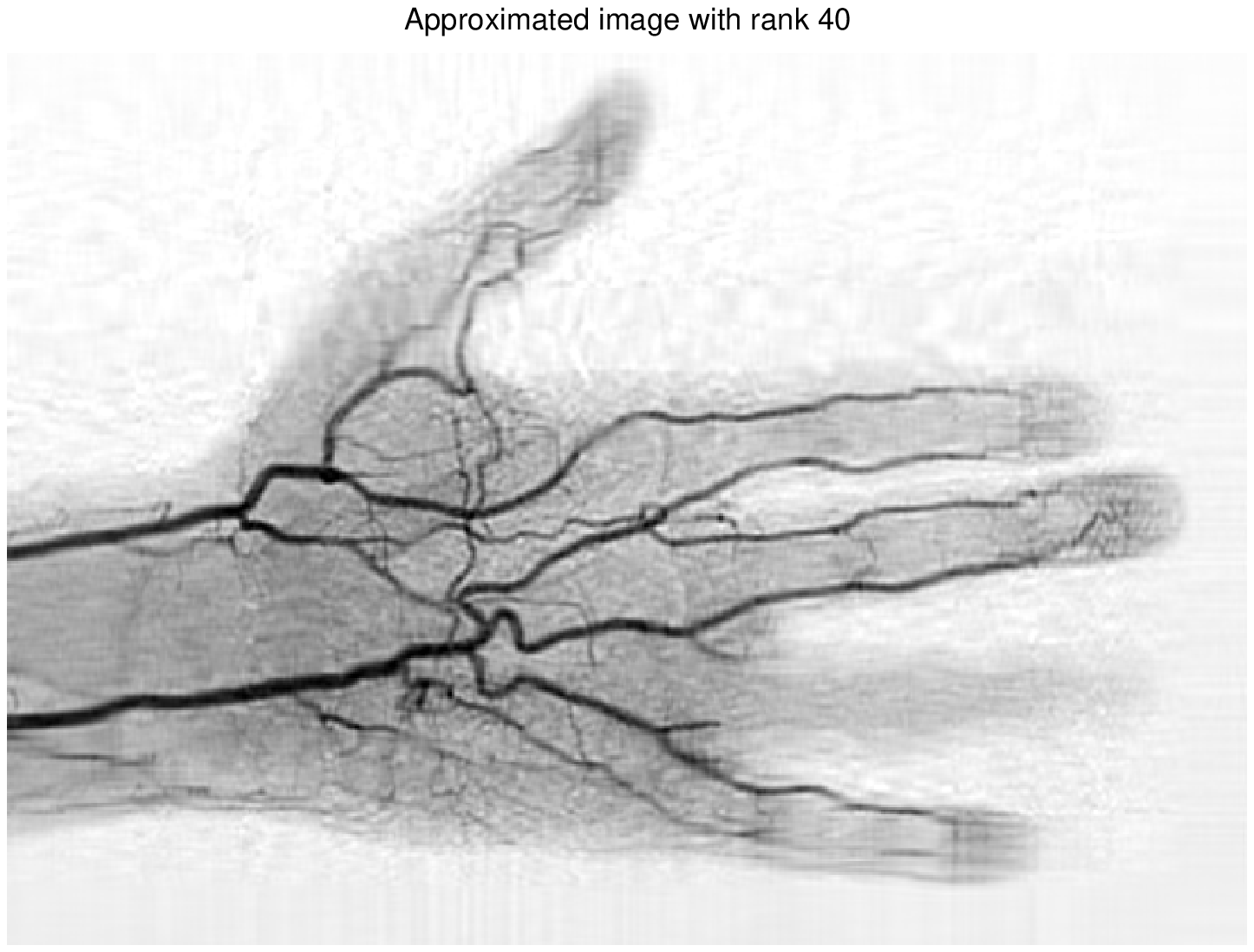}
  \end{minipage}
  \caption{Original $395\times549$ HAI and its approximation with rank 40.} \label{figure4}
\end{figure}

\begin{figure}[h]
  \centering
  \begin{minipage}[t]{0.4\linewidth}
  \centering
  \includegraphics[width=1\textwidth]{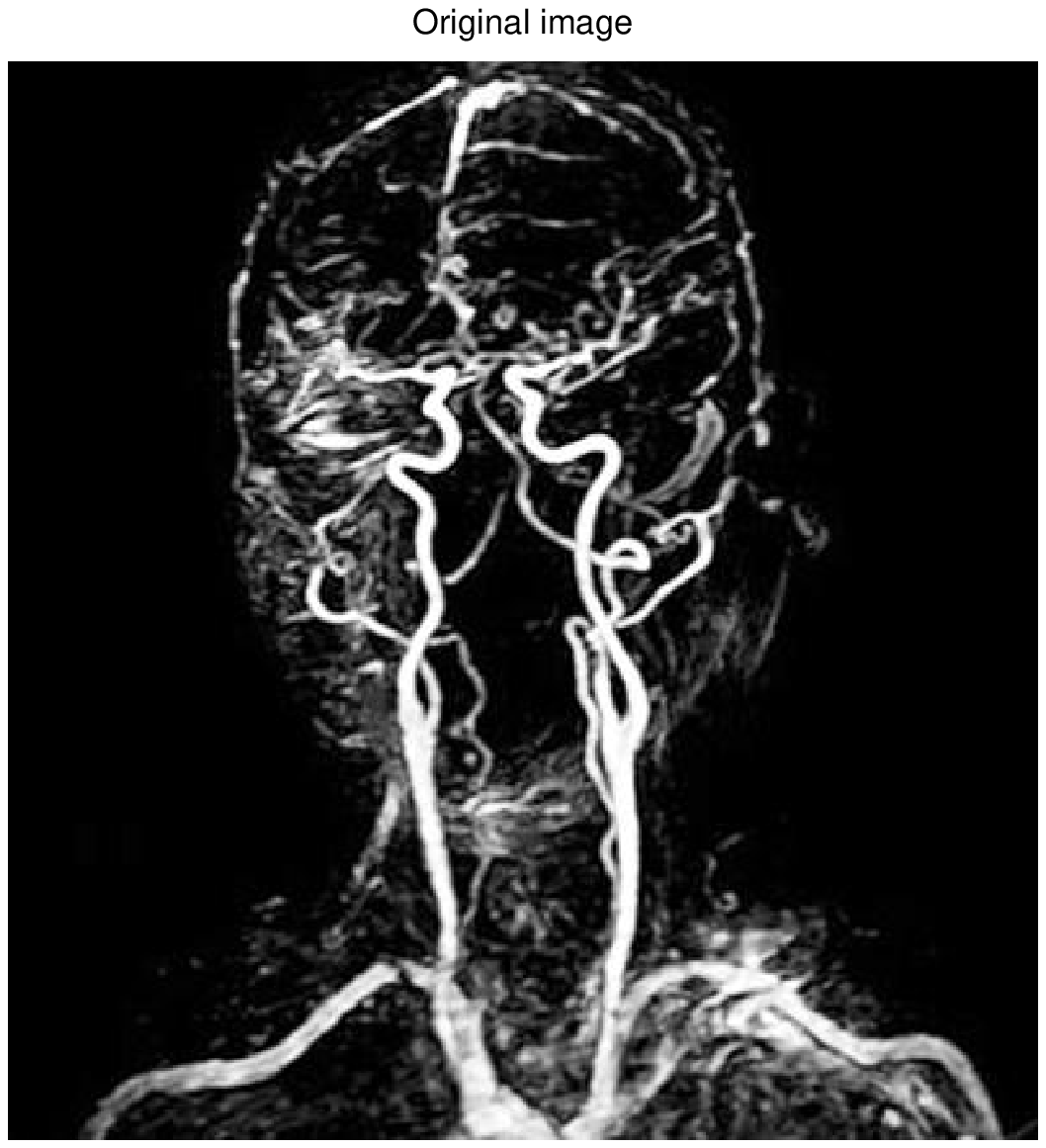}
  \end{minipage}
  \begin{minipage}[t]{0.4\linewidth}
  \centering
  \includegraphics[width=1\textwidth]{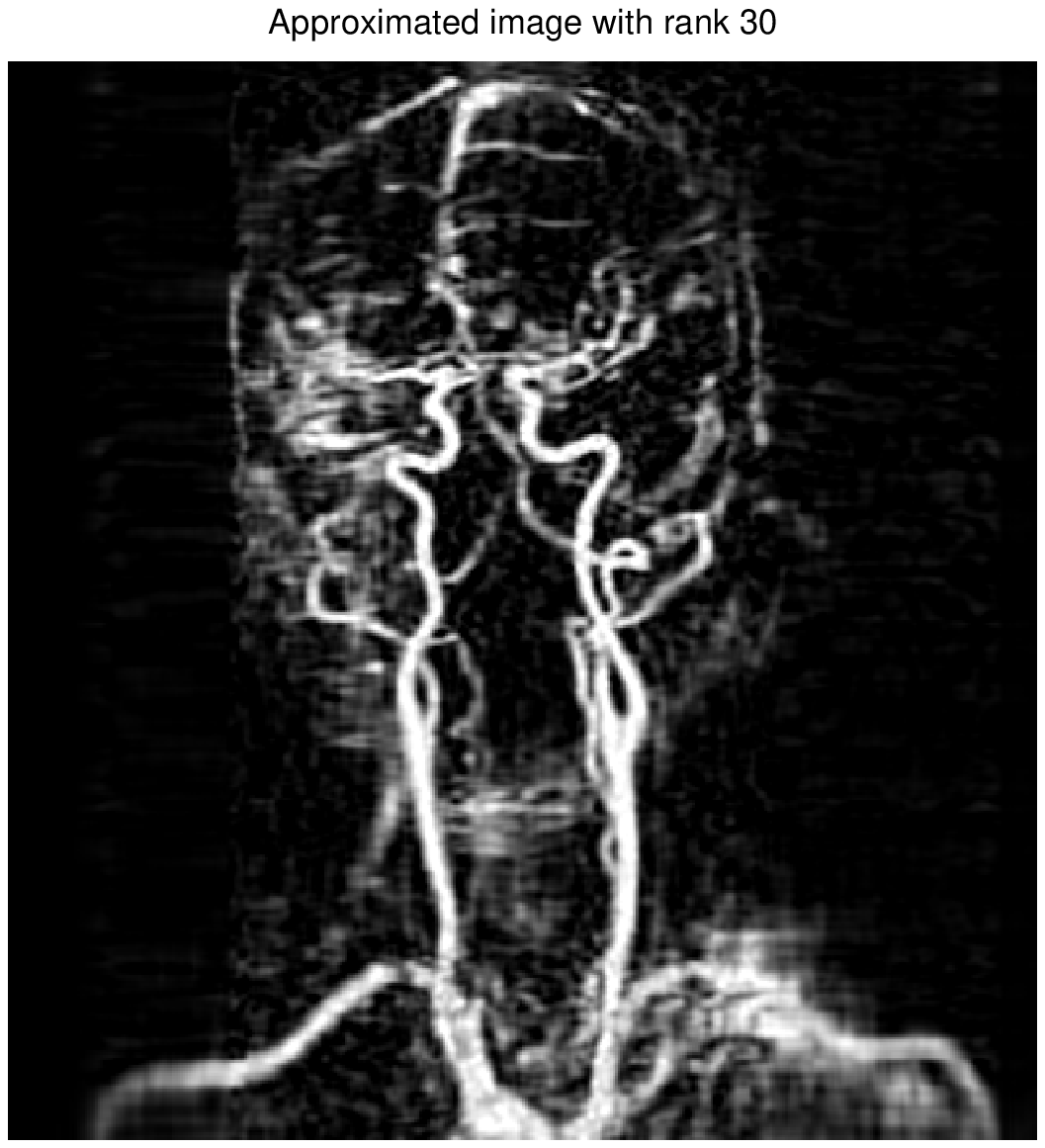}
  \end{minipage}
  \caption{Original $419\times400$ IVI and its approximation with rank 30.} \label{figure5}
\end{figure}

Table 5, 6 show that ISVTA performs powerful in finding a low-rank matrix on image inpainting problems. Indeed, we could get an exact
low-rank image by the ISVTA by choosing proper $a$. Moreover, it is necessary to point out that our method does not work well for all $a>0$,
and we can find that $a=100$ is not a good strategy for the low-rank IVI either $\mathrm{SR}=0.40$ or $\mathrm{SR}=0.50$. The numerical results
of ISVT, SVTA and SVPA compared in Table 5, 6, 7, 8, 9, 10 under same circumstance show that the ISVT algorithm performs far more better than
ISTA and SVPA on image inpainting problems for some proper $a>0$.

\begin{table}[htbp]
\centering
\begin{tabular}{|c||l|l|l|l|l|l|l|}\hline
\multicolumn{7}{|c|}{SR=0.50}\\\hline
Image&\multicolumn{2}{c}{a=1}&\multicolumn{2}{|c}{a=3}&\multicolumn{2}{|c|}{a=5}\\
\hline
(Name,\,rank,\,FR)&RE&Time&RE&Time&RE&Time\\
\hline
(BAI, 30, 1.9568)& 9.99e-05& 10.62& 9.93e-05& 10.83& 9.95e-05& 11.30\\
\hline
(HAI, 40, 2.9985)& 9.96e-05& 38.99& 9.83e-05& 44.77& 9.87e-05& 54.01\\
\hline
(IVI, 30, 3.5403)& 9.96e-05& 37.26& 9.94e-05& 41.73& 9.97e-05& 68.64\\
\hline
\multicolumn{7}{|c|}{SR=0.50}\\\hline
Image&\multicolumn{2}{c}{a=7}&\multicolumn{2}{|c}{a=30}&\multicolumn{2}{|c|}{a=100}\\
\hline
(Name,\,rank,\,FR)&RE&Time&RE&Time&RE&Time\\
\hline
(BAI, 30, 1.9568)& 9.93e-05& 11.20& 9.91e-05& 9.93& 9.97e-05& 19.66\\
\hline
(HAI, 40, 2.9985)& 9.96e-05& 76.77& 9.99e-05& 227.39& 5.10e-02& 283.21\\
\hline
(IVI, 30, 3.5403)& 9.91e-05& 42.93& 9.93e-05& 56.27& 9.96e-05& 37.02\\
\hline
\end{tabular}
\caption{\scriptsize Numerical results of ISVTA with a=1, 3, 5, 7, 30, 100 for image inpainting problems}\label{table5}
\end{table}

\begin{table}[htbp]
\centering
\begin{tabular}{|c||l|l|l|l|l|l|l|}\hline
\multicolumn{7}{|c|}{SR=0.40}\\\hline
Image&\multicolumn{2}{c}{a=1}&\multicolumn{2}{|c}{a=3}&\multicolumn{2}{|c|}{a=5}\\
\hline
(Name,\,rank,\,FR)&RE&Time&RE&Time&RE&Time\\
\hline
(BAI, 30, 1.5655)& 9.98e-05& 27.85& 9.98e-05& 27.87& 9.98e-05& 31.11\\
\hline
(HAI, 40, 2.3988)& 9.96e-05& 89.43& 9.99e-05& 107.73&  9.96e-05& 123.44\\
\hline
(IVI, 30, 2.8323)& 9.97e-05& 73.25& 9.96e-05& 91.02& 9.98e-05& 91.21\\
\hline
\multicolumn{7}{|c|}{SR=0.40}\\\hline
Image&\multicolumn{2}{c}{a=7}&\multicolumn{2}{|c}{a=30}&\multicolumn{2}{|c|}{a=100}\\
\hline
(Name,\,rank,\,FR)&RE&Time&RE&Time&RE&Time\\
\hline
(BAI, 30, 1.5655)& 9.97e-05& 29.36& 9.97e-05& 32.41& 9.97e-05& 59.47\\
\hline
(HAI, 40, 2.3988)& 9.98e-05& 122.00& 9.94e-05& 327.87& 8.90e-02& 266.88\\
\hline
(IVI, 30, 2.8323)& 9.98e-05& 98.29& 9.96e-05& 98.49& 9.96e-05& 98.58\\
\hline
\end{tabular}
\caption{\scriptsize Numerical results of ISVTA with a=1, 3, 5, 7, 30, 100 for image inpainting problems}\label{table6}
\end{table}

\begin{table}[htbp]
\centering
\begin{tabular}{|c||l|l|l|l|l|l|l|}\hline
\multicolumn{7}{|c|}{SR=0.35}\\\hline
Image&\multicolumn{2}{c}{a=1}&\multicolumn{2}{|c}{a=3}&\multicolumn{2}{|c|}{a=5}\\
\hline
(Name,\,rank,\,FR)&RE&Time&RE&Time&RE&Time\\
\hline
(BAI, 30, 1.3698)& 9.99e-04& 53.40& 9.99e-04& 46.55& 9.99e-04& 58.41\\
\hline
(HAI, 40, 2.0990)& 9.99e-04& 91.03& 9.98e-04& 86.91& 9.97e-04& 102.03\\
\hline
(IVI, 30, 2.4782)& 9.98e-04& 104.57& 9.99e-04& 90.67& 9.99e-04& 94.38\\
\hline
\multicolumn{7}{|c|}{SR=0.35}\\\hline
Image&\multicolumn{2}{c}{a=7}&\multicolumn{2}{|c}{a=30}&\multicolumn{2}{|c|}{a=100}\\
\hline
(Name,\,rank,\,FR)&RE&Time&RE&Time&RE&Time\\
\hline
(BAI, 30, 1.3698)& 9.99e-04& 45.16& 9.99e-04& 54.42& 2.68e-01& 73.19 \\
\hline
(HAI, 40, 2.0990)& 9.97e-04& 136.50& 9.98e-04& 348.71& 7.49e-02& 443.80\\
\hline
(IVI, 30, 3.5403)& 9.97e-04& 83.67& 9.97e-04& 107.98& 9.98e-04& 134.67\\
\hline
\end{tabular}
\caption{\scriptsize Numerical results of ISVTA with a=1, 3, 5, 7, 30, 100 for image inpainting problems}\label{table7}
\end{table}

\begin{table}[htbp]
\centering
\begin{tabular}{|c||l|l|l|l|l|l|l|}\hline
\multicolumn{7}{|c|}{SR=0.30}\\\hline
Image&\multicolumn{2}{c}{a=1}&\multicolumn{2}{|c}{a=3}&\multicolumn{2}{|c|}{a=5}\\
\hline
(Name,\,rank,\,FR)&RE&Time&RE&Time&RE&Time\\
\hline
(BAI, 30, 1.1741)& 1.91e-01& 72.77& 3.97e-01&  71.37& 3.53e-01 & 71.52 \\
\hline
(HAI, 40,  1.7991)& 9.99e-04& 189.38& 9.98e-04& 202.80& 9.98e-04& 223.22\\
\hline
(IVI, 30, 2.1242)& 9.99e-04& 187.15& 9.99e-04& 184.93& 9.99e-04& 184.69\\
\hline
\multicolumn{7}{|c|}{SR=0.30}\\\hline
Image&\multicolumn{2}{c}{a=7}&\multicolumn{2}{|c}{a=30}&\multicolumn{2}{|c|}{a=100}\\
\hline
(Name,\,rank,\,FR)&RE&Time&RE&Time&RE&Time\\
\hline
(BAI, 30, 1.1741)& 3.18e-01& 71.84& 2.77e-01 & 71.55& 4.03e-01& 71.33\\
\hline
(HAI, 40, 1.7991)& 9.99e-04& 322.45& 1.53e-02& 447.79& 3.54e-01& 437.48\\
\hline
(IVI, 30, 2.1242)& 9.99e-04& 228.20& 1.20e-03& 320.00& 5.10e-03& 322.57\\
\hline
\end{tabular}
\caption{\scriptsize Numerical results of ISVTA with a=1, 3, 5, 7, 30, 100 for image inpainting problems}\label{table8}
\end{table}

\begin{table}[htbp]
\centering
\begin{tabular}{|c||l|l|l|l|l|l|l|}\hline
\multicolumn{7}{|c|}{SVTA for image inpainting}\\\hline
Image&\multicolumn{2}{c}{SR=0.5}&\multicolumn{2}{|c}{SR=0.4}&\multicolumn{2}{|c|}{SR=0.35}\\
\hline
(Name,\,rank)&RE&Time&RE&Time&RE&Time\\
\hline
(BAI, 30)& 3.42e-02& 75.01& 1.21e-01& 73.79& 1.75e-01& 73.42\\
\hline
(HAI, 40)& 1.90e-03& 456.63& 2.11e-02& 459.69& 3.36e-02& 444.90\\
\hline
(IVI, 30)& 9.99e-04& 46.76& 6.97e-02& 348.90& 1.40e-01& 346.91\\
\hline
\end{tabular}
\caption{\scriptsize Numerical results of SVTA with SR=0.5, 0.4, 0.35 for image inpainting problems}\label{table9}
\end{table}

\begin{table}[htbp]
\centering
\begin{tabular}{|c||l|l|l|l|l|l|l|}\hline
\multicolumn{7}{|c|}{SVPA for image inpainting}\\
\hline
Image&\multicolumn{2}{c}{SR=0.5}&\multicolumn{2}{|c}{SR=0.4}&\multicolumn{2}{|c|}{SR=0.35}\\
\hline
(Name,\,rank)&RE&Time&RE&Time&RE&Time\\
\hline
(BAI, 30)& 6.24e-01& 73.96& 7.18e-01& 71.26& 7.63e-01& 71.31\\
\hline
(HAI, 40)& 7.01e-01& 439.22& 7.71e-01& 429.31& 8.03e-01& 422.59\\
\hline
(IVI, 30)& 6.42e-01& 327.01& 7.42e-01& 314.19& 7.77e-01& 313.49\\
\hline
\end{tabular}
\caption{\scriptsize Numerical results of SVPA with SR=0.5, 0.4, 0.35 for image inpainting problems}\label{table10}
\end{table}

\begin{figure}
  \centering
  \begin{minipage}[t]{0.4\linewidth}
  \centering
  \includegraphics[width=1\textwidth]{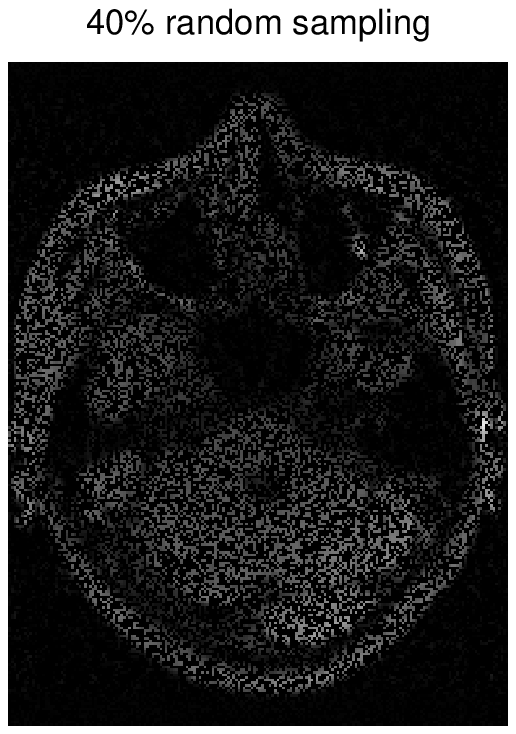}
  \end{minipage}\\
  \begin{minipage}[t]{0.4\linewidth}
  \centering
  \includegraphics[width=1\textwidth]{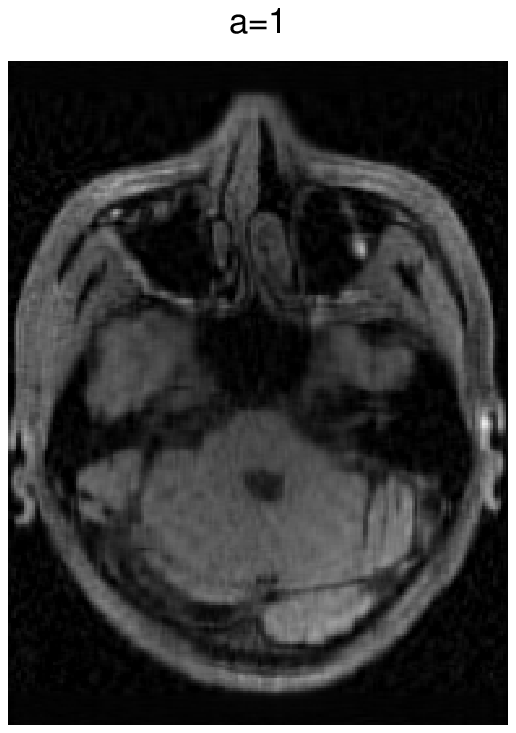}
  \end{minipage}
  \begin{minipage}[t]{0.4\linewidth}
  \centering
  \includegraphics[width=1\textwidth]{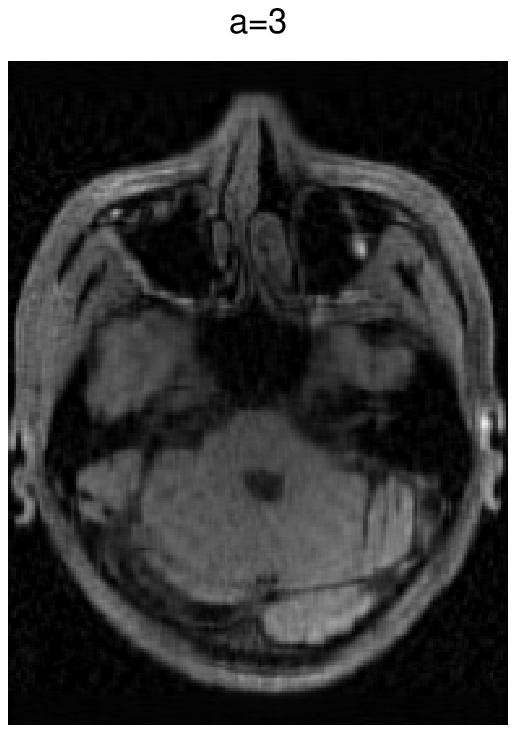}
  \end{minipage}
  \begin{minipage}[t]{0.4\linewidth}
  \centering
  \includegraphics[width=1\textwidth]{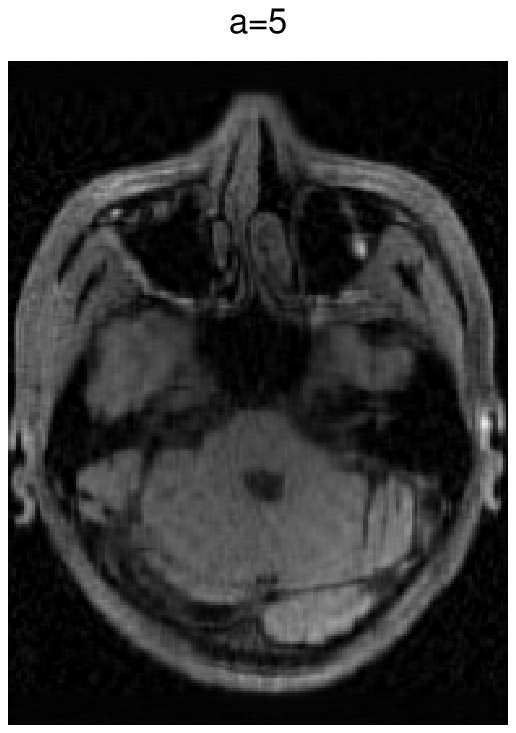}
  \end{minipage}
  \begin{minipage}[t]{0.4\linewidth}
  \centering
  \includegraphics[width=1\textwidth]{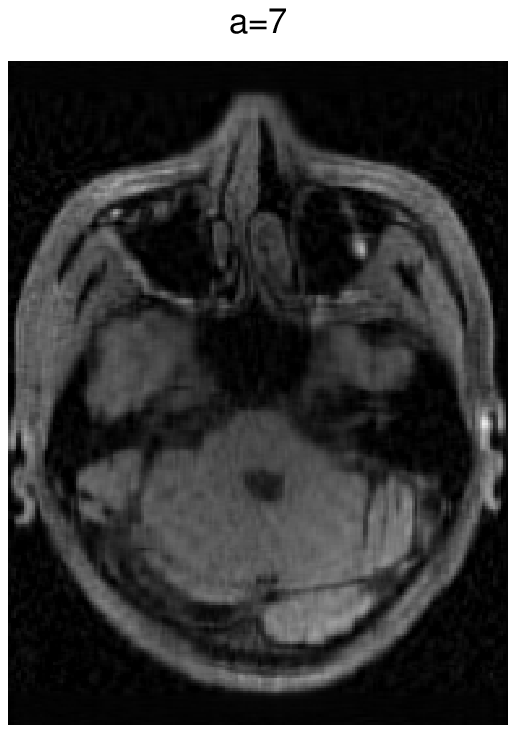}
  \end{minipage}
  \begin{minipage}[t]{0.4\linewidth}
  \centering
  \includegraphics[width=1\textwidth]{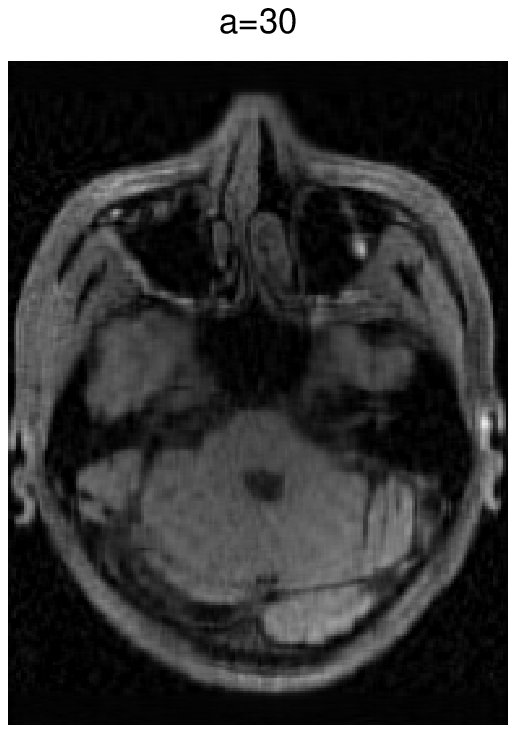}
  \end{minipage}
  \begin{minipage}[t]{0.4\linewidth}
  \centering
  \includegraphics[width=1\textwidth]{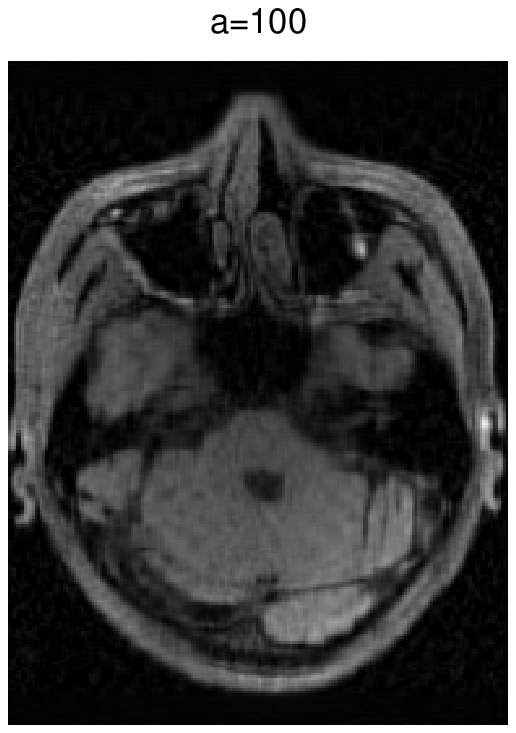}
  \end{minipage}
  \begin{minipage}[t]{0.4\linewidth}
  \centering
  \includegraphics[width=1\textwidth]{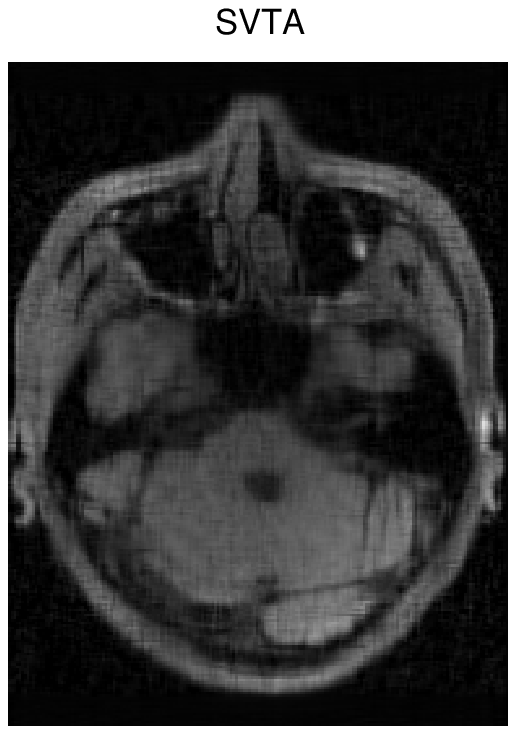}
  \end{minipage}
  \begin{minipage}[t]{0.4\linewidth}
  \centering
  \includegraphics[width=1\textwidth]{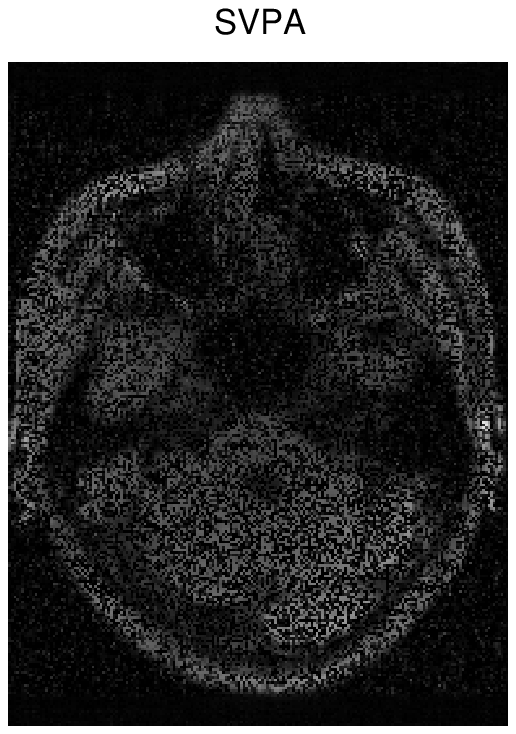}
  \end{minipage}
  \caption{Comparisons of ISVTA, SVTA and SVPA for recovering the approximated low-rank BAI with SR=0.40.} \label{figure6}
\end{figure}

\begin{figure}
  \centering
  \begin{minipage}[t]{0.4\linewidth}
  \centering
  \includegraphics[width=1\textwidth]{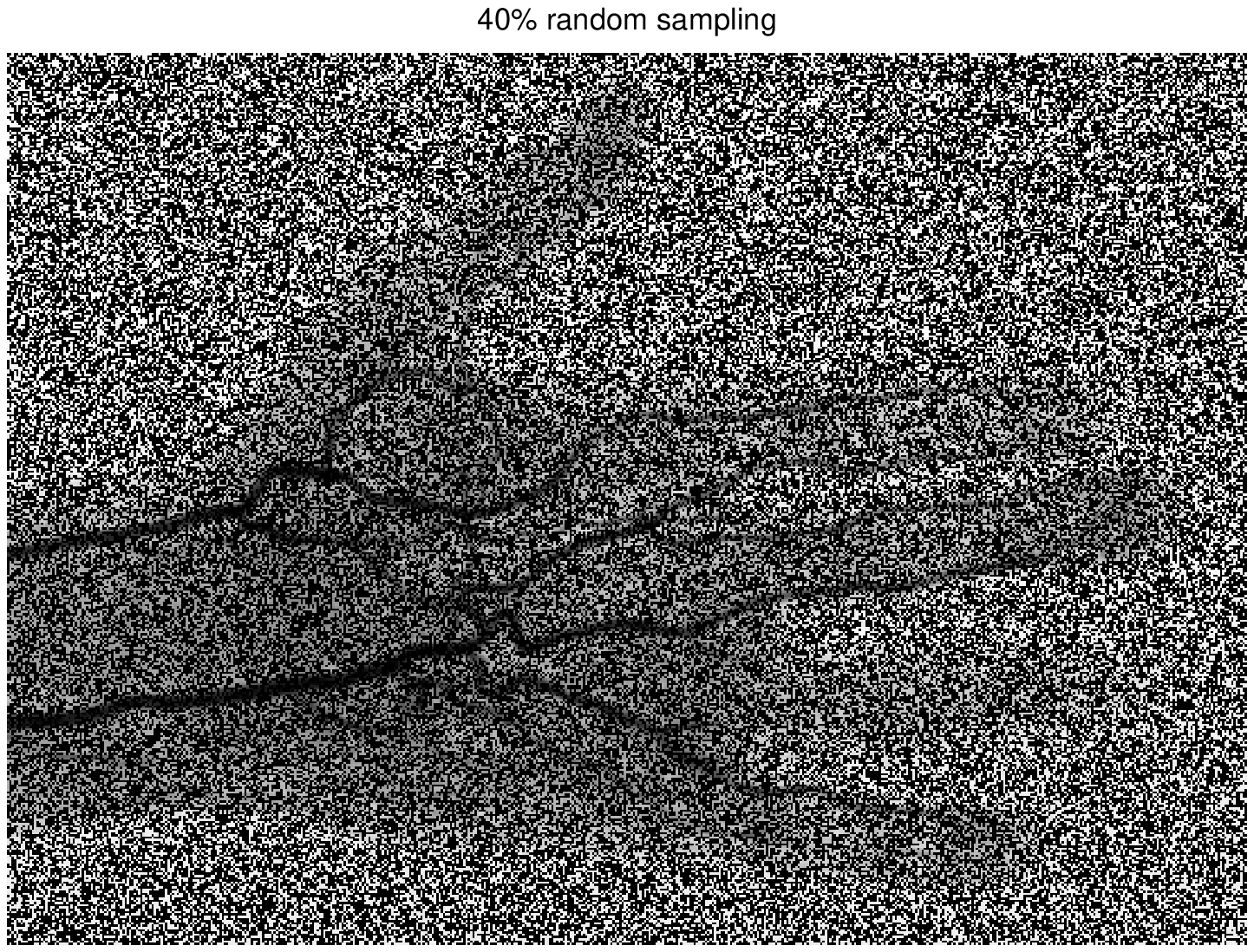}
  \end{minipage}\\
  \begin{minipage}[t]{0.4\linewidth}
  \centering
  \includegraphics[width=1\textwidth]{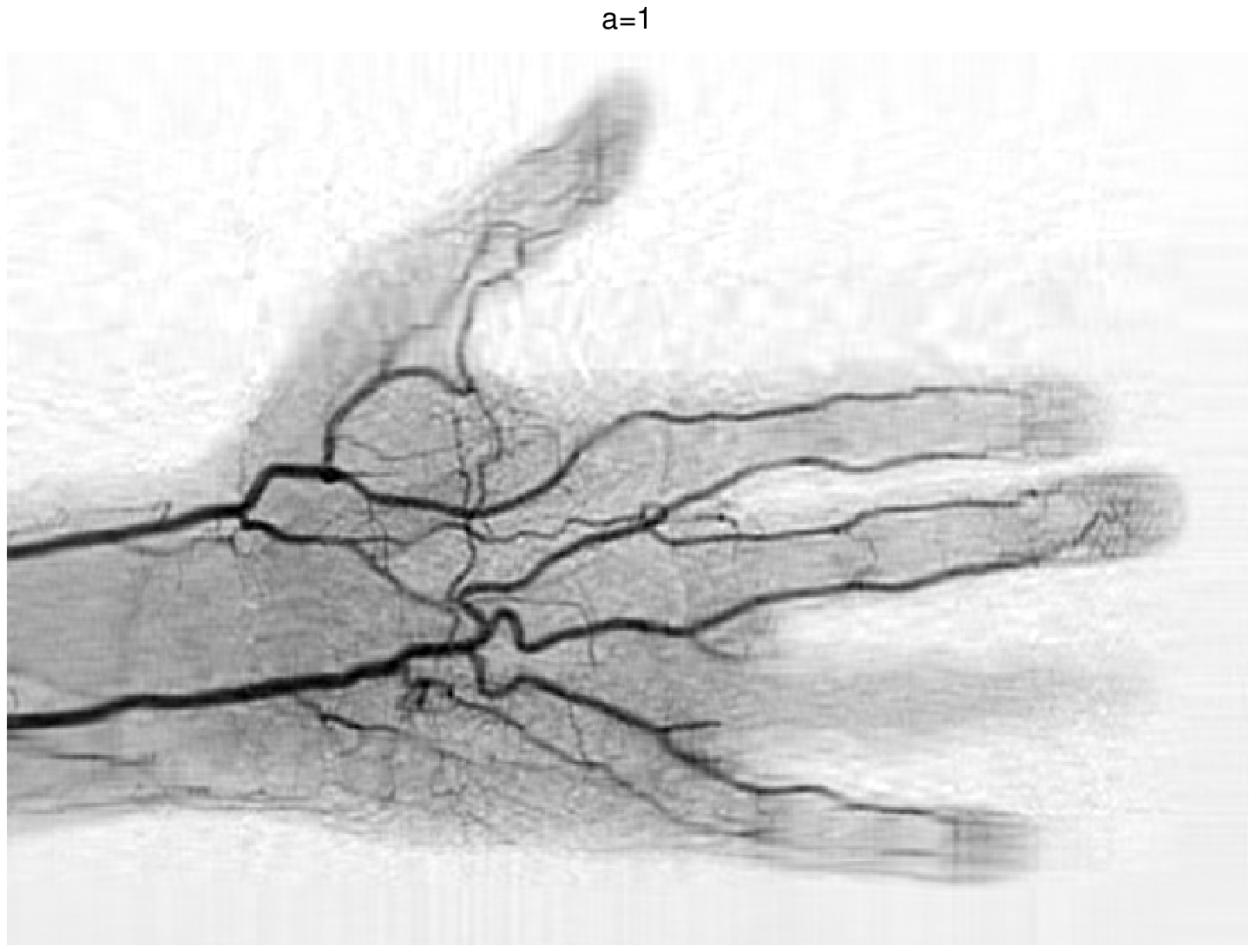}
  \end{minipage}
  \begin{minipage}[t]{0.4\linewidth}
  \centering
  \includegraphics[width=1\textwidth]{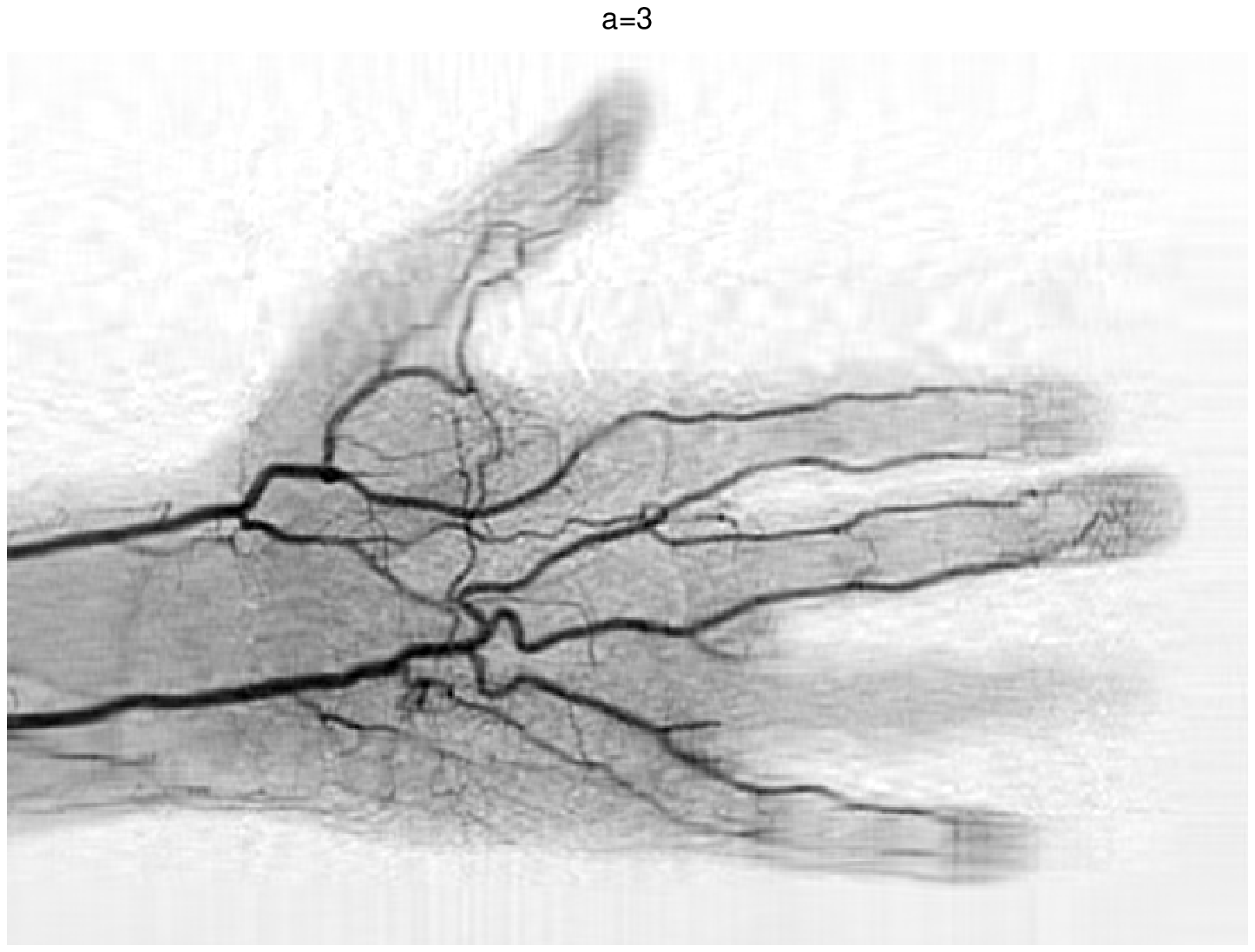}
  \end{minipage}
  \begin{minipage}[t]{0.4\linewidth}
  \centering
  \includegraphics[width=1\textwidth]{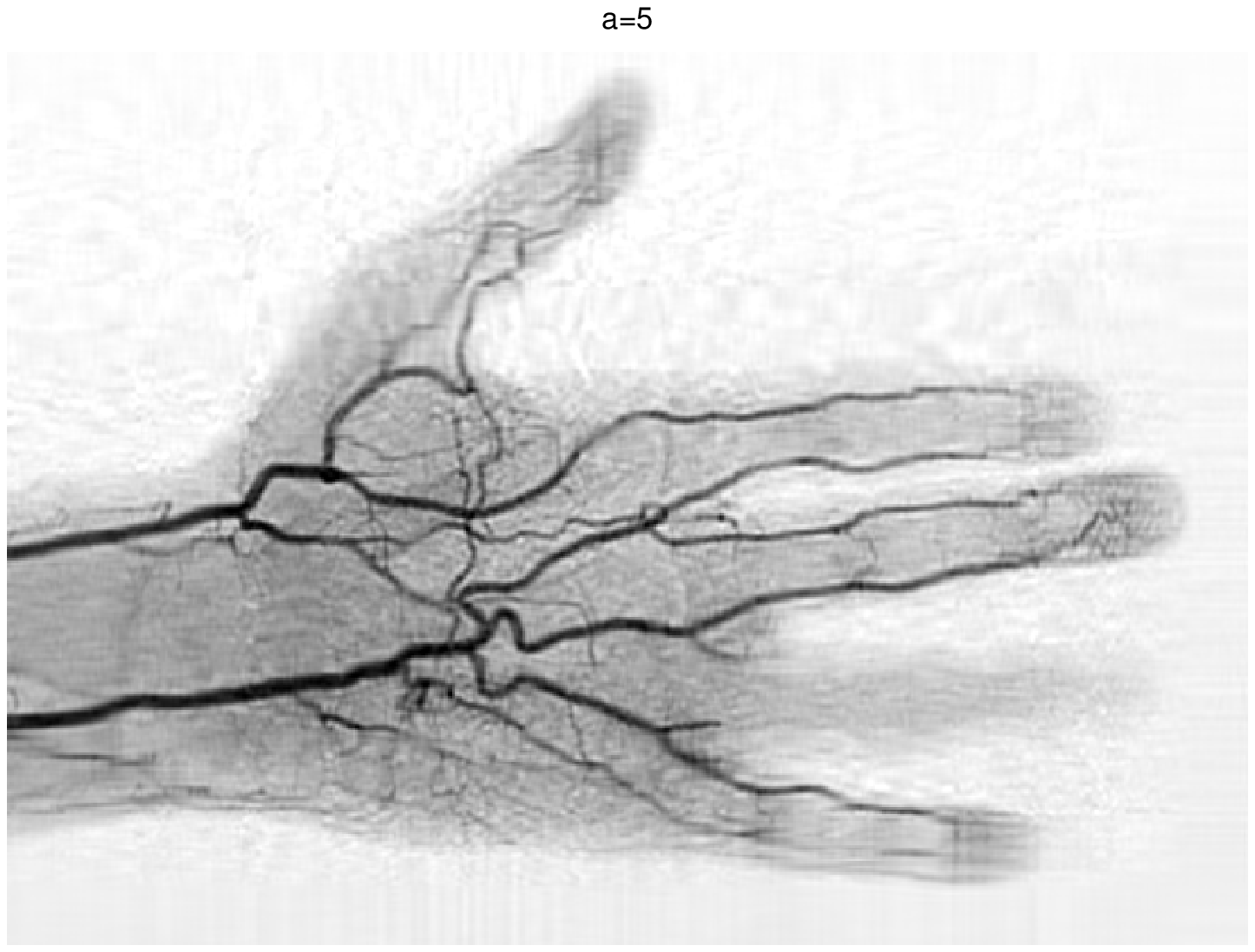}
  \end{minipage}
  \begin{minipage}[t]{0.4\linewidth}
  \centering
  \includegraphics[width=1\textwidth]{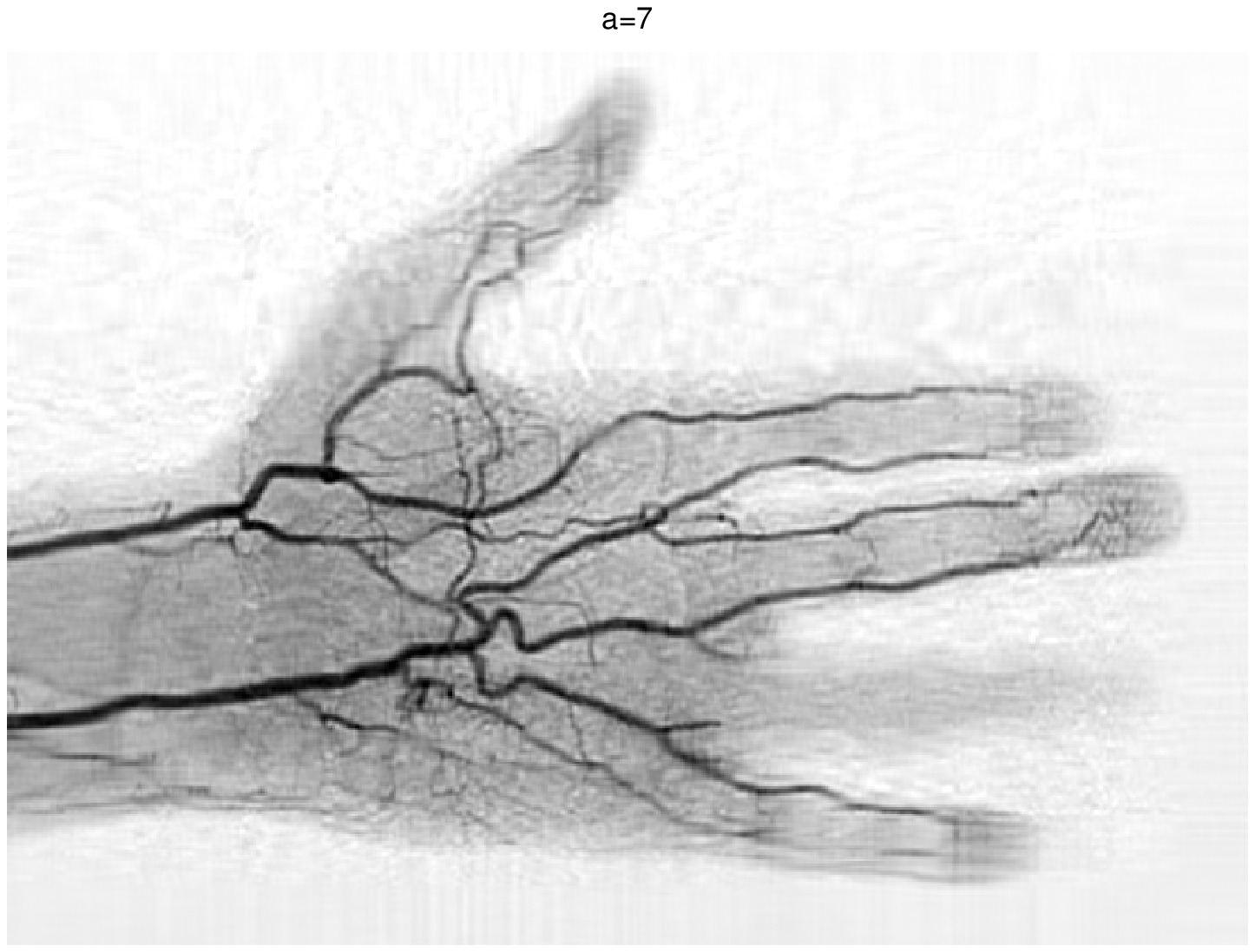}
  \end{minipage}
  \begin{minipage}[t]{0.4\linewidth}
  \centering
  \includegraphics[width=1\textwidth]{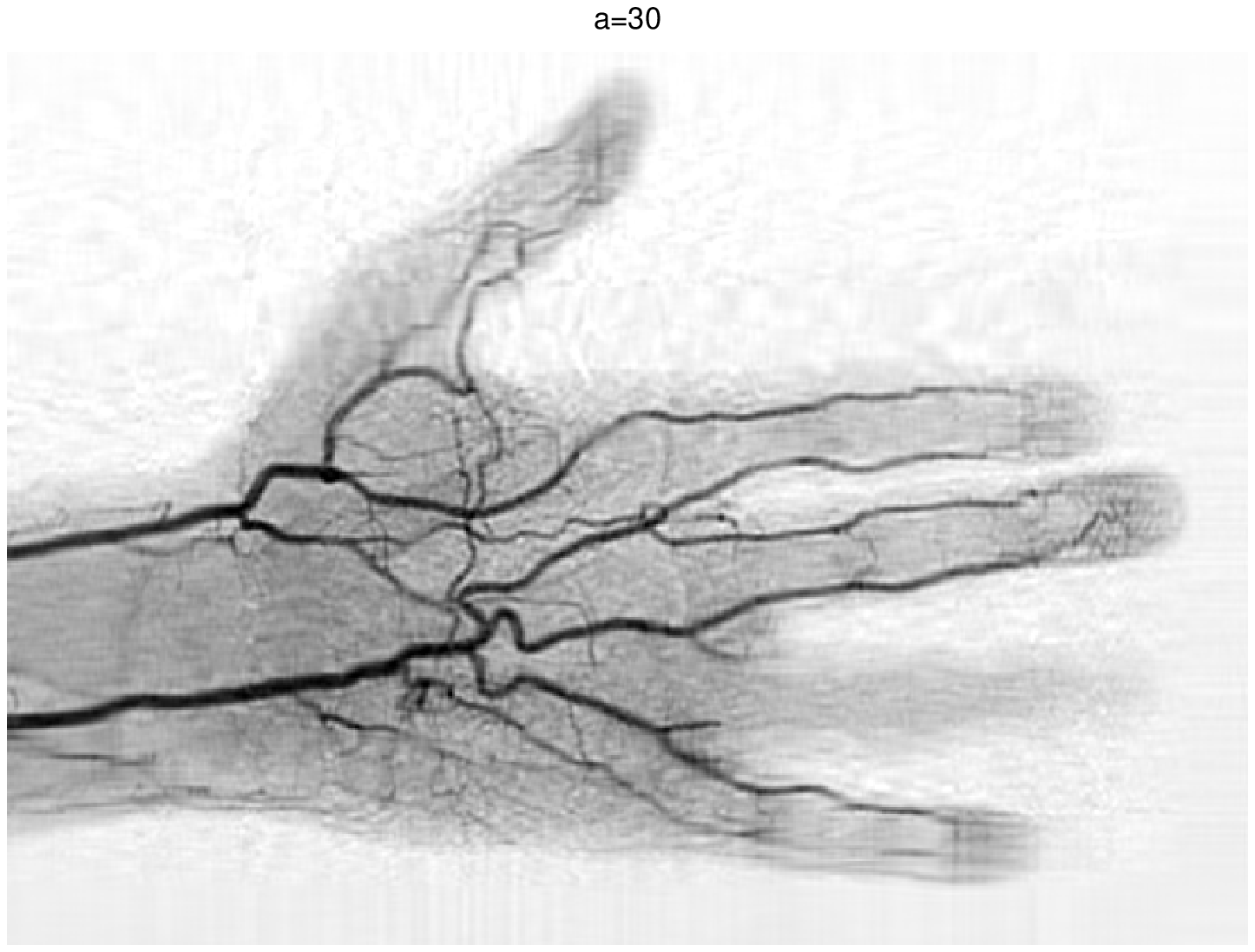}
  \end{minipage}
  \begin{minipage}[t]{0.4\linewidth}
  \centering
  \includegraphics[width=1\textwidth]{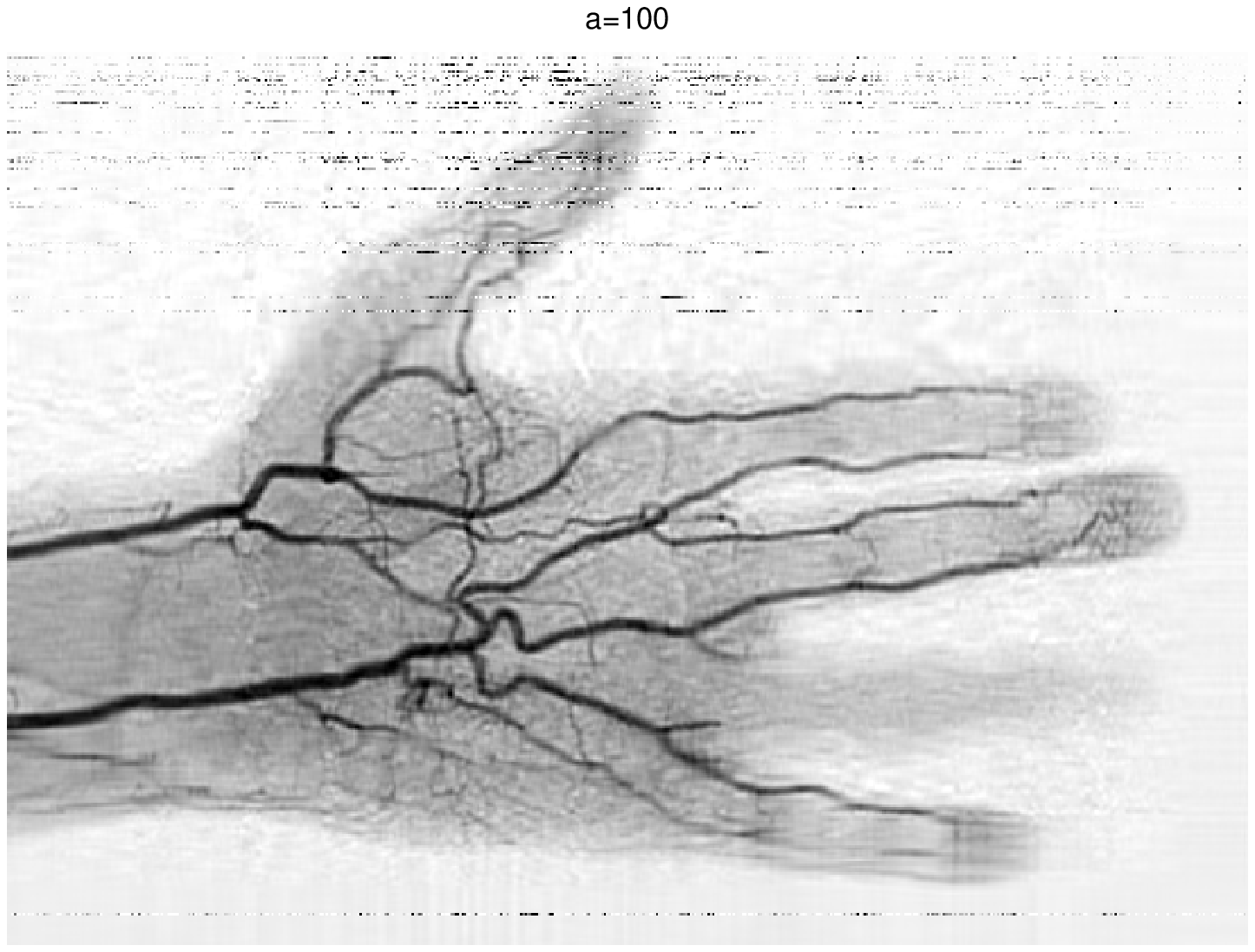}
  \end{minipage}
  \begin{minipage}[t]{0.4\linewidth}
  \centering
  \includegraphics[width=1\textwidth]{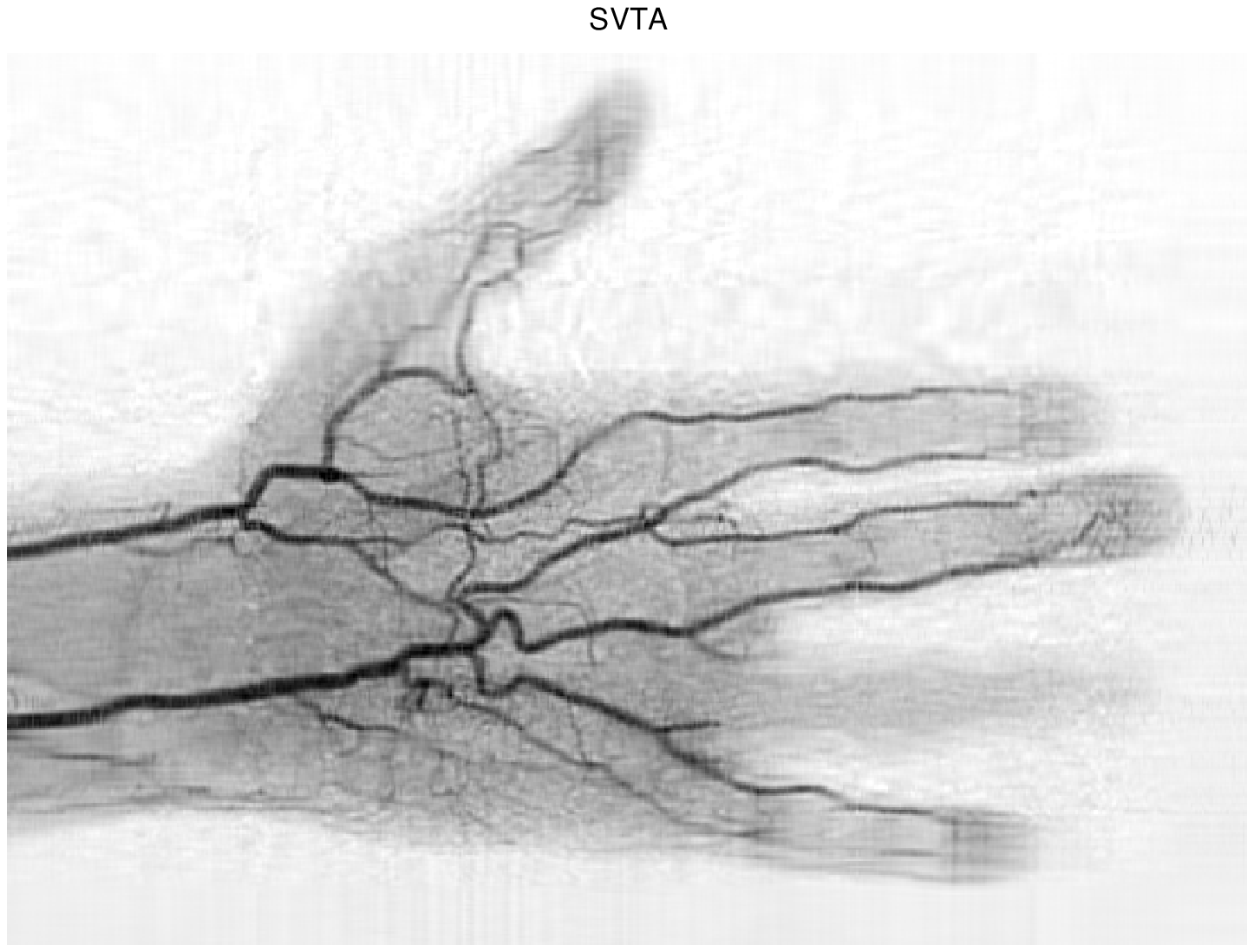}
  \end{minipage}
  \begin{minipage}[t]{0.4\linewidth}
  \centering
  \includegraphics[width=1\textwidth]{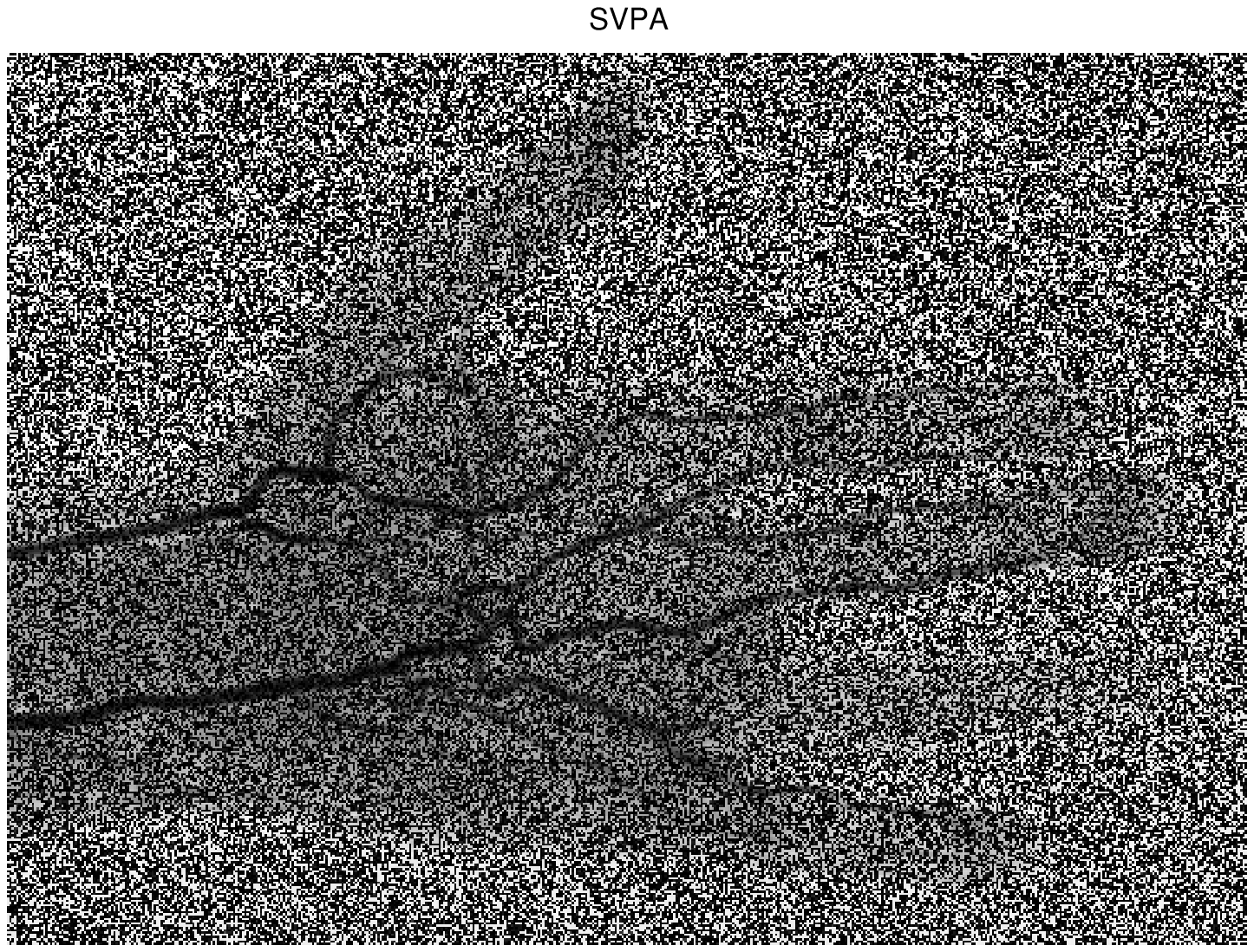}
  \end{minipage}
  \caption{Comparisons of ISVTA, SVTA and SVPA for recovering the approximated low-rank HAI with SR=0.40.} \label{figure7}
\end{figure}

\begin{figure}
  \centering
  \begin{minipage}[t]{0.4\linewidth}
  \centering
  \includegraphics[width=1\textwidth]{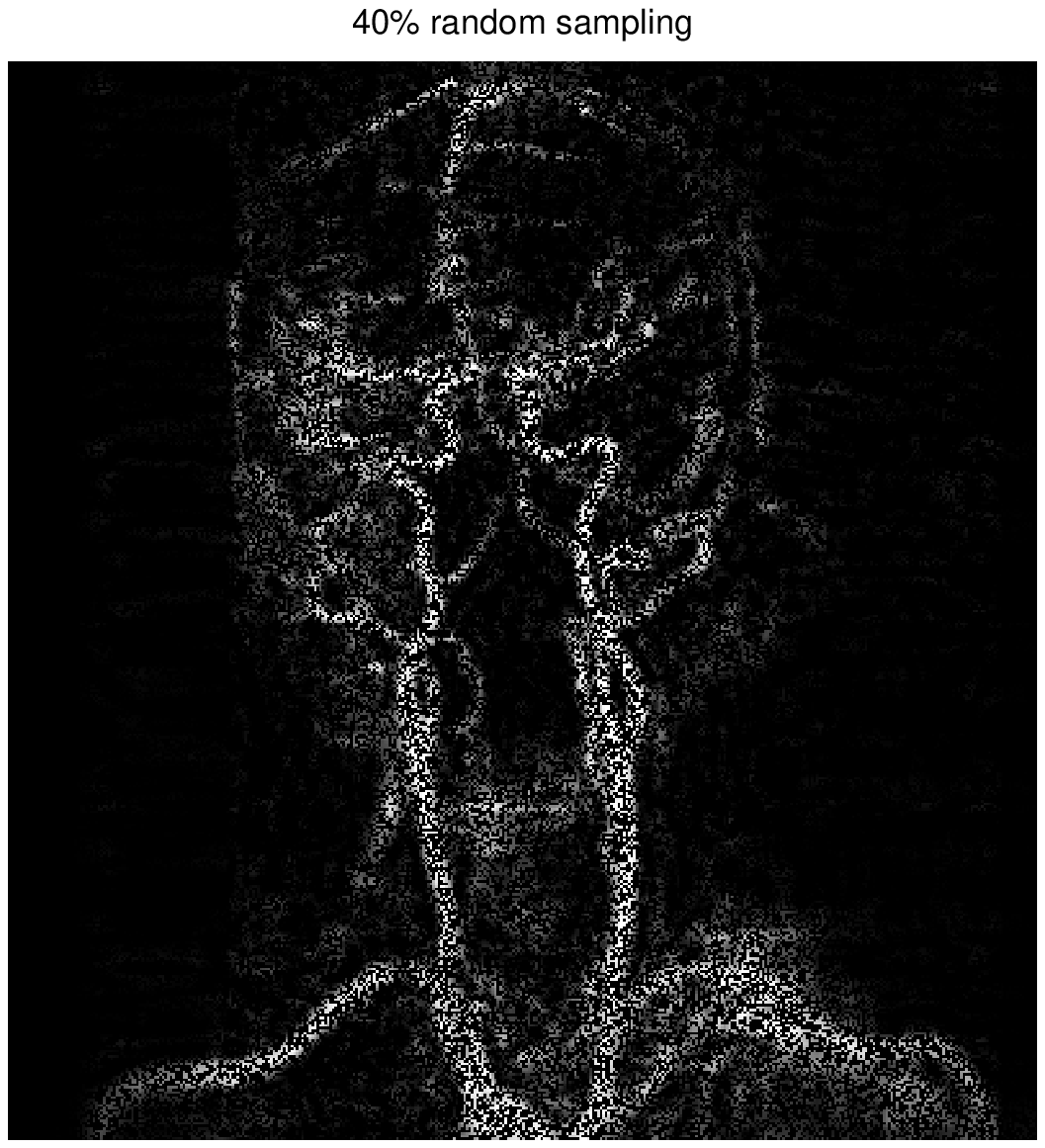}
  \end{minipage}\\
  \begin{minipage}[t]{0.4\linewidth}
  \centering
  \includegraphics[width=1\textwidth]{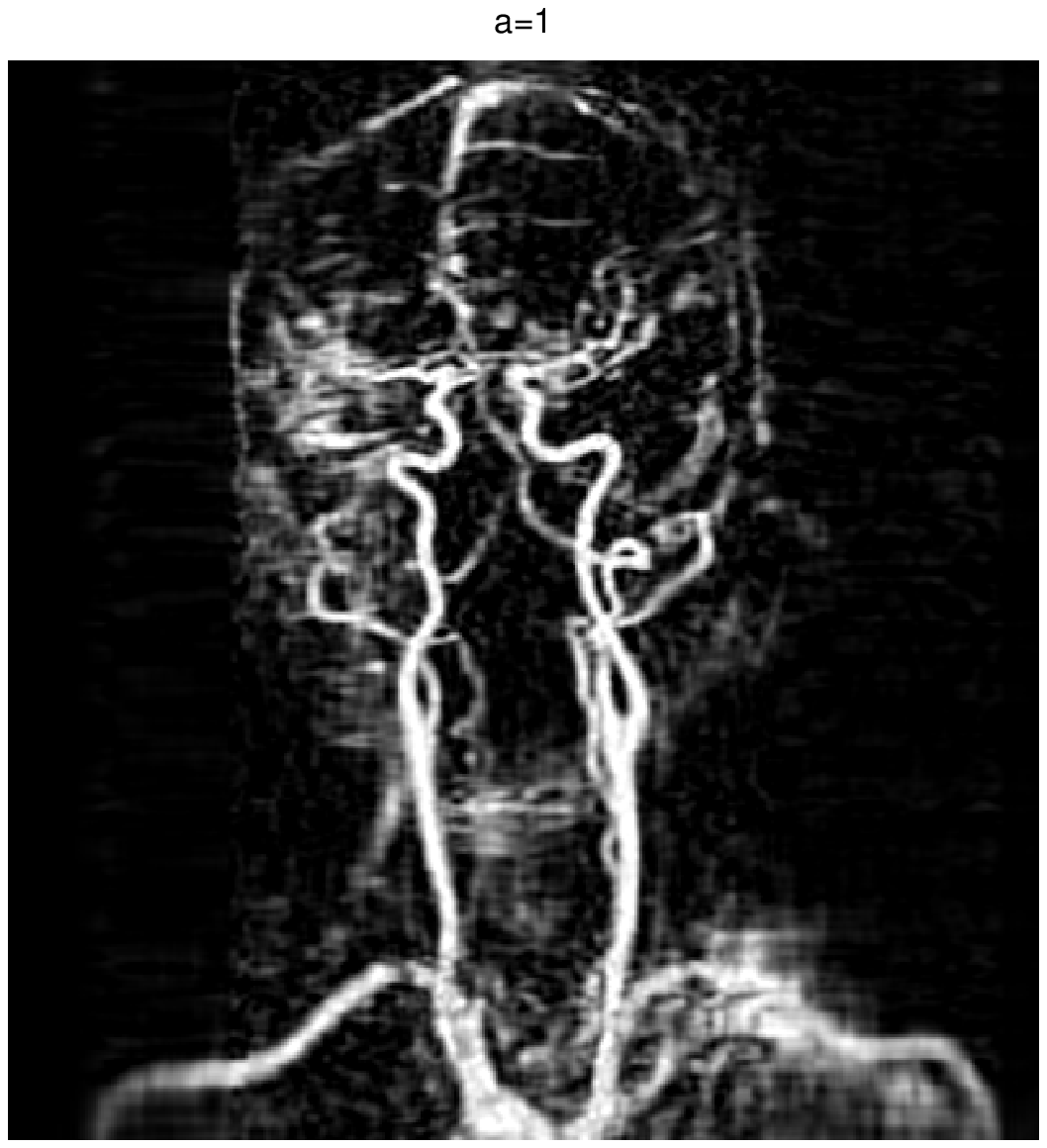}
  \end{minipage}
  \begin{minipage}[t]{0.4\linewidth}
  \centering
  \includegraphics[width=1\textwidth]{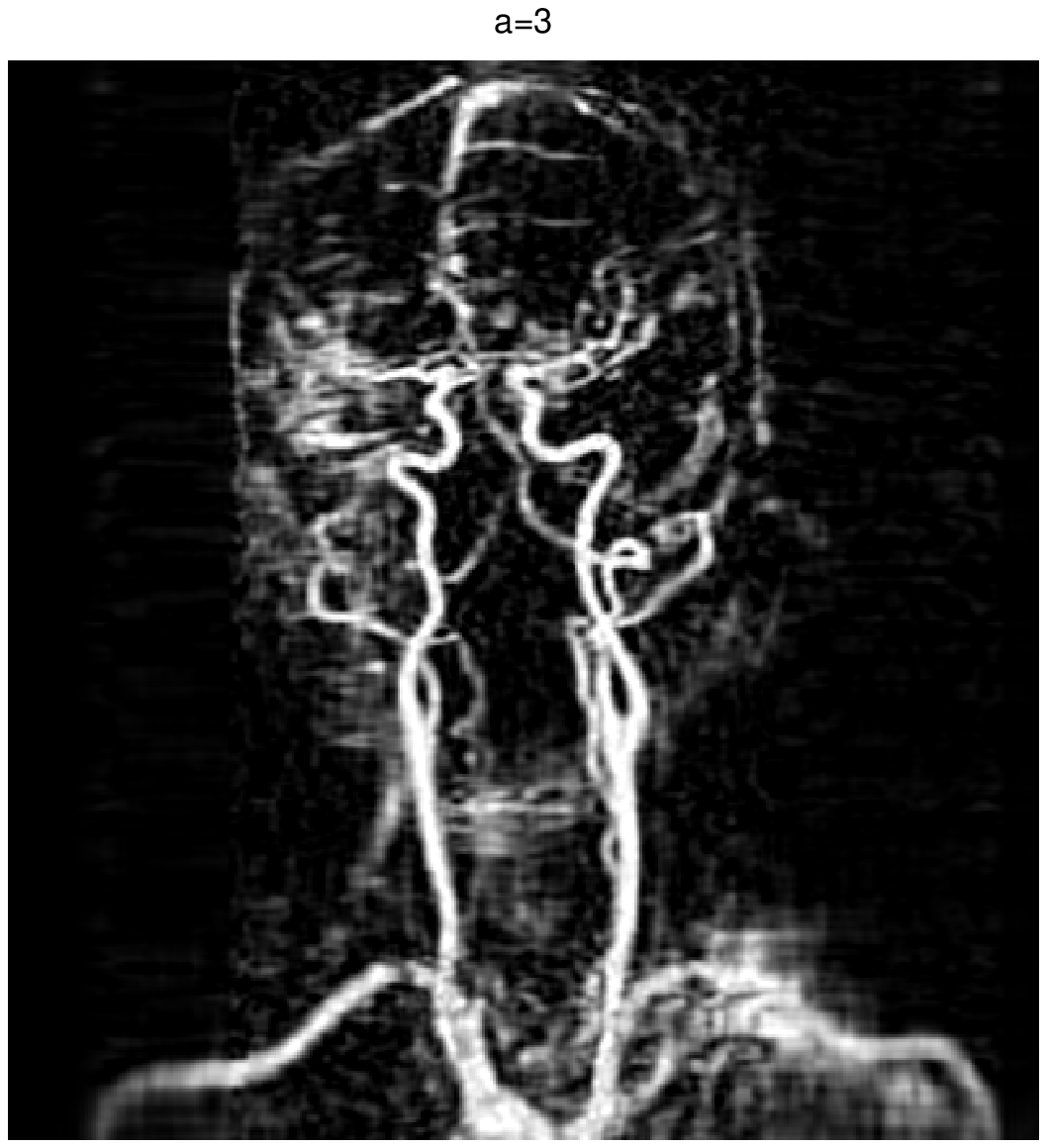}
  \end{minipage}
  \begin{minipage}[t]{0.4\linewidth}
  \centering
  \includegraphics[width=1\textwidth]{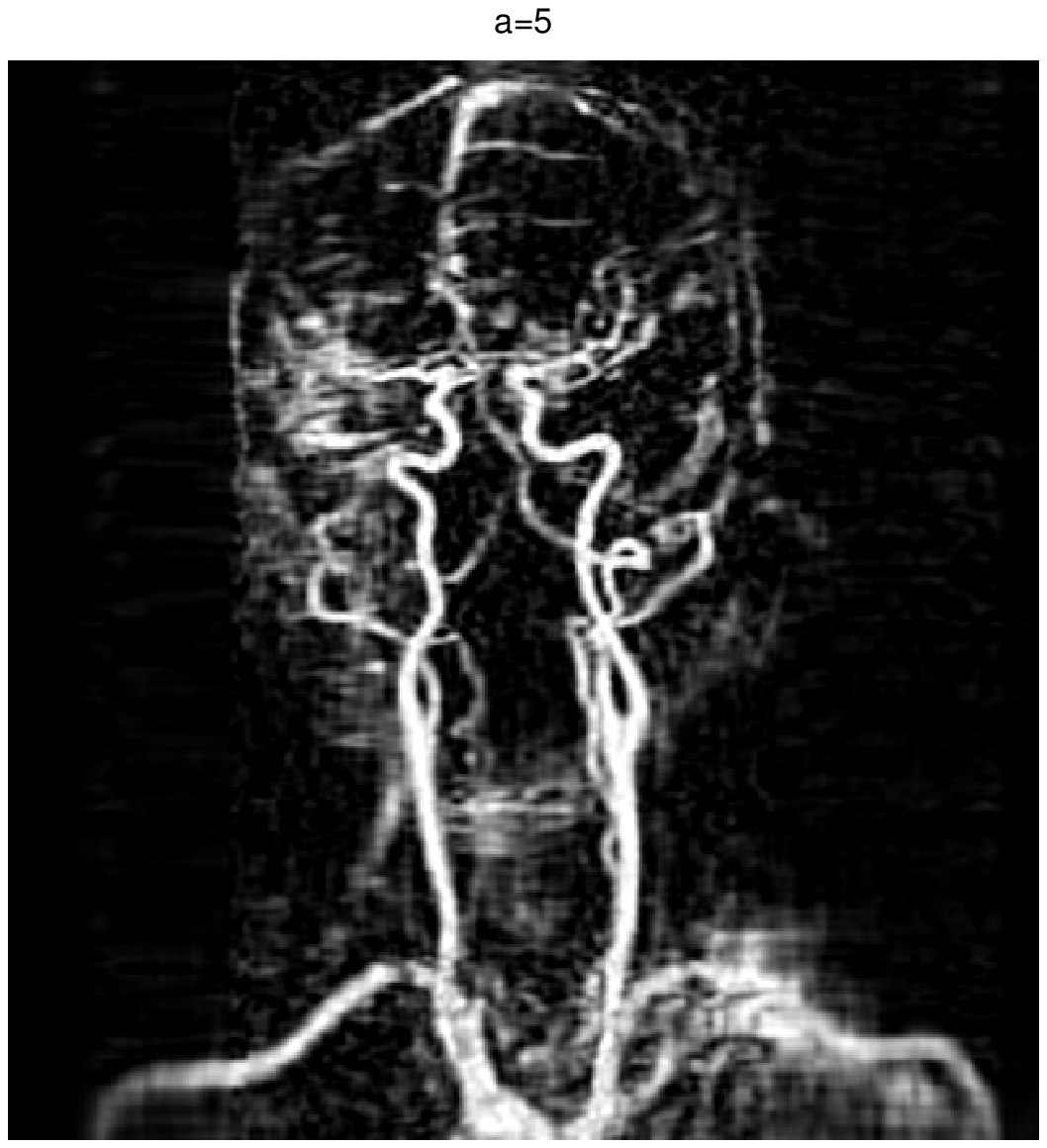}
  \end{minipage}
  \begin{minipage}[t]{0.4\linewidth}
  \centering
  \includegraphics[width=1\textwidth]{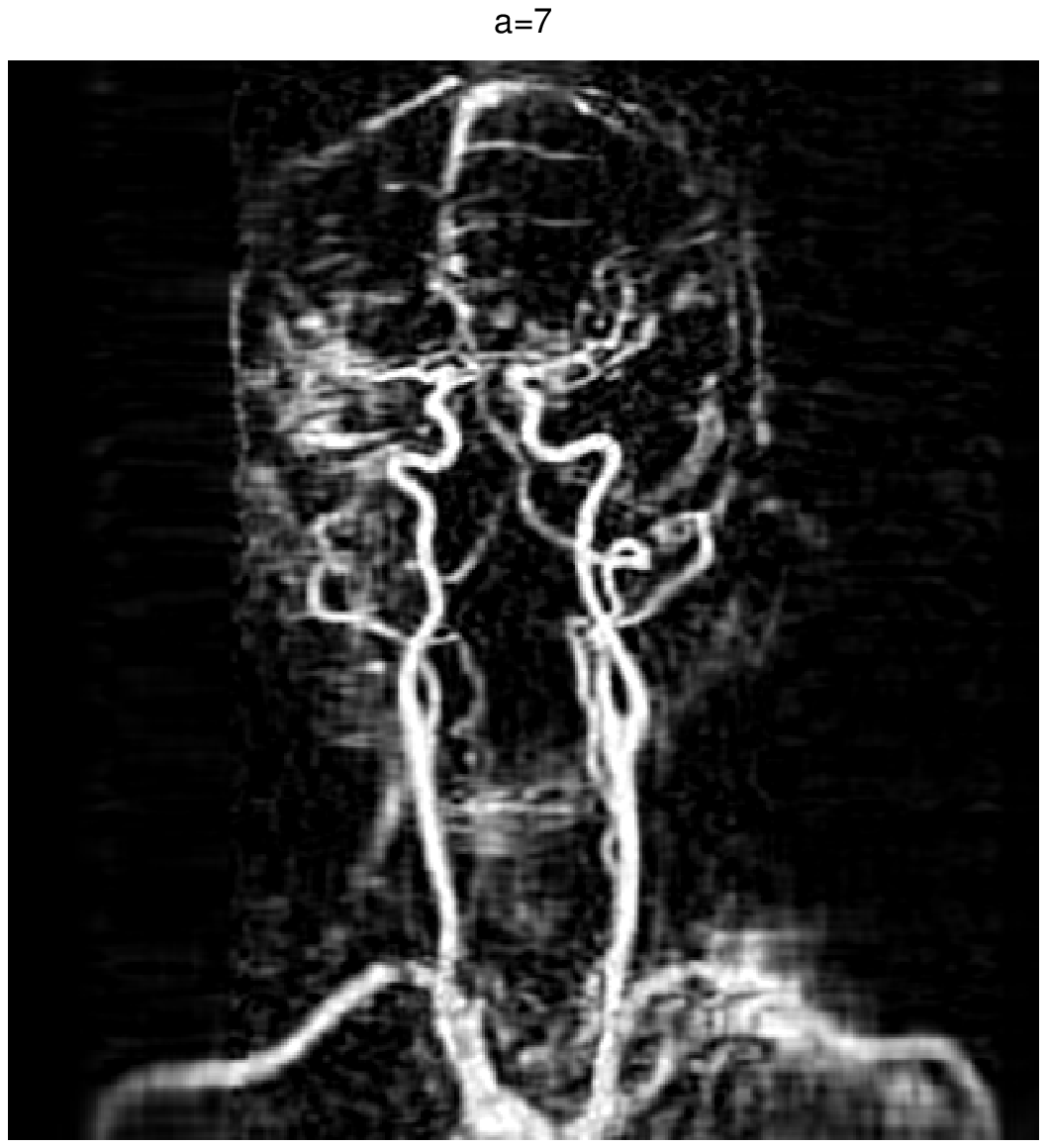}
  \end{minipage}
  \begin{minipage}[t]{0.4\linewidth}
  \centering
  \includegraphics[width=1\textwidth]{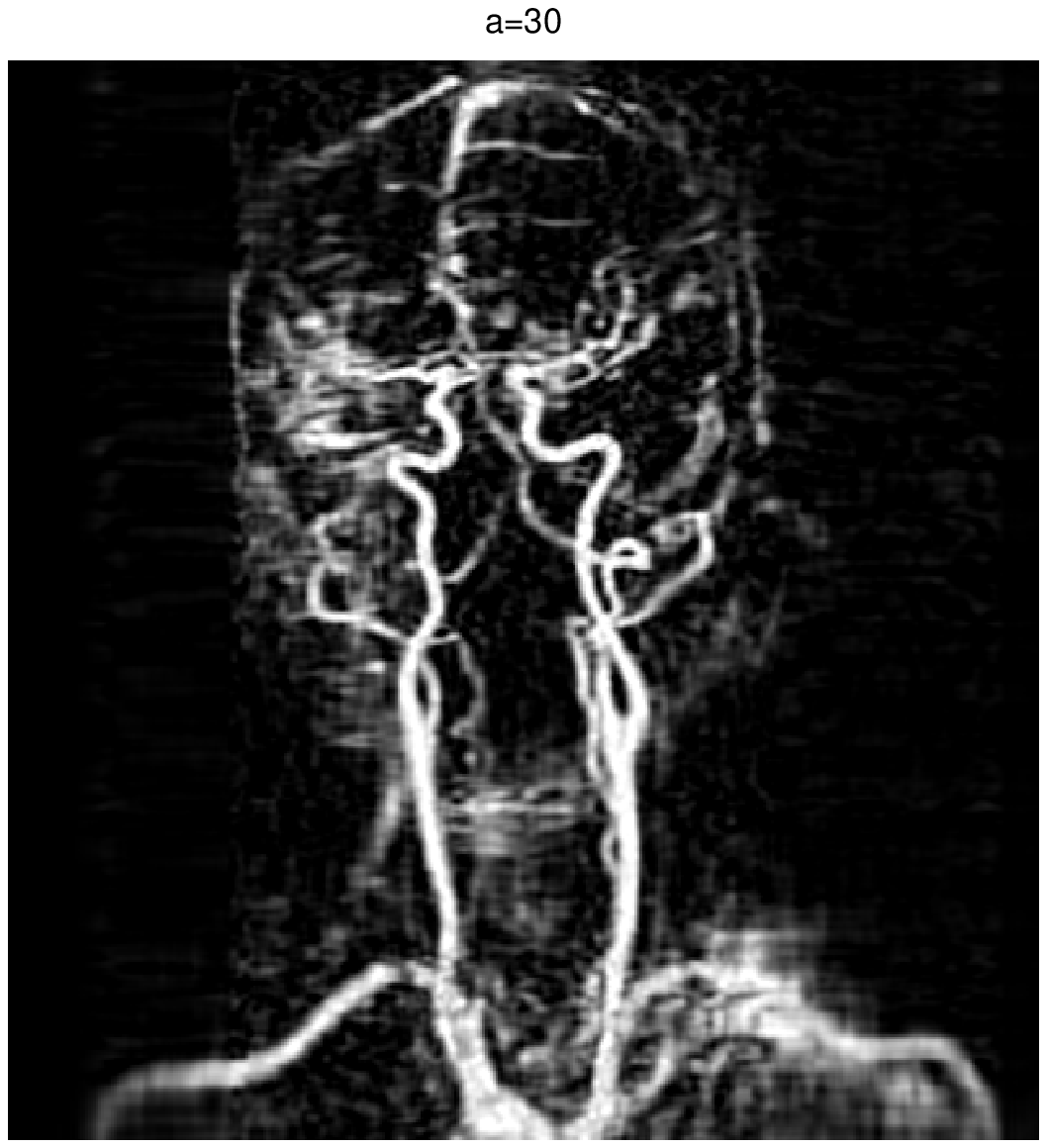}
  \end{minipage}
  \begin{minipage}[t]{0.4\linewidth}
  \centering
  \includegraphics[width=1\textwidth]{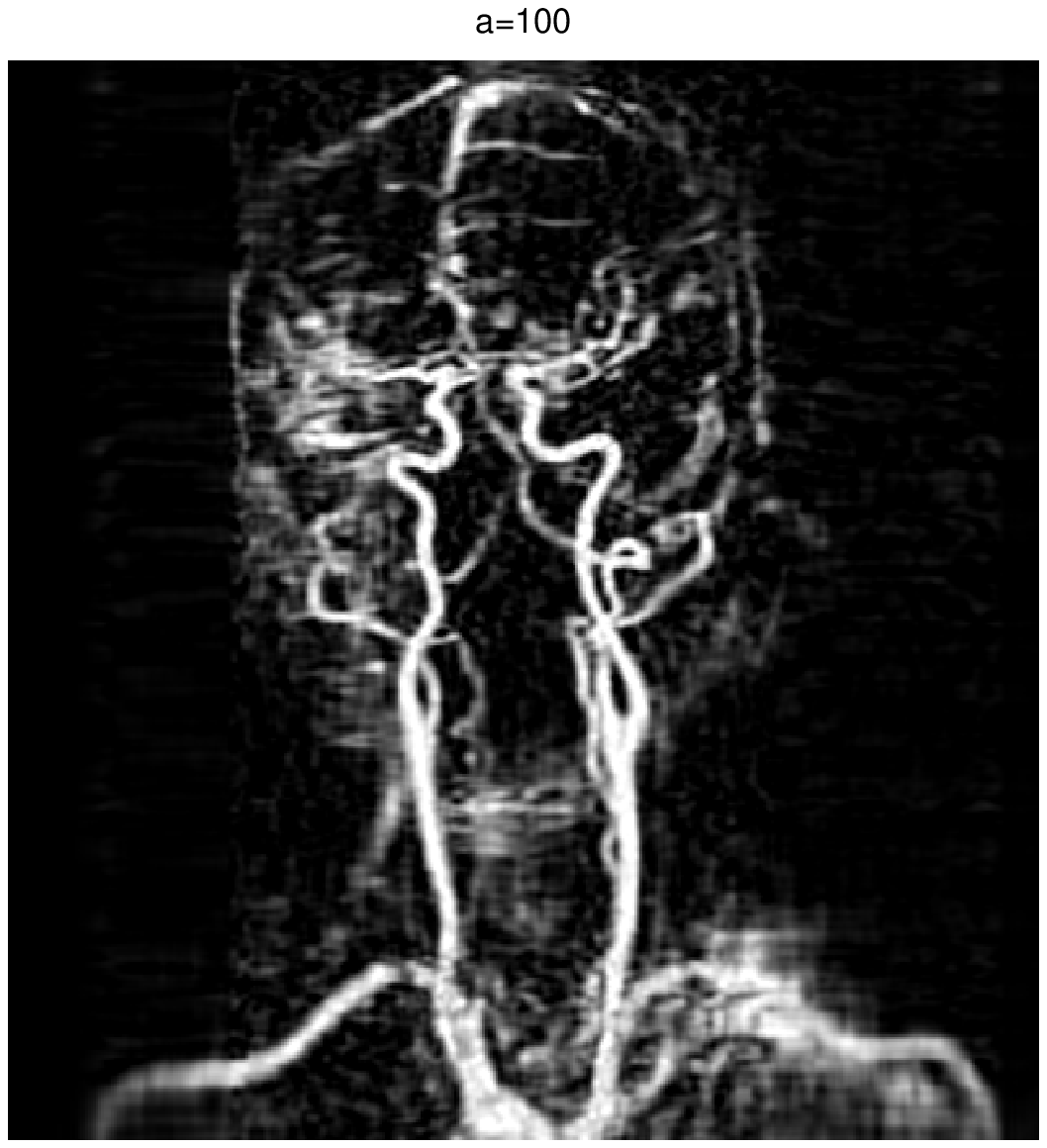}
  \end{minipage}
  \begin{minipage}[t]{0.4\linewidth}
  \centering
  \includegraphics[width=1\textwidth]{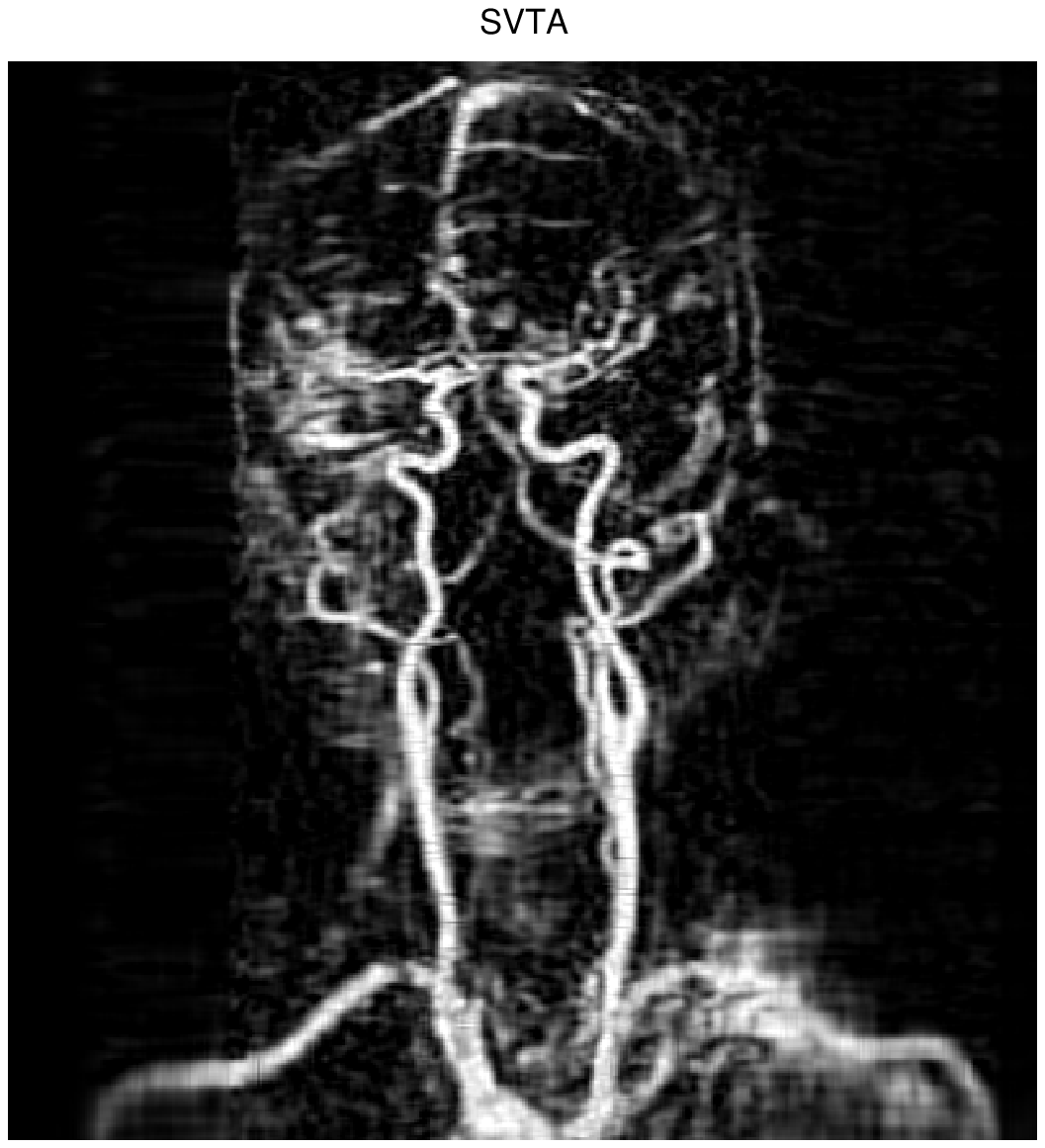}
  \end{minipage}
  \begin{minipage}[t]{0.4\linewidth}
  \centering
  \includegraphics[width=1\textwidth]{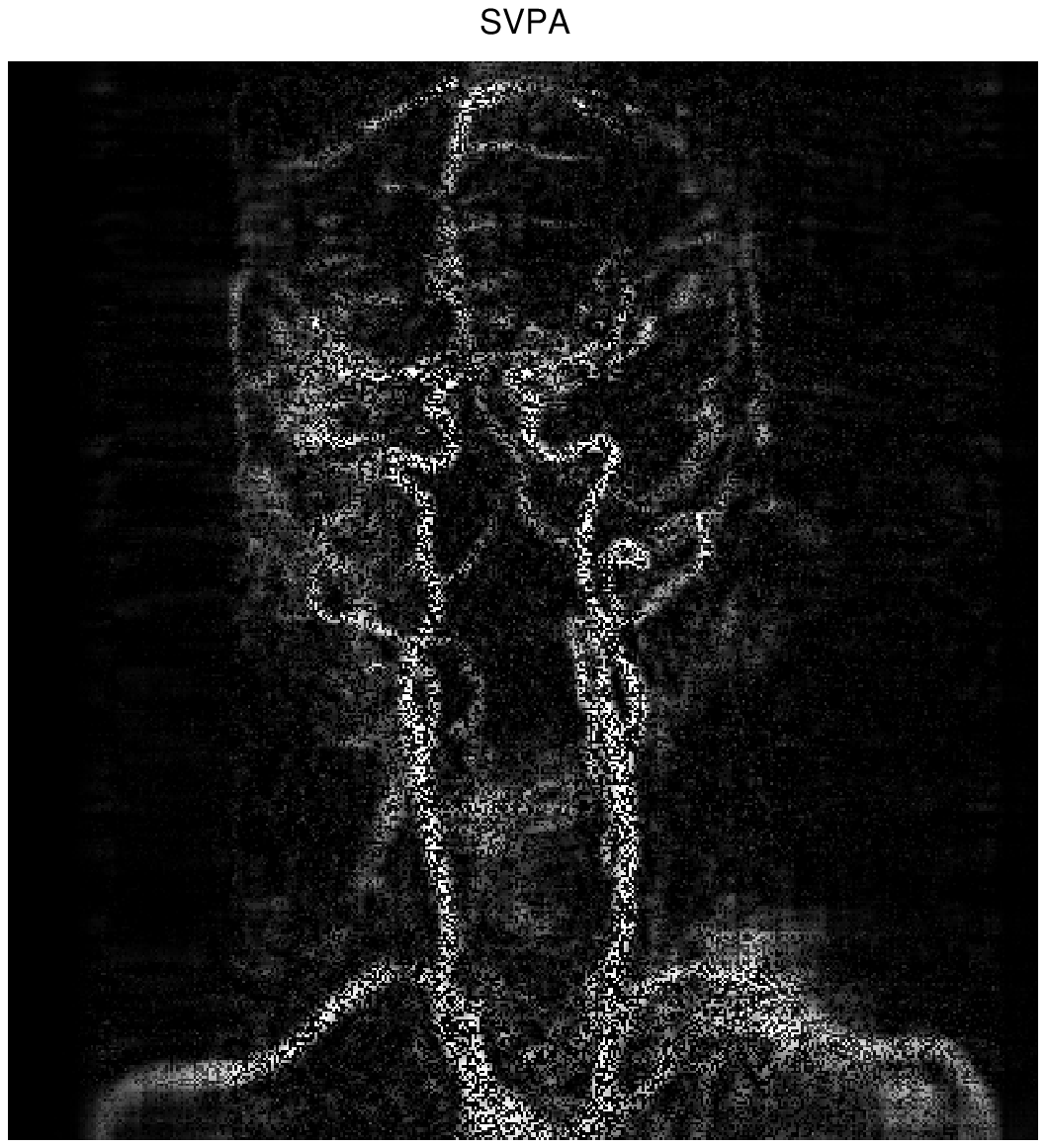}
  \end{minipage}
  \caption{Comparisons of ISVTA, SVTA and SVPA for recovering the approximated low-rank IVI with SR=0.40.} \label{figure8}
\end{figure}

\section{Conclusions}
It is well known that affine matrix rank minimization problem is combinatorial and NP-hard in general. Therefore, it is important to choose a
suitable substitution for it. In this paper, a continuous promoting low-rank non-convex fraction function is studied to replace the rank function
in this NP-hard problem, and then the NP-hard affine matrix rank minimization problem can be translated into a transformed affine matrix rank
minimization problem. Inspired by our former work in compressive sensing, the iterative singular value thresholding algorithm is proposed to
solve the regularization transformed affine matrix rank minimization problem. For different $a>0$, we can get a far more better result by adjusting the
values of the parameter $a$, which is one of the advantages for the iterative singular value thresholding algorithm compared with some state-of-art methods.
We proved that the value of the regularized parameter $\lambda>0$ can not be chosen too large. Indeed, there exists $\bar{\lambda}>0$ such that the
optimal solution of the regularization transformed affine matrix rank minimization problem is equal to zero for any $\lambda>\bar{\lambda}$. Moreover,
some convergence results are established and numerical experiments show that this thresholding algorithm is feasible for solving the regularization transformed
affine matrix rank minimization problem. Numerical experiments on completion of low-rank random matrices show that our method performs powerful in finding a
low-rank matrix and the numerical experiments for the image inpainting problems show that our algorithm have better performances than some state-of-art methods.

\begin{acknowledgements}
The work was supported by the National Natural Science Foundations of China (11131006, 11271297) and the Science
Foundations of Shaanxi Province of China (2015JM1012).
\end{acknowledgements}

% BibTeX users please use one of
%\bibliographystyle{spbasic}      % basic style, author-year citations
%\bibliographystyle{spmpsci}      % mathematics and physical sciences
%\bibliographystyle{spphys}       % APS-like style for physics
%\bibliography{}   % name your BibTeX data base

% Non-BibTeX users please use

\end{document}